\newcommand{\myboxx}[1]{\begin{tcolorbox}[colback=gray!60] {\bfseries #1} \end{tcolorbox}}
\newcommand{\myboxy}[1]{\begin{tcolorbox}[colback=gray!40] #1 \end{tcolorbox}}
\newcommand{\myboxyyy}[1]{\begin{tcolorbox}[colback=gray!20] #1 \end{tcolorbox}}
\newcommand{\myboxyyyyy}[1]{\begin{tcolorbox}[colback=gray!0] #1 \end{tcolorbox}}
\newtheorem{theorem}{Theorem}
\newtheorem{lemma}{Lemma}
\theoremstyle{definition}
\newtheoremstyle{rule}{}{}{\itshape}{}{\bfseries}{:}{.5em}{\thmnote{#3}}
\theoremstyle{rule}
\newcommand\totalNumber{\mathprobfont N}         
\newcommand\totalProportion{\mathprobfont P}         
\newcommand\totalRegret{\mathprobfont R}         
\newcommand\successes{s} 
\newcommand\failures{f} 
\newcommand\design[1]{\textsc{\sffamily\mdseries #1}}
\begin{document}

\author{Peter Jacko \\ Department of Management Science \\ Lancaster University, UK}

\title{The Finite-Horizon Two-Armed Bandit Problem \\ with Binary Responses
\\ \ \\
\Large A Multidisciplinary Survey of the History, State of the Art, and Myths
}

\date{June 18, 2019}

\maketitle

\begin{abstract}
In this paper we consider the two-armed bandit problem, which often naturally appears per se or as a subproblem in some multi-armed generalizations, and
serves as a starting point for introducing additional problem features. The consideration of binary responses is motivated by its widespread
applicability and by being one of the most studied settings. We focus on the undiscounted finite-horizon objective, which is the most relevant in many
applications. We make an attempt to unify the terminology as this is different across disciplines that have considered this problem, and present a
unified model cast in the Markov decision process framework, with subject responses modelled using the Bernoulli distribution, and the corresponding Beta
distribution for Bayesian updating. We give an extensive account of the history and state of the art of approaches from several disciplines, including
design of experiments, Bayesian decision theory, naive designs, reinforcement learning, biostatistics, and combination designs. We evaluate these
designs, together with a few newly proposed, accurately computationally (using a newly written package in Julia programming language by the author) in
order to compare their performance. We show that conclusions are different for moderate horizons (typical in practice) than for small horizons (typical
in academic literature reporting computational results). We further list and clarify a number of myths about this problem, e.g., we show that,
computationally, much larger problems can be designed to Bayes-optimality than what is commonly believed.
\\
\\\emph{Keywords:} Multi-armed bandit problem; Design of sequential experiments; Bayesian decision theory; Dynamic programming; Index rules; Response-adaptive randomization;
\end{abstract}

\newpage
\section{Introduction}

Statistical testing based on randomized equal allocation is a widespread state-of-the-art approach in the design of experiments for around $ 100 $ years,
known today as the \emph{randomized controlled trial} in biostatistics, the \emph{between-group design} in social sciences, and the \emph{A/B testing} in
digital marketing. Already \citet{Thompson1933biometrika}, a biostatistician from Yale University, proposed a data-driven approach which would in
expectation lead to a higher reward from an experiment, using the following words:
\begin{quote}
``...there can be no objection to the use of data, however meagre, as a guide to action required before more can be collected ... Indeed, the fact that
such objection can never be eliminated entirely---no matter how great the number of observations---suggested the possible value of seeking other modes of
operation than that of taking a large number of observations before analysis or any attempt to direct our course...''
\end{quote}
\citet{Robbins1952}, a prominent mathematician and statistician, emphasized that this problem is of a much wider importance:
\begin{quote}
``In fact, the problem represents in a simplified way the general question of how we learn---or should learn---from past experience.''
\end{quote}

A formulation of the problem using the Bayesian decision-theoretic framework allows for Bayes-optimality. Practical application of this Bayesian approach
has however been long hindered by its computational complexity, since the optimal solution is known in analytical form only for infinite horizon
\citep{Kelly1981}. A variety of practical approximations and heuristics have been developed and studied across several disciplines in order to overcome
this issue, but their analysis failed to give exact results or bounds sufficiently close to Bayes-optimality for finite horizon problems, which are the
problems most relevant to many situations in practice.

\subsection{Paper Structure and Contributions}

In this paper we thus focus on the finite-horizon setting. We also restrict the discussion to two arms, which often naturally appears per se or as a
subproblem in some multi-armed generalizations (e.g. if new arms appear over time), and serves as a starting point for introducing additional problem
features. The consideration of binary responses is motivated by its widespread applicability and by being one of the most studied settings.

Our main objective is to give an account of modelling and solution approaches arising in different disciplines, in a unified framework and using a unified
terminology. The problem description, origins and terminology is given in \autoref{section:problem}. Our unified model is described in
\autoref{section:model}, cast in the Markov decision process framework, with subject responses modelled using the Bernoulli distribution, and the
corresponding Beta distribution for Bayesian updating. Different problem settings, assumptions and objectives are summarized in \autoref{section:settings}.
\autoref{section:approaches} gives an account of the history and state of the art of approaches from several disciplines. In \autoref{section:performance}
we evaluate these designs, together with a few newly proposed, accurately computationally (using a newly written package in Julia programming language by
the author) in order to compare their performance, showing that conclusions are different for moderate horizons (typical in practice) than for small
horizons (typical in academic literature reporting computational results). We further list and clarify a number of myths about this problem in
\autoref{section:myths}, e.g., we show that, computationally, much larger problems can be designed to Bayes-optimality than what is commonly believed.
\autoref{section:conclusion} concludes.

\section{Problem}
\label{section:problem}

We consider the problem with two \emph{arms} (or, \emph{interventions}), called $ C $ (mnemonically for ``control'' or ``comparator'' or standard of
``care'') and $ D $ (for ``discovery'' or ``development'').\footnote{In the existing literature, it is common to denote the two arms as $ 1 , 2 $ (but we
prefer to keep these names for actions defined below) or $ A , B $ (but we prefer to keep $ A $ for the action process and $ B $ for the Beta function).} $
T $ \emph{subjects} become available one by one, and each subject must be allocated to exactly one of the arms. Upon allocation of a subject to arm $ C $
($ D $), subject's \emph{response} is observed, which is binary (success/failure), where the success probability is $ \theta_{ C } $ ($ \theta_{ D } $) and
the failure probability is $ 1 - \theta_{ C } $ ($ 1 - \theta_{ D } $). The primary objective is to find a \emph{design}, i.e. a strategy composed of
\emph{randomized actions} of allocating the subjects to arms, which, in expectation, achieves the highest number of observed successes from the $ T $
subjects, assuming that the success probabilities are unknown. A formal model is given in \autoref{section:model}.

\subsection{Problem Origins}

The first statement of the two-armed bandit problem is in \citet{Thompson1933biometrika}, extended in \citet{Thompson1935ajm} to multiple arms, in a
Bayesian setting. Apparently unaware of Thompson's works, \citet{Robbins1952} formulated the two-armed bandit problem in a frequentist setting. Neither
\citet{Thompson1933biometrika, Thompson1935ajm} nor \citet{Robbins1952} used the terms ``arm'' or ``bandit''. The term \emph{two-armed bandit} problem
first appeared in \citet{BradtEtal1956}, referring to the setting with binary responses in which one knows the set $ \{ \theta_{ C } , \theta_{ D } \} $,
but does not know which arm is which. \citet{BradtEtal1956} also proposed a generalization of that problem, in which $ \{ \theta_{ C } , \theta_{ D } \} $
is unknown, which is the two-armed bandit problem as known today and as considered in this paper. \citet{Bellman1956} referred to the latter problem as the
\emph{two-machine problem}.

P. Whittle stated on several occasions that researchers were aware of this type of problem since the 1940s and considered it an important but very hard
open problem.\footnote{``...it was formulated during the war, and efforts to solve it so sapped the energies and minds of Allied analysts that the
suggestion was made that the problem be dropped over Germany, as the ultimate instrument of intellectual sabotage'' \citet{Whittle1979discussion};
``...propounded during the Second World War, and soon recognized as so difficult that it quickly became a classic, and a by-word for intransigence.''
\citet{Whittle1989foreword}; ``...had resisted analysis, however, to the point of being regarded by some as intrinsically insoluble.''
\citet{Whittle2002}.} That could be attributed to the absence of a suitable mathematical framework and theory, as the progress on the problem occurred
slowly alongside the emergence and the development of areas such as sequential analysis, Bayesian statistics, decision theory, dynamic programming,
stochastic processes, and concentration inequalities.

Indeed, early papers describing theoretical solutions on the bandit problem were often among the pioneers in these areas, introducing novel terminology and
notation, not all of which has been adopted more generally, and might thus be hard to read for today's researchers. Drawing on the early research in 1950s
and 1960s, three dominant ``schools'' have emerged:
\begin{itemize}

\item the \emph{Berry's school}: starting with \citet{Berry1972}, and rewritten and further developed in \citet{BerryFristedt1985}, focussing
    predominantly on the finite-horizon setting;

\item the \emph{Gittins' school}: starting with \citet{GittinsJones1974}, and rewritten and further developed in \citet{Gittins1979, Gittins1989,
    GittinsEtal2011}, focussing predominantly on the discounted infinite-horizon setting;

\item the \emph{Robbins' school}: starting with \citet{LaiRobbins1985aam}, and rewritten and further developed in \citet{Agrawal1995aap,
    KatehakisRobbins1995pnas, BurnetasKatehakis1996}, focussing predominantly on the time-average infinite-horizon setting.

\end{itemize}
The author's suggestion is that this pioneering literature should be on the must-read list of researchers on bandit problems, regardless of their
discipline.

While the Robbins' school makes complete learning (i.e. identification of the better arm in infinite time almost surely) lexicographically more important
than the way of attaining it, both the Berry's and Gittins' schools replace the lexicographic ordering by resolving the trade-off between complete
\emph{learning and earning} (of rewards), with the relative weights implicitly given by the horizon and the discount factor, respectively. In
\autoref{section:approaches} we describe these ``schools'' and their relationships in more detail.

Of course, there are several other fascinating variants of the bandit problem, with different objective (e.g., risk-averse, adversarial, final-period-only,
etc.), different control (e.g., randomized, multi-mode, multi-resource, duelling, multi-player, etc.), and/or different dynamics (e.g., non-binary
responses, delayed responses, partial observability, arriving arms, covariates, correlation, restlessness, non-stationarity, non-Markovian, etc.); all
these are unfortunately beyond the scope of this paper.

\subsection{Applications}

The problem has been formulated, addressed or applied in a number of disciplines, each developing its own terminology, see \autoref{table:terminology}. In
this paper we use the terminology which we believe is a reasonable compromise and should not cause confusion for researches and practitioners from all the
disciplines. The author wishes to encourage researchers from all the disciplines to follow this terminology as closely as possible to facilitate for
researchers and practitioners from other disciplines to learn about their work.

According to \citet{Scott2010asmbi}: ``Multi-armed bandits have an important role to play in modern production systems that emphasize \emph{continuous
improvement}, where products remain in a perpetual state of feature testing even after they have been launched.'' The most commonly listed applications,
often requiring to adapt the generic multi-armed bandit problem to specific features, are as follows:
\begin{itemize}

  \item \emph{Digital marketing}: In the digital world it is relatively easy to introduce and quick to get feedback on new document variants, and so
      bandit problems have been proposed for social media advertising, personalized websites and user interfaces, email campaigns, influence
      maximization, etc; see, e.g., \citet{LiberaliEtal2017handbook}. Bandit problems can also be used to address the problem of dynamic pricing with
      demand uncertainty, which requires to solve a trade-off of learning (of the demand curve) and earning (the highest revenue); for a survey, see,
      e.g., \citet{denBoer2015sorms}.

  \item \emph{Clinical trials}: \citet{Thompson1933biometrika} pointed out that his bandit problem ``...would be important in cases where either the rate
      of accumulation of data is slow or the individuals treated are valuable, or both.'' \citet{Gluss1962iac} further explained the motivation primarily
      by \emph{rare diseases}. Following the focus on rare and/or life-threatening diseases, a few novel bandit-based designs have been developed and
      proposed recently, and are being implemented in a growing number of trials, mainly in several types of cancer, where patients are stratified into
      smaller groups using genetic biomarkers. Discussions about the advantages and disadvantages of bandit-based designs are ongoing, e.g.,
      \citet{BerryEsserman2016nejm} argue that, in certain clinical trials, data-driven approaches make great sense ethically, statistically,
      economically, scientifically, and logistically. For a survey on real adaptive trials, see e.g., \citet{BothwellEtal2018bmj}.

  \item \emph{Search}: Bandit designs have been proposed for recommender systems in which new items and users appear frequently in order to assure
      sufficient exploration, see, e.g., \citet{Aggarwal2016book}. Although digital search is typically considered by recommender systems, many
      non-digital search applications exist, e.g. search for natural resources, search and rescue, surveillance and monitoring. Related to this category
      are also the so-called best-arm identification problem and problems appearing in ranking and selection.

\end{itemize}

\begin{table}[tbp]
\setlength{\tabcolsep}{5pt}
\centering
\begin{tabular}{lcccc}
	\toprule
	Anecdotic & strategy & choice & pull & arms \\
    \midrule
	Operations \& Management & policy & allocation & resource & projects \\
	Reinforcement learning & algorithm & decision & time step & actions \\
	Biometrics \& Biostatistics & design & randomization & patient & treatments \\
	Ranking \& selection & policy & spread over & measurement & alternatives \\
	Economics & strategy & choice & resource & experiments \\
	Computing \& Telecom. & scheduler & allocation & server & jobs \\
	Marketing & policy & allocation & impression & advertisements \\
	Transportation & driver & selection & vehicle & roads \\
    \midrule
	This paper & \emph{design} & \emph{randomized action} & \emph{subject} & \emph{interventions/arms} \\
    \bottomrule
\end{tabular}
\caption{An illustration of typical terminology for bandit problems across disciplines}\label{table:terminology}
\end{table}

\section{Model}
\label{section:model}

In this section we formulate a general two-armed problem with binary responses as a Markov decision process, which provides sufficient generality to
accommodate all the solution approaches discussed in \autoref{section:approaches}.

\paragraph{Interventions.}
We consider arms (or, \emph{interventions}) labelled by $ k \in \setK := \{ C , D \} $. A subject must be allocated to exactly one intervention, and such
allocation yields a \emph{binary} response from that intervention: $ 0 $ (failure) or $ 1 $ (success). The response set is denoted by $ \setO := \{ 0 , 1
\} $. Subject responses are uncertain, i.e., modelled as Bernoulli-distributed with parameter $ 0 \le \theta_{ k } \le 1 $, the \emph{success probability},
independent across arms. The responses are \emph{immediate}, meaning that the response of an allocated subject is observed before the next decision needs
to be done.

\paragraph{Timing.}
Subjects arrive (i.e., are recruited) sequentially (i.e., one by one) at random moments in continuous time. Since we do not discount the future, we can
without loss of generality focus only on the moments of subjects' arrivals, which we call discrete \emph{time epochs} and see as regularly spaced. That is,
equivalently, we can consider that subjects arrive at time epochs $ t \in \setT := \{ 0 , 1 , 2 , \dots , T - 1 \} $, where $ T \le +\infty $ is the number
of subjects in the trial, i.e., the \emph{trial size}, or the \emph{time horizon}. To clarify, the $ ( t + 1 ) $-st subject arrives at time epoch $ t $.
Note that $ t = T $ is the time epoch denoting the end of the trial, when the response of the last subject is observed and no subject arrives.

\paragraph{States.}
At any moment in continuous time, the \emph{physical state} is represented by the numbers of observed successes and failures on each arm, the number of
allocated subjects without an observed response on each arm, the number of arrived subjects without being allocated, and the number of remaining subjects
to arrive. This is a vector with $ 8 $ elements summing up to $ T $ at any moment (all are non-negative integers). At time epochs, this can be simplified
without loss of generality to a vector with $ 5 $ elements, with the numbers of observed successes and failures on arm $ C $ denoted by $ s_{ C } $ and $
f_{ C } $, respectively, the numbers of observed successes and failures on arm $ D $ denoted by $ s_{ D } $ and $ f_{ D } $, respectively, and the number
of remaining subjects to be allocated (exactly one of which has arrived), $ n $. Since at time epochs $ s_{ C } + f_{ C } + s_{ D } + f_{ D } + n = T $, it
is sufficient to keep track of any four of these five numbers, leading to a state as vector with $ 4 $ elements, which we choose to be $ \vecx := ( s_{ C }
, f_{ C } , s_{ D } , f_{ D } ) $. Note that at time epoch $ t $, $ s_{ C } + f_{ C } + s_{ D } + f_{ D } = t $.

In addition to the physical state, there is an \emph{information state}, which at any moment in continuous time captures all the information that could
possibly affect the decisions. This may include real-world evidence and/or modelling assumptions. The real world evidence may be available before the start
and/or it can arrive anytime during the trial. The modelling assumptions typically refer to the parameters of the prior distributions (built on historical
data or expert opinions) for the success probability of each arm (whose weight may change over time, and can be either informative or non-informative), but
may also include other parameters such as the probability of dropouts, the probability of errors in recording the observations and/or the subject
allocations, the probability of mistakes in the statistical analysis and/or in the administration process, the timing of planned interim analyses, the
probability and/or timing of unplanned stopping of the trial due to safety concerns, the estimate of the size of the subject population after the end of
the trial, etc. For full generality, we consider the information state $ \veci $ (potentially dependent on the current physical state $ \vecx $ and/or
otherwise changing during the trial), and thus the state is $ ( \vecx , \veci ) $.

\paragraph{Actions.}
At every time epoch $ t \in \setT $ the design must prescribe how the arrived subject should be randomized (i.e., randomly allocated) to interventions.
While there are only two possible allocations, in every state we consider a possibly infinite \emph{action set} $ \setA_{ ( \vecx , \veci ) } $ of
randomized actions $ a $ identified by probabilities $ ( p_{ C }^{ a } , p_{ D }^{ a } ) $, meaning that the subject is allocated to intervention $ C $ ($
B $) with probability $ p_{ C }^{ a } $ ($ p_{ D }^{ a } $). Formally, $ \setA_{ ( \vecx , \veci ) } \subseteq \{ a : p_{ C }^{ a } \ge 0 , p_{ D }^{ a }
\ge 0 , p_{ C }^{ a } + p_{ D }^{ a } = 1 ) \}. $ Since from the theory of Markov decision processes it follows that an action which is a randomized
combination of other two actions is optimal only if all three are optimal, it is sufficient to consider only an action set of two \emph{pure randomized
actions}, which we call action $ 1 $ (identified by $ ( p_{ C }^{ 1 } , p_{ D }^{ 1 } ) $) and action $ 2 $ (identified by $ ( p_{ C }^{ 2 } , p_{ D }^{ 2
} ) $). For convenience in situations when both actions are optimal, we also consider an \emph{equally-weighted mixed randomized action}, which we call
action $ 3 $ (identified by $ ( p_{ C }^{ 3 } , p_{ D }^{ 3 } ) := ( ( p_{ C }^{ 1 } + p_{ C }^{ 2 } ) / 2 , ( p_{ D }^{ 1 } + p_{ D }^{ 2 } ) / 2 ) $),
which is a combination of the two pure randomized actions with equal weights. Formally, $ \setA_{ ( \vecx , \veci ) } = \{ 1 , 2 , 3 \} $, and without loss
of generality we assume $ p_{ C }^{ 1 } \ge p_{ C }^{ 2 } $. In some approaches discussed in \autoref{section:approaches}, there is no choice of actions,
meaning that the cardinality of $ \setA_{ ( \vecx , \veci ) } $ is one, which can be obtained by setting $ 1 \equiv 2 \equiv 3 $, effectively reducing the
Markov decision process to a Markov reward process.

In some approaches, the action set depends on the observations only via their sum $ t = s_{ C } + f_{ C } + s_{ D } + f_{ D } $, thus can be written as $
\setA ( t ) $. Finally, the simplest case is the one in which the action set is constant, which we write as $ \setA $.

\paragraph{Transition Probabilities.}
Denote by $ q_{ k , ( \vecx , \veci ) , o } $ the probability of observing response $ o \in \setO $ for the current subject if it is allocated to arm $ k
\in \setK $ in state $ ( \vecx , \veci ) $. We assume that $ \sum_{ o \in \setO } q_{ k , ( \vecx , \veci ) , o } = 1 $ for all $ k $, but this can be
relaxed in some models, e.g. if allowing for dropouts (i.e., missing responses).

If the information state $ \veci $ does not change during the trial, then the transition probabilities of moving from state $ ( \vecx , \veci ) $ to state
$ ( \vecx^{ \prime } , \veci ) $ under action $ a $ are
\begin{align*}
h_{ ( \vecx , \veci ) , ( \vecx^{ \prime } , \veci ) }^{ a } =
\begin{cases}
p_{ C }^{ a } q_{ C , ( \vecx , \veci ) , 1 } & \text{ if } \vecx^{ \prime } = \vecx + \vece_{ 1 } \\
p_{ C }^{ a } q_{ C , ( \vecx , \veci ) , 0 } & \text{ if } \vecx^{ \prime } = \vecx + \vece_{ 2 } \\
p_{ D }^{ a } q_{ D , ( \vecx , \veci ) , 1 } & \text{ if } \vecx^{ \prime } = \vecx + \vece_{ 3 } \\
p_{ D }^{ a } q_{ D , ( \vecx , \veci ) , 0 } & \text{ if } \vecx^{ \prime } = \vecx + \vece_{ 4 } \\
\end{cases}
\end{align*}
where $ \vece_{ j } $ is the standard basis vector.
If the information state changes during the trial, then these transition probabilities need to be amended to reflect its dynamics.

\paragraph{Expected One-Period Rewards.}
The expected one-period reward $ r_{ ( \vecx , \veci ) }^{ a } $ for all states $ ( \vecx , \veci ) $ and all actions $ a $ needs to be defined. If the
information state $ \veci $ does not change during the trial, then it is as follows: $ r_{ ( \vecx , \veci ) }^{ a } = 0 $ for all states such that $ s_{ C
} + f_{ C } + s_{ D } + f_{ D } \le T - 1 $ and $ r_{ ( \vecx , \veci ) }^{ a } = s_{ C } + s_{ D } $ for all states such that $ s_{ C } + f_{ C } + s_{ D
} + f_{ D } = T $ (i.e., the reward is the number of observed successes in all states in which the trial can eventually end).

The above definition of the reward is novel. The conventional one is to set the reward to the expected value of observing one success in a given state
under a given action, i.e., $ r_{ ( \vecx , \veci ) }^{ a } = p_{ C }^{ a } q_{ C , ( \vecx , \veci ) , 1 } + p_{ D }^{ a } q_{ D , ( \vecx , \veci ) , 1 }
$ at all time epochs $ t = 0 , 1 , \dots , T - 1 $ and $ r_{ ( \vecx , \veci ) }^{ a } = 0 $ at time epoch $ t = T $. In
\autoref{section:reward_equivalence} we prove the following theorem.

\begin{theorem}\label{theorem:1}
The two reward definitions give the same expected total reward for any fixed design.
\end{theorem}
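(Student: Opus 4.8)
The plan is to show that, for any fixed design, \emph{both} reward definitions make the expected total reward equal to the expected number of successes observed among the $ T $ subjects; the claimed equality then follows immediately. I would first fix a design and work with the induced stochastic process $ \bigl( ( \vecx_{ t } , \veci_{ t } ) , a_{ t } \bigr)_{ t = 0 }^{ T } $ on a common probability space, and let $ S_{ t } $ denote the number of successes recorded by time epoch $ t $ (so $ S_{ t } = s_{ C } + s_{ D } $ in the state reached at epoch $ t $, $ S_{ 0 } = 0 $ because $ \vecx_{ 0 } = ( 0 , 0 , 0 , 0 ) $, and $ S_{ T } $ is the total number of observed successes). Under the novel definition all one-period rewards vanish except the terminal one, which equals $ s_{ C } + s_{ D } = S_{ T } $, so the total reward is deterministically $ S_{ T } $ and its expectation is $ \mathbb{E}[ S_{ T } ] $.

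The substantive step is to recognize the conventional per-period reward as a conditional expected increment of $ S_{ t } $. Reading off the transition probabilities $ h^{ a } $, the probability that the $ ( t + 1 ) $-st response is a success --- equivalently that $ S_{ t + 1 } - S_{ t } = 1 $ --- conditionally on the history through epoch $ t $ and on the chosen action $ a_{ t } $, is exactly $ p_{ C }^{ a_{ t } } q_{ C , ( \vecx_{ t } , \veci_{ t } ) , 1 } + p_{ D }^{ a_{ t } } q_{ D , ( \vecx_{ t } , \veci_{ t } ) , 1 } $, i.e.\ the conventional one-period reward $ r_{ ( \vecx_{ t } , \veci_{ t } ) }^{ a_{ t } } $. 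Taking expectations, applying the tower property, and telescoping $ \sum_{ t = 0 }^{ T - 1 } \mathbb{E}[ S_{ t + 1 } - S_{ t } ] = \mathbb{E}[ S_{ T } ] $ shows the conventional expected total reward is also $ \mathbb{E}[ S_{ T } ] $, which finishes the proof. An equivalent packaging, which may read more cleanly, is a backward induction on $ t $ showing that the expected-reward-to-go functions of the fixed design satisfy $ V_{ t }^{ \mathrm{novel} } = ( s_{ C } + s_{ D } ) + V_{ t }^{ \mathrm{conv} } $ in every state: the base case $ t = T $ is immediate, and the inductive step combines the two Bellman recursions with the one-line identity $ \sum_{ \vecx^{ \prime } } h_{ ( \vecx , \veci ) , ( \vecx^{ \prime } , \veci^{ \prime } ) }^{ a } ( s_{ C }^{ \prime } + s_{ D }^{ \prime } ) = ( s_{ C } + s_{ D } ) + r_{ ( \vecx , \veci ) }^{ a } $, where $ r $ is the conventional reward; evaluation at $ \vecx_{ 0 } = ( 0 , 0 , 0 , 0 ) $ then gives $ V_{ 0 }^{ \mathrm{novel} } = V_{ 0 }^{ \mathrm{conv} } $.

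I do not expect a genuine obstacle here: once one sees that each definition separately computes the expected number of successes, the statement is essentially forced. The only place that needs care is bookkeeping in full generality --- the design may be randomized and history-dependent, so the conditioning (or induction) should be carried out over full histories, or over the physical state suitably augmented, rather than over $ ( \vecx , \veci ) $ alone; and when the information state $ \veci $ changes during the trial the relevant transition kernel is the amended one rather than the plain $ h^{ a } $ displayed in the model. Neither complication touches the key identity, but it is the step most likely to require careful notation. If one also allows dropouts, so that $ \sum_{ o \in \setO } q_{ k , ( \vecx , \veci ) , o } < 1 $, the same argument goes through and both definitions still agree, now both equal to the expected number of \emph{observed} successes.
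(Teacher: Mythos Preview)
Your proposal is correct. In fact, the second packaging you describe --- backward induction establishing $V_{t}^{\mathrm{novel}} = (s_{C}+s_{D}) + V_{t}^{\mathrm{conv}}$ in every state --- is precisely the paper's proof: the paper states this as a Lemma, sets up the two Poisson recursions $F_{t}$ (conventional) and $G_{t}$ (novel), and proves $G_{t} = F_{t} + s_{C} + s_{D}$ by induction on $t$, using exactly the one-line identity you wrote. Your primary telescoping argument via $\mathbb{E}[S_{t+1}-S_{t}\mid \text{history}] = r_{(\vecx_{t},\veci_{t})}^{a_{t}}$ is a slightly more direct probabilistic route that the paper does not spell out; it has the advantage of making transparent \emph{why} both definitions compute the same quantity (the expected total successes), whereas the paper's recursion verifies the identity mechanically. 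Your caveats about history-dependence and changing $\veci$ are well taken; the paper in fact restricts its proof to constant $\veci$ and works state-by-state, so your treatment is, if anything, a touch more careful on that point.
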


\paragraph{State and Action Processes.}
The evolution of a Markov decision process is captured by the state process, which in full generality is 2-dimensional in order to keep the physical and
information states separately, $ ( \vecX ( \cdot ) , \vecI ( \cdot ) ) $, and the action process which depends on the state process, but can be briefly
written as $ A ( \cdot ) $, where $ A_{ ( \vecX ( t ) , \vecI ( t ) ) } \in \setA_{ ( \vecX ( t ) , \vecI ( t ) ) } $.

\section{Assumptions, Settings and Objectives}
\label{section:settings}

\subsection{Usage}

There are three principal types of usage of the model described above.

\paragraph{Evaluation by Simulation.}

Computer simulation is now a commonly used evaluation tool as it is relatively straightforward and the accuracy vs runtime trade-off can be addressed by
adjusting the number of simulation runs. But we believe that it has the \emph{law-of-the-hammer} syndrome of all simple universal tools: ``if the only tool
you have is a hammer, to treat everything as if it were a nail.''

\paragraph{Evaluation by Backward Recursion.}

In this paper, we give evidence that it is possible and preferable to use backward recursion instead of simulation for evaluation. This yields a perfectly
accurate evaluation (subject to computational accuracy of the chosen numerical type). We discuss its runtime in \autoref{section:myths}.

\paragraph{Optimization.}

When the action set is not singular in all states, there is room for choosing one of the actions for every state according to an objective of maximizing
some function. This does not necessarily need to be done by backward recursion; we describe several approaches in \autoref{section:approaches}. In this
case we assume that the success probabilities are unknown as otherwise it is trivial to optimize.

\subsection{Knowledge Assumptions}

While the success probabilities are assumed unknown for optimization, they may not necessarily be so for evaluation. There are two principal ways of
specifying the probabilities $ q_{ k , ( \vecx , \veci ) , o } $ of observing response $ o $, which depend on the knowledge assumption about the success
probabilities.

\paragraph{Known Success Probabilities.}
Evaluation of all the approaches described in \autoref{section:approaches} can be done by assuming that success probabilities $ \theta_{ k } $ are known,
and part of $ \vecI ( t ) $ for all $ t $. In that case the transition probabilities are independent of the physical state, so we can write $ q_{ k , \veci
, o } $. If the information state is $ \veci = ( \theta_{ C } , \theta_{ D } ) $ during the whole trial, then
\begin{align*}
q_{ k , \veci , o } =
\begin{cases}
\theta_{ k } & \text{ if } o = 1 \\
1 - \theta_{ k } & \text{ if } o = 0 \\
\end{cases}
\end{align*}

\paragraph{Unknown Success Probabilities.}
Approaches that allow for optimization assume that the success probabilities are unknown (otherwise the decision with the objective of maximising the
expected number of successes is trivial), and so require estimates of these, which can be obtained using Bayesian updating. Following the existing
literature, we use the Bayesian Beta-Bernoulli model for each arm $ k \in \setK $, in which $ \theta_{ k } $ is assumed to be a random variable drawn from
Beta distribution dependent on the state. At the initial time epoch $ t = 0 $, i.e., in physical state $ ( 0 , 0 , 0 , 0 ) $, each arm $ k $ is given a
prior Beta distribution with parameters $ \widetilde{ \successes }_{ k } ( 0 ) , \widetilde{ \failures }_{ k } ( 0 ) $. These parameters can be interpreted
as the numbers of pseudo-observations of successes and failures before the start of the trial. They are thus part of the information state $ \vecI ( t ) $
for all $ t $ and do not change over time. At every time epoch $ t $, in physical state $ \vecx = ( \successes_{ C } , \failures_{ C } , \successes_{ D } ,
\failures_{ D } ) $, each arm $ k $ has the posterior distribution given, because of conjugacy, by the Beta distribution with parameters $ \widetilde{
\successes }_{ k } , \widetilde{ \failures }_{ k } $, briefly $ \text{Beta} ( \widetilde{ \successes }_{ k } , \widetilde{ \failures }_{ k } ) $, where $
\widetilde{ \successes }_{ k } = \widetilde{ \successes }_{ k } ( 0 ) + \successes_{ k } $ and $ \widetilde{ \failures }_{ k } = \widetilde{ \failures }_{
k } ( 0 ) + \failures_{ k } $.

If the information state is $ \veci = ( \widetilde{ \successes }_{ C } ( 0 ) , \widetilde{ \failures }_{ C } ( 0 ) , \widetilde{ \successes }_{ D } ( 0 ) ,
\widetilde{ \failures }_{ D } ( 0 ) ) $ during the whole trial, then
\begin{align*}
q_{ k , ( \vecx , \veci ) , o } =
\begin{cases}
\frac{ \widetilde{ \successes }_{ k } }{ \widetilde{ \successes }_{ k } + \widetilde{ \failures }_{ k } } & \text{ if } o = 1 \\
\frac{ \widetilde{ \failures }_{ k } }{ \widetilde{ \successes }_{ k } + \widetilde{ \failures }_{ k } } & \text{ if } o = 0 \\
\end{cases}
\end{align*}

The conventional assumption is to take the uniform distribution as a prior distribution on each arm $ k $, i.e., $ ( \widetilde{ \successes }_{ k } ( 0 ) ,
\widetilde{ \failures }_{ k } ( 0 ) ) = ( 1 , 1 ) $; for a discussion on choosing a different prior Beta distribution, see \autoref{section:priors}.

\subsection{Performance Measures and Objectives}

In this paper we focus on the \emph{number of successes} as the principal performance measure (a.k.a. operating characteristic in clinical trials
literature). But instead of the number of successes, we report two equivalent measures: the \emph{proportion of successes} and the \emph{regret number of
successes}, as these provide complementary interpretation and insights. Yet another equivalent measure (not reported in this paper) is the \emph{fraction
of subjects allocated to the better arm}. Many other additive measures, e.g. monetary cost typical in health economics and health technology assessment,
can be defined analogously, but are not discussed in this paper.

A particular design $ \pi $ prescribes the action process $ A ( \cdot ) $. Let $ \Pi $ be the set of designs that are non-anticipating\footnote{A
non-anticipating design is a design which cannot see into the future; i.e., an action prescribed by the design at a given time epoch does not require the
knowledge of states which have not been observed yet.} and satisfy the above constraints on $ A ( \cdot ) $.

Let us denote by $ \Expectation^{ \pi }_{ t } [ \cdot ] $ the expectation under design $ \pi \in \Pi $ conditioned on information available at time epoch $
t \in \setT $. The mean number of successes is
\begin{align}
\totalNumber^{ \pi }_{ t } \left( \vecx , \veci \right) := \Expectation^{ \pi }_{ t } &\left[ \left. \sum_{ u = t }^{ T } r_{ ( \vecX ( u ) , \vecI ( u ) ) }^{ A_{ ( \vecX ( u ) , \vecI ( u ) ) } } \right| \left( \vecX ( t ) , \vecI ( t ) \right) = \left( \vecx , \veci \right) \right] .
\end{align}
and the mean proportion of successes is
\begin{align}
\totalProportion^{ \pi }_{ t } \left( \vecx , \veci \right) := \frac{ 1 }{ T - t } \totalNumber^{ \pi }_{ t } \left( \vecx , \veci \right) .
\end{align}
These two are measures of \emph{subject benefit}, while the former is on the absolute scale, the latter yields the average per-subject probability of
observed success, i.e., the subject benefit on the percentage scale.
We further define the mean regret number of successes,
\begin{align}
\totalRegret^{ \pi }_{ t } \left( \vecx , \veci \right) := \Expectation \left[ \left. \max\{ \theta_{ C } , \theta_{ D } \} \right| \vecI ( t ) = \veci \right] - \totalNumber^{ \pi }_{ t } \left( \vecx , \veci \right) ,
\end{align}
which is a measure of \emph{subject loss}. Note that all the three measures depend on parameter $ T $, although we have suppressed the explicit notation.

The objective is to find an optimal design $ \pi^{ * } $ that maximises the mean number of successes as evaluated at time epoch $ t = 0 $ when there are no
observations ($ \vecx = \veczero $), i.e.,
\begin{align}
\pi^{ * } := \argmax_{ \pi \in \Pi } \totalNumber^{ \pi }_{ 0 } \left( \veczero , \veci \right)
\end{align}
or, equivalently, maximizes the mean proportion of successes, or, equivalently, minimizes the mean regret number of successes.

Following the two knowledge assumptions above, we have two approaches to performance evaluation.

\paragraph{Known Success Probabilities.}
When the success probabilities are assumed to be known, we call the above measures the \emph{frequentist number of successes}, the \emph{frequentist
proportion of successes}, and the \emph{frequentist regret number of successes}, respectively. Due to symmetry, we can assume that $ \theta_{ C } \le
\theta_{ D } $ without loss of generality, and thus
\begin{align}
\totalRegret^{ \pi }_{ t } \left( \vecx , \veci \right) = \theta_{ D } - \totalNumber^{ \pi }_{ t } \left( \vecx , \veci \right) .
\end{align}

\paragraph{Unknown Success Probabilities.}
When the success probabilities are assumed to be unknown, we evaluate performance in terms of the quantities known in the literature as the Bayes return,
Bayes worth, or Bayes risk. In particular, we call the above measures the \emph{Bayes number of successes}, the \emph{Bayes proportion of successes}, and
the \emph{Bayes regret number of successes}, respectively.

For a problem with uniform distribution as a prior distribution on each arm, as considered in this paper, $ \Expectation \left[ \left. \max\{ \theta_{ C }
, \theta_{ D } \} \right| \vecI ( t ) = \veci \right] = 2 / 3 $ \citep{Berry1978jasa}, thus
\begin{align}
\totalRegret^{ \pi }_{ t } \left( \vecx , \veci \right) = \frac{ 2 }{ 3 } - \totalNumber^{ \pi }_{ t } \left( \vecx , \veci \right) ,
\end{align}

Note that our definition is different from the so-called Bayesian regret \citep[cf.][Section 34.6]{LattimoreSzepesvari2019book}, which is the average of
the frequentist regret with respect to the prior distribution, i.e. it is Bayesian only in the initial time epoch.

%
%
%
%
%
%

\section{Approaches}
\label{section:approaches}

In this section we describe a number of common approaches to the problem described in the previous section. We start with Equal Randomized Allocation,
which was originally developed for static setting, in which all the subjects arrive at the same time. All the remaining approaches are suitable only in
dynamic setting, i.e., are data-driven.

\subsection{Design of Experiments}

Statistical testing based on \emph{equal (i.e., \design{1:1}) randomized allocation} (a.k.a. random assignment) is a widespread state-of-the-art approach
in the design of experiments today, known as \emph{randomized controlled trial} in medicine, \emph{between-group design} in social sciences, and \emph{A/B
testing} in digital marketing. I theory, it allows the greatest reliability and validity of statistical estimate of the intervention effect (i.e., the
difference between the two success probabilities) under an initial equipoise assumption. Originally developed, advocated and popularized by the founders of
statistics such as C. S. Peirce in the fields of psychology and education in the late 19th century, and J. Neyman and R. A. Fisher in agriculture and other
fields in the early 20th century. In the middle of the 20th century, A. B. Hill popularized the method in the field of medicine and it is now the preferred
approach by regulatory agencies for assessing efficacy when deciding about marketing authorisation of new medicinal interventions in many countries. The
approach can be evaluated using routine (frequentist) statistical methods.

In this approach the transition probabilities are frequentist, and
\begin{itemize}

\item the information state $ \veci $ is ignored;

\item the action set is constant, $ | \setA | = 1 $ and $ p_{ C }^{ a } = p_{ D }^{ a } = 1 / 2 $;

\end{itemize}

If the success probabilities are known, then its performance can be evaluated directly: the proportion of successes with mean $ ( \theta_{ C } +
\theta_{ D } ) / 2 $ and standard deviation $ \sqrt{ ( \theta_{ C } + \theta_{ D } ) ( 2 - \theta_{ C } - \theta_{ D } ) / 4 T }$; the regret
number of successes with mean $ T \max \{ ( \theta_{ C } - \theta_{ D } ) / 2 , ( \theta_{ D } - \theta_{ C } ) / 2 \} $ and standard deviation
$ \sqrt{ T ( \theta_{ C } + \theta_{ D } ) ( 2 - \theta_{ C } - \theta_{ D } ) / 4 }$.

The approach is believed to be well understood. However, there are several potential concerns, e.g., (i) it was developed under the assumption of an
infinite population of subjects to which the best arm will be applied, but it is not clear how valid it is when the population is finite or when new,
potentially better arms become available in future; (ii) it was developed under the assumption of parallel allocation of subjects, but in dynamic (i.e.
sequential) setting, in which subjects arrive one by one, there is a risk of introducing accrual and/or allocation bias in the estimation of the success
probabilities if the person (or machine) delivering an intervention is aware of the characteristics and/or responses of the previous subjects; (iii) it was
developed under the assumption of the simple random sampling method, which is not true if the subjects need to provide a consent (which is a legal
requirement, for instance, in clinical trials); (iv) it was developed under the assumption of the use of random numbers, which not true for computer
generated randomization (pseudo-random numbers) provided by error-prone software and procedures of external companies.

\subsection{Bayesian Decision Theory}

Bayesian Decision Theory emerged together with game theory and mathematical programming in the middle of the 20th century, building on models and
techniques from Bayesian statistics, applied mathematics, and economics. A number of prominent researchers contributed to the development of the field,
including A. Wald, J. Wolfowitz, L. J. Savage, K. J. Arrow, D. Blackwell, H. Raiffa, R. Schlaifer, R. Bellman, etc.

The two-armed problem with binary responses was first formulated in this framework in \citet{Bellman1956}, with a known success probability of one arm,
over an infinite horizon $ T = + \infty $, and with geometric discounting of future rewards. He proposed to employ backward recursion based on the Bellman
equation from dynamic programming he had developed. \citet{Gluss1962iac} extended the model to two unknown arms using the same approach and discussed
the theoretical memory requirement relevant for online optimization assuming a finite-horizon truncation, after which the better arm is used forever.
\citet{Steck1964moc} programmed the backward recursion on one of the scientific computers of those times (Univac 1105) and listed the optimal allocations
with the truncation at $ T = 25 $ (and the discount factor $ 0.95 $).

\begin{figure}[tbp]
\centering
    \begin{subfigure}[b]{0.48\textwidth}
        \includegraphics[trim=0pt 0pt 0pt 0pt, clip=true, width=\textwidth]{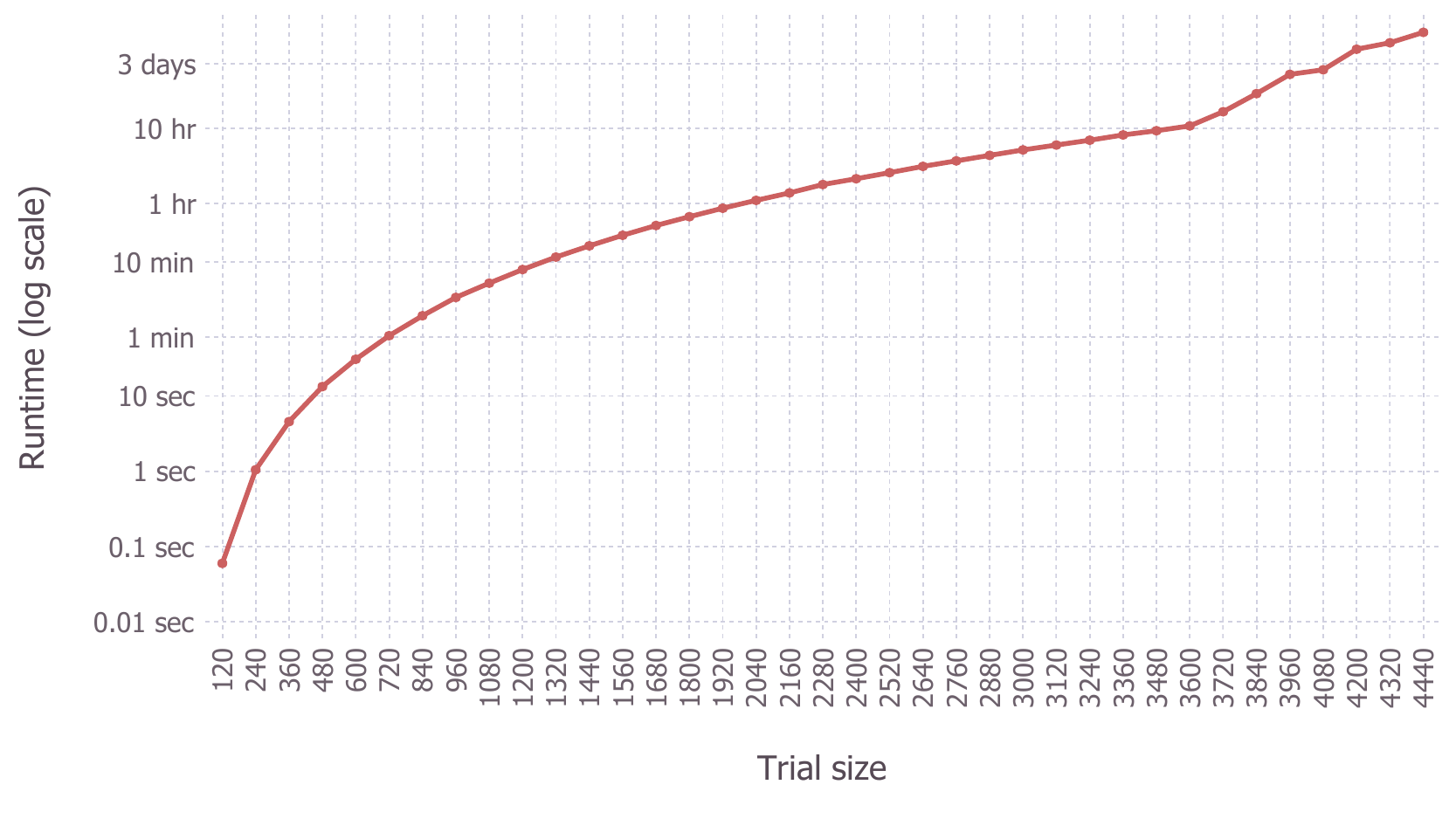}
    \end{subfigure}\hfill%
    \begin{subfigure}[b]{0.48\textwidth}
        \includegraphics[trim=0pt 0pt 0pt 0pt, clip=true, width=\textwidth]{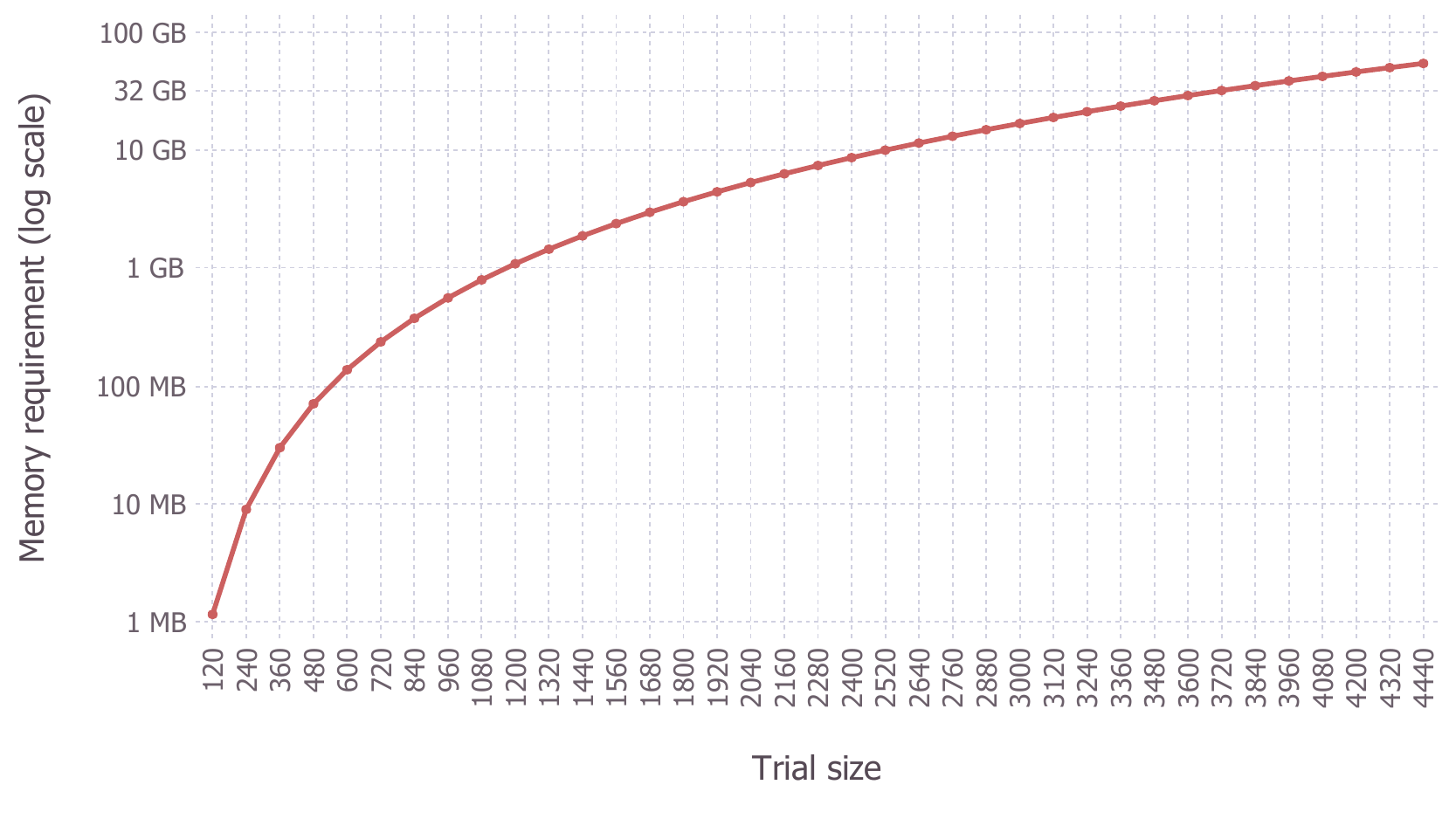}
    \end{subfigure}%
\caption{An illustration of computational complexity of online calculation of the deterministic Bayesian decision-theoretic design over a range of trial sizes.}\label{fig:1}
\end{figure}

For finding the best action for each state via this approach, the transition probabilities are Bayesian, and
\begin{itemize}

\item the conventional assumption is to take as information state $ \veci $ (that does not change over time) the prior distribution on each arm $ k $;

\item the action set is constant, $ | \setA | = 3 $;

\end{itemize}

Since \autoref{theorem:1} is true for any design, it is also true for the optimal design, which is derived using the Bellman equation, which is an
optimization equivalent of the Poisson equation.

\begin{theorem}
The two reward definitions give the same optimal design and to the same optimal expected total reward.
\end{theorem}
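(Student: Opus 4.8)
The plan is to obtain the statement directly from \autoref{theorem:1}, and then, for completeness, to re-derive it at the level of the Bellman recursion, since that is the route by which the optimal design is actually computed.

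For the first part, recall that an optimal design is by definition any maximiser over $\pi \in \Pi$ of $\totalNumber^{\pi}_{0}(\veczero,\veci)$. \autoref{theorem:1} asserts that the two reward definitions assign the same expected total reward to every fixed design $\pi \in \Pi$; equivalently, the two objective functions $\pi \mapsto \totalNumber^{\pi}_{0}(\veczero,\veci)$ coincide on all of $\Pi$. Two functions that agree on a common domain share the same set of maximisers and the same supremum, so the two optimisation problems have exactly the same optimal designs and the same optimal expected total reward.

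For the second part, I would run the argument of \autoref{theorem:1} conditionally on an arbitrary state $(\vecx,\veci)$ at time epoch $t$ (so that $\successes_{C}+\failures_{C}+\successes_{D}+\failures_{D}=t$) rather than on the empty initial state $\veczero$. This shows that the value functions produced by backward recursion under the two reward definitions differ by exactly the number of successes already observed: writing $V^{(1)}$ for the novel and $V^{(2)}$ for the conventional one, $V^{(1)}_{t}(\vecx,\veci) = (\successes_{C}+\successes_{D}) + V^{(2)}_{t}(\vecx,\veci)$, a quantity depending on the current physical state alone and hence neither on the action taken in $(\vecx,\veci)$ nor on the continuation policy. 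Substituting this identity into the Bellman equation for $V^{(1)}$, the term $\successes_{C}+\successes_{D}$ is constant with respect to the maximisation over $a\in\setA_{(\vecx,\veci)}$, so the set of optimal actions in each state is unchanged — including ties, so that the equally-weighted mixed action $3$ is optimal in a state under one reward definition precisely when it is under the other. Hence backward recursion returns the same optimal design, and at $t=0$, where $\successes_{C}+\successes_{D}=0$, the same optimal expected total reward.

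The only delicate points — and the main, rather minor, obstacle — are the bookkeeping ones already present in the proof of \autoref{theorem:1}: verifying that the state-dependent shift is precisely $\successes_{C}+\successes_{D}$ and is genuinely action-independent (this uses $\sum_{o\in\setO} q_{k,(\vecx,\veci),o}=1$ and $p_{C}^{a}+p_{D}^{a}=1$ together with the terminal-epoch convention, where the novel reward equals $\successes_{C}+\successes_{D}$ while the conventional one equals $0$), and noting that a shift constant across actions preserves ties, so that the identification of optimal designs is exact rather than merely up to equality of objective values.
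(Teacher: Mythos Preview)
Your proposal is correct and essentially matches the paper's approach. The paper's proof of this theorem is a one-sentence remark: since \autoref{theorem:1} holds for every fixed design, it holds for the optimal one, the Bellman equation being the optimisation analogue of the Poisson equation used to prove \autoref{theorem:1}. Your first part is exactly this argument; your second part simply makes explicit what the paper leaves implicit, namely that the state-wise identity $G_{t}=F_{t}+s_{C}+s_{D}$ established in the Lemma (for the Poisson recursion) carries over verbatim to the Bellman recursion because the shift is action-independent and hence passes through the maximisation, preserving both argmax and ties.
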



The conventional assumption is that the two pure actions are deterministic, i.e., $ p_{ C }^{ 1 } = p_{ D }^{ 2 } = 1 , p_{ C }^{ 2 } = p_{ D }^{ 1 } = 0
$, which we will refer to as the \emph{deterministic Bayesian decision-theoretic design}, despite the fact that randomization is allowed when both actions
are optimal.

A more general setting, in which the pure actions are allowed to be randomized instead of deterministic, was proposed in \citet{ChengBerry2007biometrika}
and further developed in \citet{WilliamsonEtal2017csda}. In fact, such a setting provides a continuum of designs, ranging from the deterministic Bayesian
decision-theoretic design to the equal randomized allocation design, recovered by setting $ p_{ C }^{ 1 } = p_{ C }^{ 2 } = p_{ D }^{ 1 } = p_{ D }^{ 2 } =
1 / 2 $.

\subsubsection{Optimal --- Dynamic Programming}

The Bayesian decision-theoretic model can be solved by (stochastic) dynamic programming (\design{DP}), which comprises of a calculation starting by
enumerating all the possible states in the final time period, continuing backwards in time while employing the Bellman equation in every state.
Unfortunately, complete structure of the optimal design is unknown, thus numerical computation is the only way of obtaining it.

\autoref{fig:1} illustrates the computational complexity of \emph{online calculation} (i.e., outputting the optimal action of the initial state) of this
design, using a state-of-the-art package \emph{BinaryBandit} in Julia programming language. In the number of elements (i.e., without multiplying by the
memory required for each element), the memory requirement presented in \autoref{fig:1}(right) is $ 2 $ times larger than that of \citet{Gluss1962iac}
because of his elimination of symmetric states, which can be done if the prior Beta distributions are the same for the two arms.

The computational complexity of \emph{offline calculation} (i.e., outputting the optimal actions of all possible states) of this design is very similar to
online calculation in terms of the runtime, but radically different in terms of memory requirement. Using the \emph{BinaryBandit} Julia package, a computer
with 32GB RAM is able to solve the problem of up to trial size $ T = 1,440 $, keeping all the optimal actions in RAM during the calculations. For practical
purposes, however, it may be possible to store parts of the solution on hard disk and thus relieve RAM memory to allow calculation for larger trial sizes.

A number of author's colleagues and PhD students who had programmed this design for their needshave kindly provided runtimes of their code implementations,
which are presented in \autoref{table:code_runtime} for comparison, indicating that the BinaryBandit package is two orders of magnitude faster than ad hoc
codes and is able to solve a few times larger problems.

\begin{table}[tbp]
\centering
\begin{tabular}{lrrrrrrr}
\toprule
Software & RAM & $ T = 60 $ & $ T = 120 $ & $ T = 180 $ & $ T = 240 $ & $ T = 300 $ & $ T^{ \max } $ \\
\midrule
Julia 0.6.2 \& ad hoc & 12 GB & 2sec & 22sec & 108sec & 331sec & 789sec & 420 \\
Julia 1.0.1 \& ad hoc & 12 GB & 1sec & 17sec & 82sec & 262sec & 643sec & 420 \\
R \& ad hoc & 16 GB & 1sec & 12sec & 59sec & 191sec & N/A & 240 \\
Julia 1.0.1 \& BB & 31 GB & 0.0036sec & 0.046sec & 0.23sec & 0.73sec & 1.6sec & 1440 \\
\midrule
R \& ad hoc & 5 GB & 1sec & 6sec & 26sec & 84sec & 209sec & 420 \\
Julia 1.0.1 \& BB & 31 GB & 0.0040sec & 0.056sec & 0.27sec & 0.91sec & 2.8sec & 4440 \\
\bottomrule
\end{tabular}
\caption{A comparison of runtime of the Bayesian decision-theoretic design for $ T = 60 : 60 : 300 $ and the largest horizon $ T^{ \max } $ (as a multiple of $ 60 $) which does not give an out-of-memory error using code implementations by author's colleagues and students. The top four are for offline calculation, the bottom two are for online calculation. BB refers to the use of the BinaryBandit package.}\label{table:code_runtime}
\end{table}

\subsubsection{Asymptotically Optimal --- Gittins and Whittle Index Rules}
\label{section:Gittins}

The structure of the optimal Bayesian decision-theoretic designs with deterministic pure actions is however known when considering the rewards over an
infinite horizon $ T \to + \infty $ and discounted with a geometrically-distributed discount factor $ 0 < \gamma < 1 $. \citet{GittinsJones1974} discovered
that optimal allocations can be characterized by an \emph{index rule}, which allows (Gittins) index values to be calculated for every arm separately, and
which at every time epoch allocates a subject to the arm with the highest Gittins index value (breaking the ties arbitrarily). See, e.g., \citet{Gittins1979,
Gittins1989, GittinsEtal2011} for general theory on the Gittins index.

Although theoretically appealing and useful in many other problems, in the setting of the Bayesian decision-theoretic design it does not provide an
ultimate solution, because the states are time-dependent and the horizon $ T $ is finite. Still, the Gittins index rule can be used as an approximation to
the optimal design, being asymptotically optimal as $ T \to +\infty $. It decreases the computational complexity of the offline calculation notably because
of a decreased size of the problem (one arm, i.e. two dimensions), but is still computed by dynamic programming, which is computationally demanding. In its
calculation, however, it is necessary either to use a discount factor and/or a horizon truncation, or to compute it for every horizon $ T $ considered by
adding the remaining number of subject allocations as a third dimension. In the former case, the resulting index rule is only an approximation and tends to
focus on learning less than the optimal design, see e.g. \citet{VillarEtal2015survey}. In the latter case, \citet[Section 6.2]{Nino2011} provides a
comparison of an approximate, so-called calibration, algorithm (using a grid of values to desired accuracy) and an exact algorithm for the calculation of
the Whittle index values, showing that already $ T \le 100 $ requires several GB of RAM, which suggests that the calibration algorithm with a grid of not
more than $ 4 $ significant digits is the only practical method for larger horizons.


Besides the requirement of infinite horizon, the theory of the Gittins index only applies when the pure actions are deterministic. When these are
randomized, the problem becomes so-called \emph{restless}, meaning that more than one arm can change its state in every period. Also, even when the pure
actions are deterministic, but the horizon is finite, the problem can be seen as restless, by adding the remaining number of subject allocations to the
state of each arm. \citet{Whittle1988} proposed to solve the restless problem also by an index rule, acknowledging that such a rule would not necessarily
be optimal, but conjecturing that it would admit a form of asymptotic optimality as both the number of arms and the number of allocated arms in each period
grow to infinity at a fixed proportion, which was eventually proved in \citet{WeberWeiss1990} under some technical assumptions. Whittle defined an index,
which reduces to the Gittins index in the non-restless setting, which became known as the \emph{Whittle index}. The above discussion suggests that the
Whittle index rule is conceptually more appropriate and more accurate than the Gittins index rule, and this was confirmed for the Bayesian
decision-theoretic design numerically, see, e.g., \citet{VillarEtal2015survey, Villar2018peis}. The computational complexity of the approximate Whittle
index values calculated by the calibration method for every horizon $ T $ considered (using a grid of values to desired accuracy) is similar to that of the
approximate Gittins index values using the same method.

Note that both the Gittins and Whittle index rule require to keep $ 3 $ elements in the one-arm state, so the reduction from the optimal two-armed problem
is only by one dimension. Moreover, in offline calculation, the index values that need to be stored are non-integer, thus require $ 64 $ bits per one-arm
state, while the offline calculation of the solution to the two-armed problem stores directly optimal actions, which require $ 2 $ bits per two-arm state.
Thus, the index rules (calculated to a few significant digits) are typically preferable to dynamic programming in calculation for horizons around $ T \ge
1000 $ and only if planned to be used for evaluation by simulation or for implementation in practice. However, \citet{Kaufmann2018aos} reports that she was
able to compute the Gittins index values only up to $ T = 1000 $.

Of course, advantages of index rules become important in problems with more than two arms, in which dynamic programming suffers from the \emph{curse of
dimensionality}. A discussion of such problems is however beyond the scope of this paper.

\subsubsection{Approximately Optimal --- Approximate Dynamic Programming}

Several general approaches have been proposed to deal with the curse of dimensionality of stochastic optimization problems, which are collectively known as
the approximate dynamic programming. There is a number of approximation techniques, but broadly focus on problem size reduction (e.g., the state space is
approximated by a grid for which optimal actions are computed, and interpolated on non-included states) and/or on simplification of function computations
(e.g. the value function is approximated by looking at decisions only a few periods ahead). See, e.g., \citet[Section 6.6]{PowellRyzhov2018book},
\citet{AhujaBirge2019report}.

We will describe one approach, which leads to a Bayesian design known as the \emph{knowledge gradient} (\design{BKG}). The fundamental idea to reduce the
amount of information and computation required for a decision in a given state at a given time epoch is to assume that this decision is the last one, and
in the next period we will identify the better arm which will whence be allocated to all the remaining subjects. \citet[Section 7.2]{FrazierEtal2008sjco}
showed that it is optimal for a search variant (i.e., maximizing the final-period expected reward) of the two-armed bandit problem with continuous
responses. A specific variant of this approach for our setting was presented in \citet[Section 4.7.1]{PowellRyzhov2012book} and further studied and
improved in \citet{EdwardsEtal2017peis}.

Let us denote by $ \mu_{ k } ( \widetilde{ \successes }_{ k } ( t ) , \widetilde{ \failures }_{ k } ( t ) ) $, or briefly $ \widetilde{ \mu }_{ k } $, the
\emph{belief}, i.e., the mean of the posterior Beta distribution of arm $ k $ calculated using posterior observations (i.e., both the observations and
prior pseudo-observations), i.e.,
\begin{align}
\widetilde{ \mu }_{ k } := \frac{ \widetilde{ \successes }_{ k } ( t ) }{ \widetilde{ \successes }_{ k } ( t ) + \widetilde{ \failures }_{ k } ( t ) } .
\end{align}
We will further denote the belief conditional on an additional (not necessarily binary) observation $ \delta $, respectively, by
\begin{align}
\widetilde{ \mu }_{ k }^{ +\delta } := \frac{ \widetilde{ \successes }_{ k } ( t ) + \delta }{ \widetilde{ \successes }_{ k } ( t ) + \widetilde{ \failures }_{ k } ( t ) + 1 } .
\end{align}

Under independent prior Beta distributions, the allocation at epoch $ t $ by this design is to the arm $ k $ with currently the largest value of
the following score
\begin{align}
\begin{cases}
\widetilde{ \mu }_{ k } + ( T - t - 1 ) \widetilde{ \mu }_{ \ell } & \text{ if } \widetilde{ \mu }_{ \ell } \ge \widetilde{ \mu }_{ k }^{ +1 } \\
\widetilde{ \mu }_{ k } + ( T - t - 1 ) \left[ ( 1 - \widetilde{ \mu }_{ k } ) \widetilde{ \mu }_{ \ell } + \widetilde{ \mu }_{ k } \widetilde{ \mu }_{ k }^{ +1 } \right] & \text{ if } \widetilde{ \mu }_{ k }^{ +1 } \ge \widetilde{ \mu }_{ \ell } \ge \widetilde{ \mu }_{ k }^{ +0 } \\
\widetilde{ \mu }_{ k } + ( T - t - 1 ) \widetilde{ \mu }_{ k }^{ +\widetilde{ \mu }_{ k } } & \text{ if } \widetilde{ \mu }_{ k }^{ +0 } \ge \widetilde{ \mu }_{ \ell }
\end{cases}
\end{align}
where $ \ell $ refers to the other arm, and ties are broken randomly. Although this score can be interpreted as capturing the allocation priority similarly
to the Gittins and Whittle indices, it depends on the state of the other arm, thus, strictly speaking, it is not an index.

\subsection{Na\"{\i}ve Designs}

While in the previous subsection we described designs which are horizon-dependent and forward-looking in the way of making allocation decisions, we now
turn our attention to horizon-independent designs and present a number of different na\"{\i}ve approaches in this subsection. Such designs are
computationally simple and easy to interpret, hence sometimes more appealing for practical use. It turns out that such deterministic designs are either
related to \emph{myopia} ($ T = 1 $) or to \emph{utopia} ($ T = +\infty $).

We will say that a design leads to \emph{complete learning}, if in the problem considered over an infinite horizon (i.e., $ T = +\infty $) it identifies
the better arm almost surely. It is easy to see that to achieve complete learning it is necessary to allocate each arm infinitely often.

\subsubsection{Myopic Index Rules}

\citet{BradtEtal1956} considered the setting in which one knows the two-point set $ \{ \theta_{ C } , \theta_{ D } \} $, but does not know which arm is
which, and proved that if $ \theta_{ C } + \theta_{ D } = 1 $, then it is optimal, under any prior distributions, to deterministically allocate every
subject to the arm with currently largest Bayesian expected one-period reward. The also proved the same in the case $ \theta_{ C } + \theta_{ D } = 1 $
when the set $ \{ \theta_{ C } , \theta_{ D } \} $ is unknown. For the case $ \theta_{ C } + \theta_{ D } \neq 1 $ with known set $ \{ \theta_{ C } ,
\theta_{ D } \} $ they proved that such a design leads to complete learning and conjectured that it is optimal in terms of the Bayesian expected number of
successes. They also discussed that such a design is not optimal if both of these assumptions are dropped.

Note that under independent Beta prior distributions, the allocation at epoch $ t $ by this design is to the arm $ k $ with currently the largest mean
calculated using posterior observations (i.e., both the observations and prior pseudo-observations),
i.e.,
\begin{align}
\frac{ \widetilde{ \successes }_{ k } ( t ) }{ \widetilde{ \successes }_{ k } ( t ) + \widetilde{ \failures }_{ k } ( t ) }
\end{align}
where ties are broken randomly. We call this the \emph{Bayesian myopic} (\design{BM}) design, because it takes an action which is best for the next subject
only and ignores the future.

\citet{Feldman1962ams} also considered the setting with known two-point set $ \{ \theta_{ C } , \theta_{ D } \} $ and further generalized the optimality of
the Bayesian myopic design. \citet[Section 8]{Berry1972} realized that the setting with known two-point set $ \{ \theta_{ C } , \theta_{ D } \} $ (i.e.,
with strong between-arm dependence) is equivalent to the two-armed problem with independent arms with two-point prior distributions. \citet{Kelley1974as}
further generalized the setting with dependent arms and identified conditions for optimality of the Bayesian myopic design.

We also consider the \emph{frequentist myopic} (\design{FM}) design, which allocates each arm once in the first two periods, and then deterministically
allocates every subject to the arm with currently the largest mean calculated using the observations only (breaking the ties randomly), i.e.,
\begin{align}
\frac{ \successes_{ k } ( t ) }{ \successes_{ k } ( t ) + \failures_{ k } ( t ) }
\end{align}
Such a design was mentioned in \citet[Eq. (1.1)]{Bather1981jrssb} and called ``play the favourite''. \footnote{In the machine learning literature since the
2000s, this design is also known as ``follow the leader'' (as a simpler variant of ``follow the perturbed leader''). The term has however been used also
for other designs: \citet{SobelWeiss1972} are the first to use the term ``follow the leader'', but they mean a variant of ``stay-with-a-winner \&
switch-on-a-loser'', with a difference of randomizing (rather than switching) when a failure is observed and the number of failures on both arms is equal.
Many papers studying continuous bandit problems use the term ``follow the leader'' for (Gittins) index rule.}
Note that \design{BM} and \design{FM} are equivalent if and only if $ \widetilde{ \successes }_{ C } ( 0 ) = \widetilde{ \failures }_{ C } ( 0 ) =
\widetilde{ \successes }_{ D } ( 0 ) = \widetilde{ \failures }_{ D } ( 0 ) = 0 $.

\citet[Section 3]{Berry1978jasa} established that, in the setting with known two-point set $ \{ \theta_{ C } , \theta_{ D } \} $, in which the \design{BM}
design is optimal, at any epoch $ t $ at which the posterior probability that arm $ C $ is better than arm $ D $ is $ 1 / 2 $, it is optimal to allocate to
the arm with currently the greatest difference between the number of successes and the number of failures. Several tie-breaking rules can be used but they
only affect the performance marginally, so these are not discussed here in detail. We refer to this as the \emph{Bayesian greatest difference first}
(\design{BGDF}) design. The Bayesian performance of this design was illustrated numerically to be near optimal \citep{Berry1978jasa} and to outperform the
\design{BM} design \citep{Villar2018peis}. The mentioned condition applies when the one-arm prior distributions satisfy $ \widetilde{ s }_{ C } ( 0 ) =
\widetilde{ f }_{ C } ( 0 ) = \widetilde{ s }_{ D } ( 0 ) = \widetilde{ f }_{ D } ( 0 ) $, as in this paper, and note that in that case the frequentist
analogue, \design{FGDF}, yields an equivalent design.

\subsubsection{Utopic Index Rules}
\label{section:utopic}

In the frequentist setting, \citet{Robbins1952} introduced the deterministic ``stay-with-a-winner \& switch-on-a-loser'' design, in which the first
allocation is made randomly, and then the same arm is allocated whenever the observed response is a success, while the other arm is allocated whenever the
observed response is a failure. He realized that it leads to complete learning, but fails to achieve the maximum expected proportion of successes, because
it allocates the worse arm at a regular rate.

Interestingly, many of the deterministic designs actually have the ``stay-with-a-winner'' property. It is easy to see that the \design{BM}, \design{FM},
\design{BGDF} and \design{FGDF} all have it. In the Bayesian setting, \citet[Theorem 6.2]{Berry1972} proved the ``stay-with-a-winner'' property for the
deterministic \design{DP} design. Formally, the ``stay-with-a-winner'' property in this approach is: if arm $ k $ is optimal in period $ t \le T - 2 $, the
subject is allocated to it and its response is a success, then arm $ k $ is uniquely optimal in period $ t + 1 $. Contrary to the above, the
``switch-on-a-loser'' property is not, in general, satisfied by these designs. When one arm looks significantly better than the other one, the designs
would not switch after observing a failure on the former arm. See, e.g., \citet[p. 79]{BerryFristedt1985}.



\citet[Conjecture A, p. 892]{Berry1972} conjectured that for a large number of remaining allocations (i.e. at epochs $ t \ll T $), the only criterion for
Bayesian optimality in the \design{DP} design is the difference between the posterior number of failures on the two arms. (He did not specify what to do if
there is the same number of failures on the two arms.)

\citet{Kelly1981} studied the Gittins index rule in the infinite-horizon setting (in which it is optimal) as the discount factor approaches one (i.e. the
undiscounted setting) and established, under a technical condition on the prior distributions, that the Gittins index rule reduces to the following rule
which we call the \emph{frequenist least failures first} (\design{FLFF}) design: at every time epoch, allocate the subject to the arm with least observed
number of failures, breaking the ties in favor of any arm with greatest observed number of successes (breaking the double ties arbitrarily).

Although the similarity to Berry's conjecture is remarkable, \citet[Remark 4.15]{Kelly1981} stated that there seems to be no immediate link between these
two statements, and that Berry's conjecture may require a technical condition on the prior distributions. \citet[Remark 4.7]{Kelly1981} further elucidated
that \design{FLFF} is a slight variation of the ``stay-with-a-winner \& switch-on-a-loser'' design in \citet{Robbins1952}. Indeed, it is easy to see that
\design{FLFF} has the ``stay-with-a-winner'' property. After observing a failure, it switches if the other arm has lower number of failures, but it stays
or switches accordingly to the greater number of successes if the number of failures on both arms is the same.

Analogously, we call the \emph{Bayesian least failures first} (\design{BLFF}) design the one based on posterior numbers of successes and failures rather
than on the observed ones. If the prior distributions on the two arms are the same, then this design is equivalent to the \design{FLFF} design, but if they
are not, then the arm with lower number of posterior failures will be allocated without switching (at least) until the number of failures on the two arms
becomes the same.

We would like to highlight that the three radically different ``schools'' of the bandit problem, namely Robbins', Berry's and Gittins', all led to the
\design{FLFF} design with only slight variations. The ``stay-with-a-winner \& switch-on-a-loser'' design may be useful in some practical situations because
it only depends on the last observation, not on the numbers of successes and failures over the whole history.

\subsection{Reinforcement Learning --- UCB Index Rules}

\citet{LaiRobbins1985aam} laid out the theory of asymptotically optimal allocation and were the first to actually use the term \emph{``upper confidence
bound''} (UCB). They wrote it in quotes, as the quantity it refers to depends on time period $ t $, and is thus not the conventional upper bound of
confidence intervals, but can be interpreted as the upper confidence bound with significance level $ 1 / t $. \citet[Eq. (2.6)]{Lai1987as} introduced a UCB
index rule using the Kullback-Leiber divergence and \citet[Example 5.7]{Agrawal1995aap} developed UCB index values which are inflations of the observed
mean proportion of successes. The theory was further extended in \citet{BurnetasKatehakis1996} to multivariate and non-parametric distributions. The first
simple UCB index rules with finite-time theoretical guaranties were developed in \citet{AuerEtal2002}. See, e.g.,
\citet{BubeckCesa-Bianchi2012ftml,KaufmannGarivier2017esaim,LattimoreSzepesvari2019book} for accounts of subsequent developments.

The use of the observed mean proportion of successes in defining an index rule is attractive mainly because it is the maximum likelihood estimator of the
success probability in the static setting of the design of experiments. Index rules that use time-dependent inflations of the observed mean proportion of
successes were proposed and investigated even before \citet{LaiRobbins1985aam}, e.g., \citet{Bather1980aap, Bather1981jrssb, AbdelHamid1981thesis} in
frequentist setting and \citet{GittinsJones1974, Glazebrook1980jrssb, GittinsWang1992aos} in Bayesian setting. 

Following \citet[Section 2]{BubeckCesa-Bianchi2012ftml}, we consider the popular \design{$ \alpha $UCB} design which allocates each arm once in the first
two periods, and then deterministically allocates every subject to the arm with currently the largest index (breaking ties randomly) of the form
\begin{align}
\frac{ \successes_{ k } ( t ) }{ \successes_{ k } ( t ) + \failures_{ k } ( t ) } + \sqrt{ \frac{ \alpha \cdot \ln( t + 1 ) }{ \successes_{ k } ( t ) + \failures_{ k } ( t ) } }
\end{align}
where $ \alpha > 0 $. The original design introduced in \citet{AuerEtal2002} used $ \alpha = 2 $. Theoretical upper bounds currently exist for $ \alpha > 1
$, but researchers have noticed empirically that lower values of $ \alpha $ typically lead to better performance and some used $ \alpha = 1 $, see, e.g.,
\citet{CsernaEtal2017uai}. In our numerical experiments (not reported here) we found that approximately the best performance is achieved with $ \alpha =
0.18 $.


Many other types of UCB designs have been developed, see e.g., \citet[Figure 2]{Kaufmann2018aos} for comparison of some of them. Besides designs based on
idea of UCB, there is a number of popular designs with randomized actions, e.g., epsilon-greedy, Boltzmann exploration, Thompson sampling, etc.

\subsection{Biostatistics}

Blinding of patients and personnel to the allocated treatment is an important desideratum in many types of clinical trials to mitigate a variety of biases
such as performance bias, detection bias and attrition bias \citep[Section 8.4]{Cochrane2011}. If allocation is deterministic, the patients and personnel
can, if they know the state of the trial, identify the allocated treatment with certainty or high probability. Thus, some amount of randomness in the
allocation decision is desirable. Moreover, the theory of design of experiments explains that randomization is important for mitigation of the selection
bias, for ensuring similarity in the treatment groups and for providing a basis for inference, which are essential for making valid conclusions at the end
of a trial \citep{RosenbergerEtal2019sim}. Thus, in clinical trials theory and practice, a lot of attention is paid to designs with randomized actions
\citep{RosenbergerLachin2015book}, although it should also be noted that not all types of clinical trials require or allow to implement blinding and
randomization. There are three major approaches to adaptive randomization: (i) \emph{Bayesian Response-Adaptive Randomization}, starting with
\citet{Thompson1933biometrika}, generalized in \citet{ThallWathen2007ejoc} (see also \citet[p. 156]{BerryEtal2011book}), and recently studied also in the
reinforcement learning literature (see, e.g., \citet{AgrawalGoyal2012colt}); (ii) \emph{Frequentist P\'{o}lya Urn Randomization}, which is a randomized
version of the ``stay-with-a-winner \& switch-on-a-loser'' design, starting with \citet{WeiDurham1978jasa}; and (iii) \emph{Randomized Index Rules} such as
those proposed by \citet{Bather1980aap, Bather1981jrssb, AbdelHamid1981thesis} in frequentist setting and by \citet{Glazebrook1980jrssb} in Bayesian
setting.

Note that the term ``bandit'' usually does not appear in the relevant biostatistics literature.

\subsection{Combination Designs}

Many researchers have suggested to use one design for the first $ N $ subjects and use another one for the remaining $ ( T - N ) $ subjects. This idea
appeared implicitly in early papers on sequential design of experiments, considering the \design{1:1} design initially (on a sample of size $ N $), and
then sticking to the arm with higher mean (concluded at a given significance level) forever, since that approach assumes $ T = +\infty $.
\citet{ChengEtal2003biometrika} found that when $ T $ is finite, for this combination design it is optimal for $ N $ to be of the order of $ \sqrt{ T } $,
and, in the particular case of prior Beta distribution with parameters $ ( 1 , 1 ) $ on each arm, the optimal $ N \to \sqrt{ 2 T } $ asymptotically as $ T
\to +\infty $ (and is slightly lower for finite horizons).

\citet{HoelEtal1972biometrika} considered a setting in which $ T $ is not fixed in advance, but the allocation is stopped if a predefined difference in the
number of successes is reached. They proposed to start with the \emph{arm alternating} design (i.e., a deterministic version of \design{1:1}), and then to
switch to the ``stay-with-a-winner \& switch-on-a-loser'' design if the estimate of $ \max \{ \theta_{ C } , \theta_{ D } \}$ is greater than $ 0.6 $ and
continue with arm alternating otherwise. This paper is just one example from the work in the area of \emph{ranking and selection}.

\citet[Section 4]{Zelen1969jasa} proposed to initially use the ``stay-with-a-winner \& switch-on-a-loser'' design, and then to stick to the arm with higher
mean. He found that this is better than the classic combination if $ \theta_{ C } + \theta_{ D } \ge 1 $ and the value of $ N \approx T / 3 $ yields a
nearly-optimal number of successes.

\citet[Remark 4.10]{Kelly1981} elucidated that the optimal Bayesian decision-theoretical design in the discounted setting can be regarded as having three
stages: first, \design{BLFF}, which he refers to as ``information gathering'', then it moves further and further away from that design, and finally it
always allocates to the same arm. \citet[Section 3.4]{Villar2018peis} provided some illustrative numerical evaluation of these stages for the Gittins and
Whittle index rules.

Motivated by the above and by the discussion in \autoref{section:utopic}, in this paper we consider three combination designs in which the designs used are
\design{BLFF} in the first stage, followed by \design{BM}, \design{$ \alpha $UCB} and \design{BMSF}, respectively, and one combination rule in which the
designs used are \design{1:1} in the first stage, followed by \design{BMSF}:
\begin{itemize}

\item \design{BLFF+BM} with $ N = \sqrt{ 4 T } $

\item \design{BLFF+$ \alpha $UCB} with $ N = \sqrt{ 4 T } $

\item \design{BLFF+BMSF} with $ N = \sqrt{ 9 T } $

\item \design{1:1+BMSF} with $ N = \sqrt{ 2 T } $

\end{itemize}

The lengths of the first stage using \design{BLFF} have been obtained heuristically based only on a small numerical study and so they are not necessarily
overall optimal, and surely not optimal for each particular scenario. To the best of our knowledge, these three designs have not been studied previously
and their theoretical analysis is an open problem. Note that using \design{BMSF} in the second stage represents the situation in which the arm with higher
mean (most successes is approximately equivalent to highest mean since \design{BLFF} keeps the number of failures balanced up to a difference of $ 1 $)
after the first stage is allocated to all the subjects in the second stage since only the allocated arm can collect additional successes. This is inspired
by but not fully equivalent to the design studied by \citet[Section 4]{Zelen1969jasa}, because \design{BLFF} is not fully equivalent to
``stay-with-a-winner \& switch-on-a-loser''.

Using \design{BM} in the second stage is similar to \design{BMSF}, but allows for additional learning during the second stage: the mean of the arm with
higher mean at the end of the first stage may eventually decrease below the mean of the other arm, at which moment the allocation switches to that arm
(there may be more than one such ``correction''). The combination design that uses \design{$ \alpha $UCB} in the second stage is included because it has
interesting performance, to be discussed in \autoref{section:performance}.

Design \design{1:1+BMSF} is inspired by \citet{ChengEtal2003biometrika}, although using \design{BMSF} in the second stage is not fully equivalent to
sticking to the arm with higher mean at the end of the first stage, because \design{1:1} may lead to unbalanced allocation due to sampling variability.

\section{Designs Performance}
\label{section:performance}

For some of the response-adaptive designs, theoretical values of or bounds on their performance exist. However, these are either asymptotic as $ T \to
+\infty $, or up to an additive and/or multiplicative constants, which are often large or unknown. For small ($ T \approx 1 : 10^{ 2 } $) and moderate ($ T
\approx 10^{ 2 } : 10^{ 4 } $) horizons, computational evaluation is the most appropriate to illustrate designs performance.

In this section we report computational experiments in which every design is evaluated by backward recursion, i.e. at full computational accuracy (of
Float64, which is of the order of $ 10^{ \log( T ) - 16 } $). For fairness, we only include deterministic designs and \design{1:1} as a benchmark (i.e. we
exclude the biostatistics ones).

To the best of our knowledge, such accurate evaluation has only been reported for small horizons in the existing literature. For moderate horizons, which
are often the most relevant in practice, for instance in clinical trials, designs are usually evaluated by simulation, which is remarkably less accurate.

We present the performance in terms of both the proportion of successes and (equivalently) the regret number of successes, as these provide complementary
insights. See \autoref{section:performance_continued} for small horizons, in which the performance of some of the designs is \emph{fundamentally
different}.

\subsection{Bayesian Performance --- Moderate Trial Sizes}

\autoref{fig:2} illustrates the Bayesian performance of the Bayesian decision-theoretic design for $ T = 120 : 120 : 4440 $. Although it is an interesting
way of summarizing the performance, it has three major drawbacks: (i) it is a simple average over all the possible pairs of parameters $ ( \theta_{ C } ,
\theta_{ D } ) $, while in practice one would be interested in a particular subset of the whole parameter space, possibly weighted in a particular way;
(ii) it presents the Bayesian value in which the observations happen according to the belief at every time epoch rather than being fixed over the whole
horizon, which blurs its interpretation because the beliefs are biased and which is less desirable since in practice one would be interested in the value
under the true (unknown) success probabilities; (iii) it depends on the prior distributions, so a choice needs to be made or it needs to be computed for a
number of different options.

Nevertheless, \autoref{fig:2} is instrumental in giving an idea about the order of magnitude of the objective, and in particular it is interesting to
observe that the regret number of successes is increasing and concave, taking value of around $ 5 $ for the trial size of $ 3000 $ and, by extrapolation,
it is likely to be below one per mille of the trial sizes beyond $ 10^{ 4 } $.

\begin{figure}[tbp]
\centering
    \begin{subfigure}[b]{0.48\textwidth}
        \includegraphics[trim=0pt 0pt 0pt 0pt, clip=true, width=\textwidth]{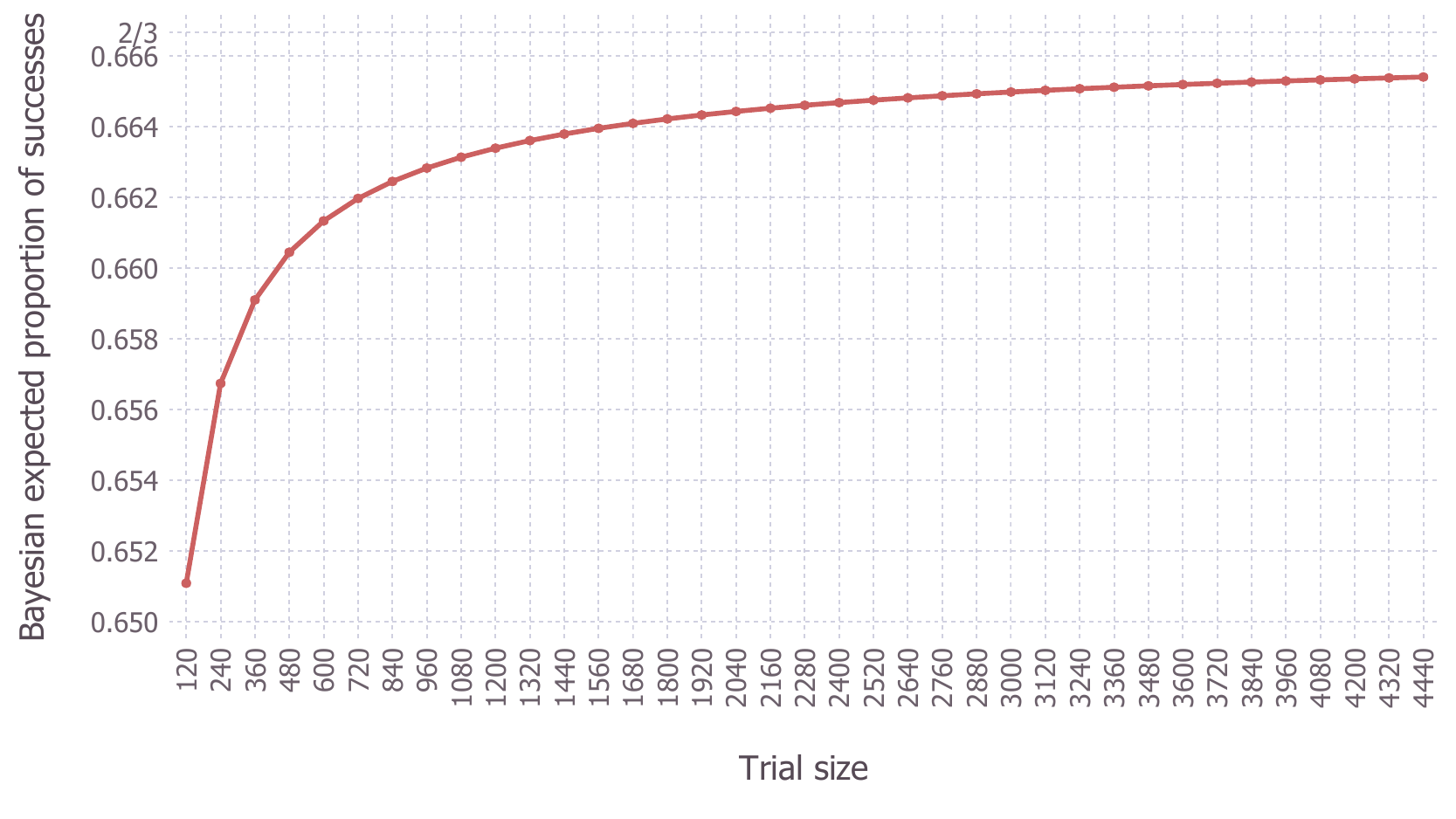}
    \end{subfigure}\hfill%
    \begin{subfigure}[b]{0.48\textwidth}
        \includegraphics[trim=0pt 0pt 0pt 0pt, clip=true, width=\textwidth]{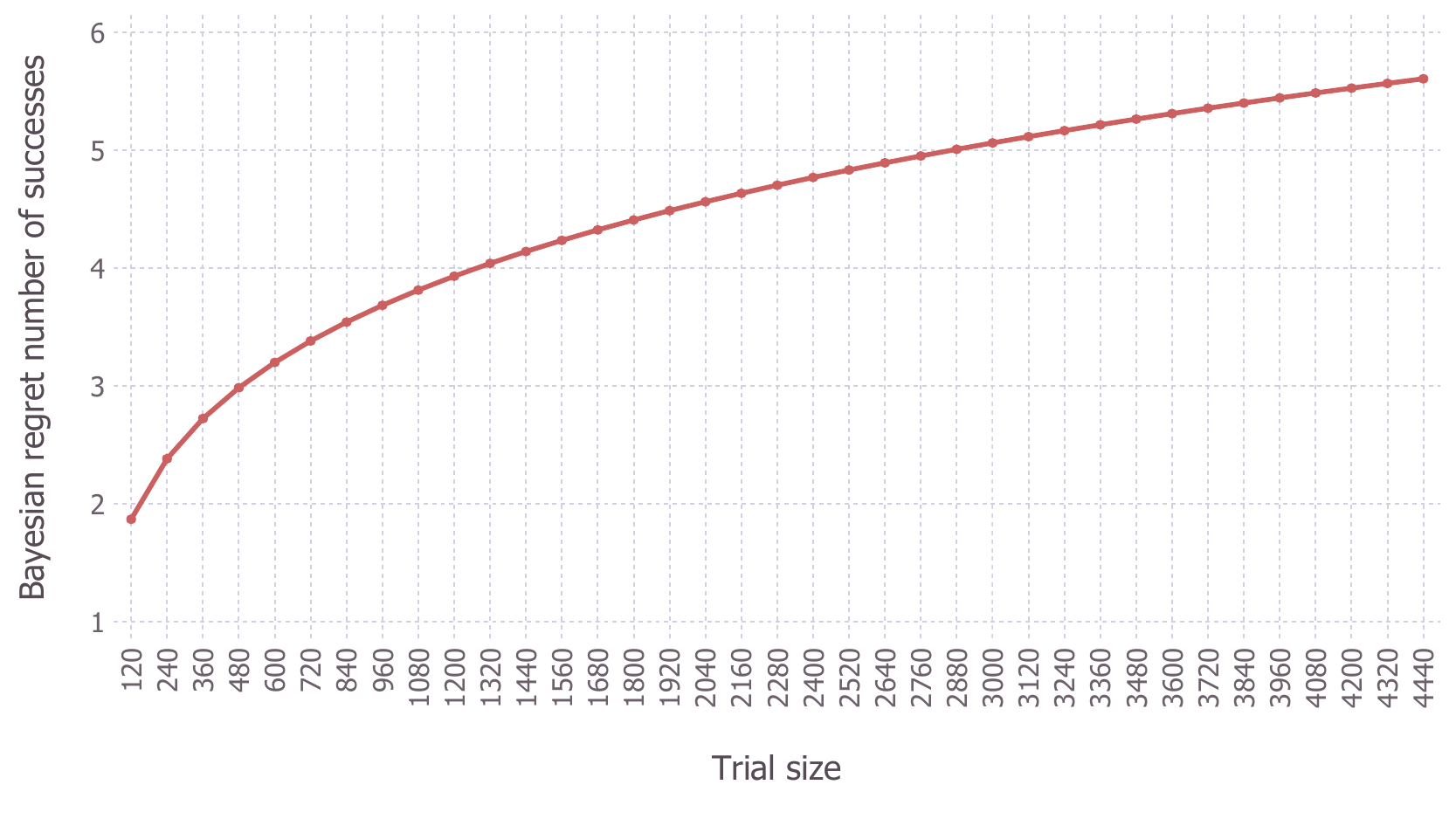}
    \end{subfigure}%
\caption{An illustration of two (equivalent) Bayesian performance measures evaluated for the deterministic Bayesian decision-theoretic design over a range of moderate trial sizes.}\label{fig:2}
\end{figure}

\subsection{Frequentist Performance --- Moderate Trial Sizes}

In this subsection we evaluate and compare the frequentist performance of the above designs for for $ T = 120 : 60 : 1200 \cup 1200 : 300 : 2400 $. We do
so in four scenarios: $ ( \theta_{ C } , \theta_{ D } ) \in \{ ( 0.1 , 0.3 ) , ( 0.3 , 0.5 ) , ( 0.5 , 0.7 ) , ( 0.7 , 0.9 ) \} $. These scenarios are a
natural choice, and have been previously considered, e.g., $ ( 0.3 , 0.5 ) $ in \citet[Table 3]{Lai1987as}, \citet[Table 2]{BrezziLai2002jedc},
\citet[Table 5]{VillarEtal2015survey} and \citet[Table 6]{Villar2018peis}, $ ( 0.7 , 0.9 ) $ in \citet[Table 3]{Lai1987as}, and all of them in
\citet{HoelEtal1972biometrika}. Although conclusions from a number scenarios do not guarantee their validity in other scenarios, we have found these four
scenarios to be illustrative enough to provide negative conclusions for many designs.

We report both the proportion of successes, in \autoref{fig:PS_2400}, and the regret number of successes, in \autoref{fig:regret_2400}. Besides the mean,
for each measure we report also the standard deviation, which can be considered as a secondary criterion providing additional insights into the performance
of the designs.

\design{1:1} is the worst design in terms of the mean and often the best in terms of the SD but, surprisingly, it is not always the best, meaning that in
some scenarios there are designs that are better under both measures (all of these are combination designs, to be discussed below). \design{BLFF} notably
improves the mean, increasingly so in the scenarios with higher success probabilities, while only marginally deteriorates the SD. They both over-explore.

The curves of \design{BMSF}, \design{FM}, \design{BM}, \design{BGDF} and \design{BKG} look approximately linear in both the mean and SD of the regret
number of successes (approximately constant in both the mean and SD of the proportion of successes). The performance of \design{BGDF} and \design{BKG} is
quite bad in general and heavily scenario dependent. \design{BGDF}'s mean deteriorates with more extreme success probabilities, but its SD improves with
lower success probabilities. \design{BKG}'s mean and SD both improve with lower success probabilities, remarkably so in scenario $ ( 0.1 , 0.3 ) $, in
which the mean regret is the best of all designs and seems to be constant, while all the other designs' mean regret is increasing.

There are three designs that perform particularly badly in all four scenarios: \design{BMSF}, \design{FM} and \design{BM} are dominated by almost all the
other designs. High SD indicates that these three designs are extremely \emph{under-exploring}: too aggressively sticking to one of the arms, and
relatively often choosing the worse one. Among these three, \design{BMSF} is worse than the other two in both the mean and SD in all four scenarios. The
performance order of \design{FM} and \design{BM} depends on scenario; \design{BM}'s mean remarkably improves with lower success probabilities, while
\design{FM}'s mean improves with more extreme success probabilities. For these three designs it also holds that the better the mean, the better the SD, so
in every scenario the order is identical under both measures.

The curves for the mean of all the remaining designs (\design{DP}, \design{$ \alpha $UCB}, and the combination designs) look approximately concave, leading
to significant improvement in the mean. Quantitatively, all three versions of \design{$ \alpha $UCB} are almost identical across the four scenarios,
indicating that its performance depends on the difference $ \theta_{ C } - \theta_{ D } $ rather than on their respective values. Interestingly, the mean
regret number of successes of \design{2UCB} is more than $ 50\% $ higher than that of \design{1UCB}, which is in turn around $ 3 $ times higher than that
of \design{0.18UCB}, which is in turn still distinctively higher (between $ 20\% - 100\% $, depending on scenario) than that of \design{DP} which is the
best performing design except when dominated by \design{BKG} in scenario $ ( 0.1 , 0.3 ) $. The mean regret of \design{DP} in scenario $ ( 0.7 , 0.9 ) $
seems to be constant, while all the other designs' mean regret is increasing. The SD of \design{2UCB} and \design{1UCB} is practically undistinguishable,
while the SD of \design{0.18UCB} is notably higher in the scenarios with higher success probabilities, with the SD of \design{DP} being in between.

The combination designs bring some surprising results. First, \design{BLFF+2UCB} is practically identical to \design{2UCB} and \design{BLFF+1UCB} to
\design{1UCB} in all the scenarios and trial sizes, indicating that \design{$ \alpha $UCB} initially behaves essentially as \design{BLFF}. That is however
not true for \design{BLFF+0.18UCB}, which is similar but not identical to \design{0.18UCB}: in terms of the mean being better in scenarios with higher
success probabilities while being worse in those with lower success probabilities. The SD of \design{BLFF+0.18UCB} is always lower than that of
\design{0.18UCB}, and in particular it is the lowest of all designs except when dominated by \design{1:1} and \design{BLFF} in scenarios with low success
probabilities.

Somewhat surprisingly, \design{BLFF+BM} performs quite well, roughly similarly to \design{BLFF+0.18UCB} in terms of the mean, although its SD is sometimes
higher. \design{BLFF+BMSF} performs worse than \design{BLFF+BM} but still significantly better than \design{1UCB} in terms of the mean, but its SD is
notably higher. \design{1:1+BMSF} is in most instances significantly worse than \design{BLFF+BMSF} both in terms of the mean and SD.

We summarize the above discussion as follows:
\begin{itemize}

  \item \design{DP} may be the preferred design in terms of the mean in scenarios with high and moderate success probabilities;

  \item \design{BKG} may be the preferred design in terms of the mean in scenarios with low success probabilities;

  \item \design{BLFF+0.18UCB} may be the overall preferred design in terms of both the mean (higher weight) and the SD (lower weight);

  \item \design{BLFF} may be the overall preferred design in terms of both the mean (lower weight) and the SD (higher weight);

  \item \design{1:1} may be the preferred design in terms of the SD in scenarios with low and moderate success probabilities;

  \item \design{BLFF+0.18UCB} may be the preferred design in terms of the SD in scenarios with high success probabilities;


\end{itemize}

\begin{figure}[tbp]
\centering
    \begin{subfigure}[b]{0.48\textwidth}
        \includegraphics[trim=0pt 0pt 0pt 0pt, clip=true, width=\textwidth]{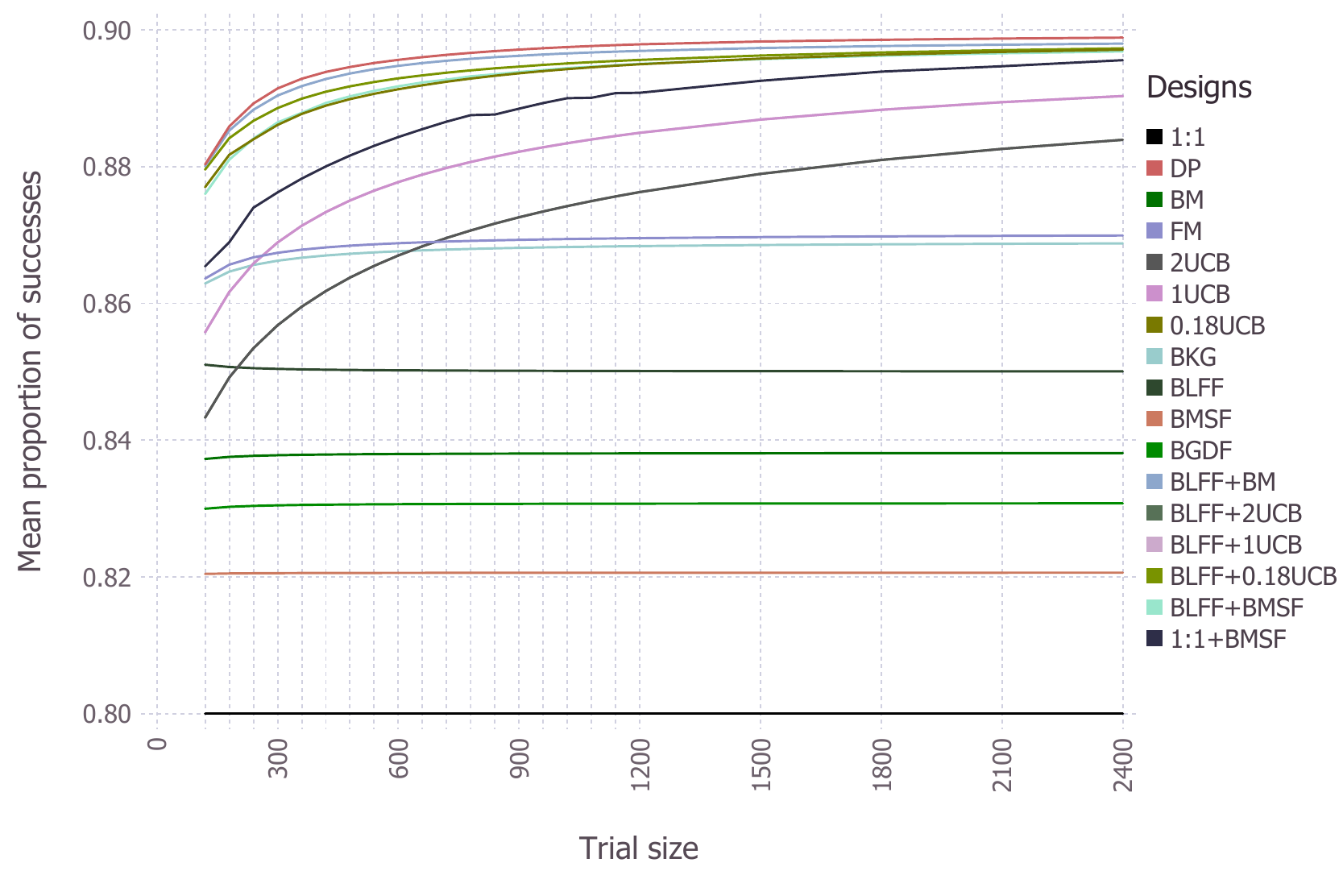}
    \end{subfigure}\hfill%
    \begin{subfigure}[b]{0.48\textwidth}
        \includegraphics[trim=0pt 0pt 0pt 0pt, clip=true, width=\textwidth]{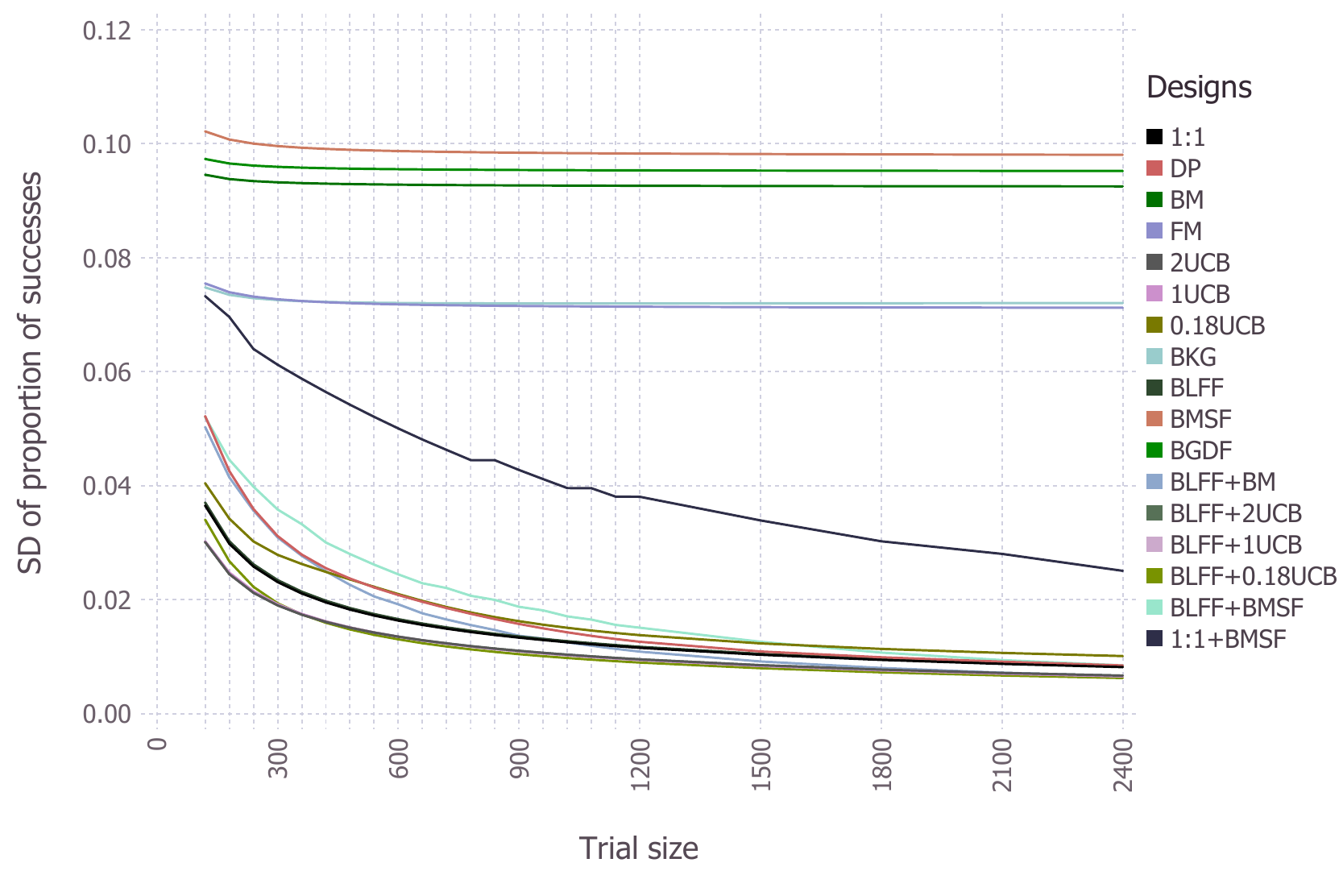}
    \end{subfigure}%

    \begin{subfigure}[b]{0.48\textwidth}
        \includegraphics[trim=0pt 0pt 0pt 0pt, clip=true, width=\textwidth]{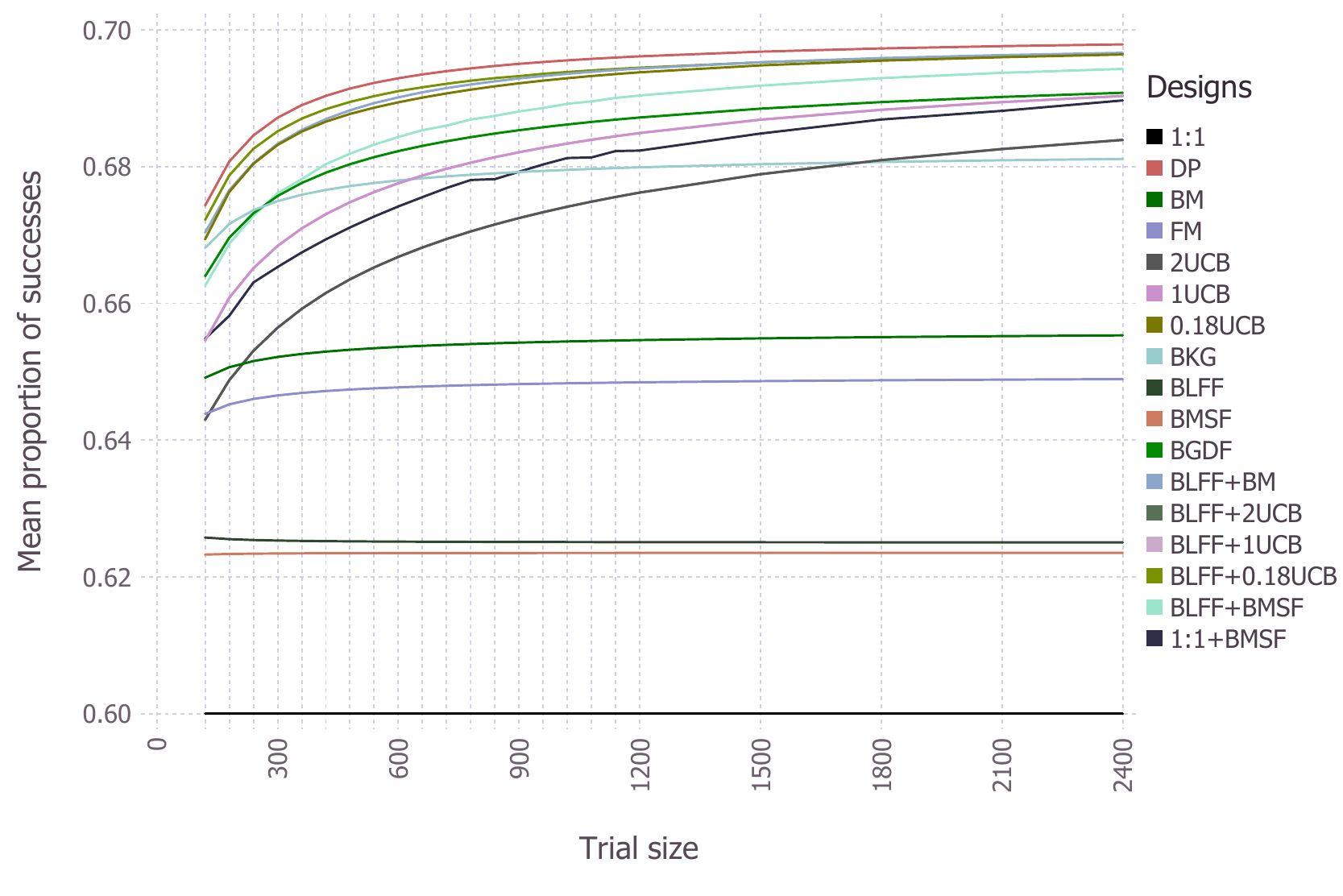}
    \end{subfigure}\hfill%
    \begin{subfigure}[b]{0.48\textwidth}
        \includegraphics[trim=0pt 0pt 0pt 0pt, clip=true, width=\textwidth]{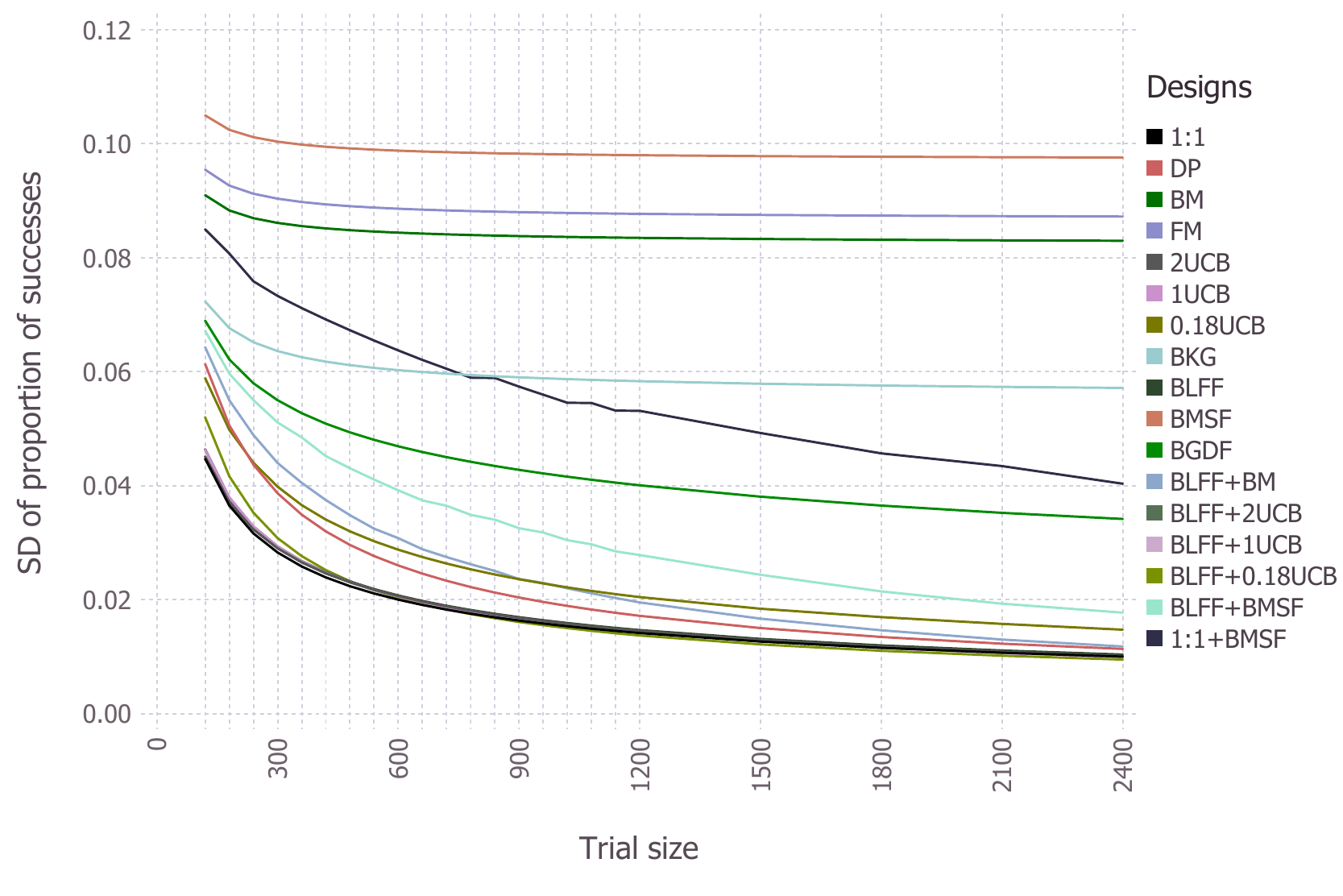}
    \end{subfigure}%

    \begin{subfigure}[b]{0.48\textwidth}
        \includegraphics[trim=0pt 0pt 0pt 0pt, clip=true, width=\textwidth]{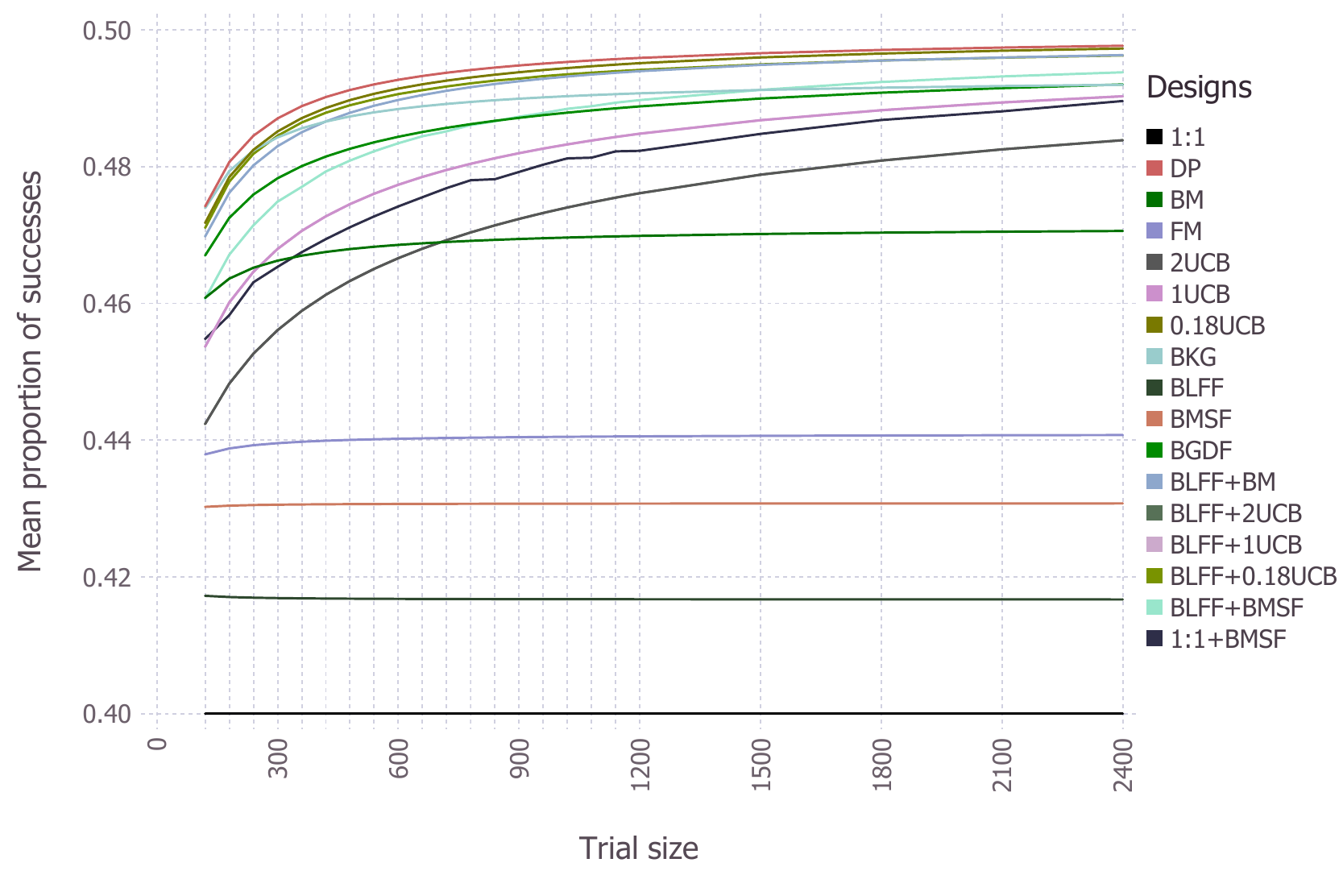}
    \end{subfigure}\hfill%
    \begin{subfigure}[b]{0.48\textwidth}
        \includegraphics[trim=0pt 0pt 0pt 0pt, clip=true, width=\textwidth]{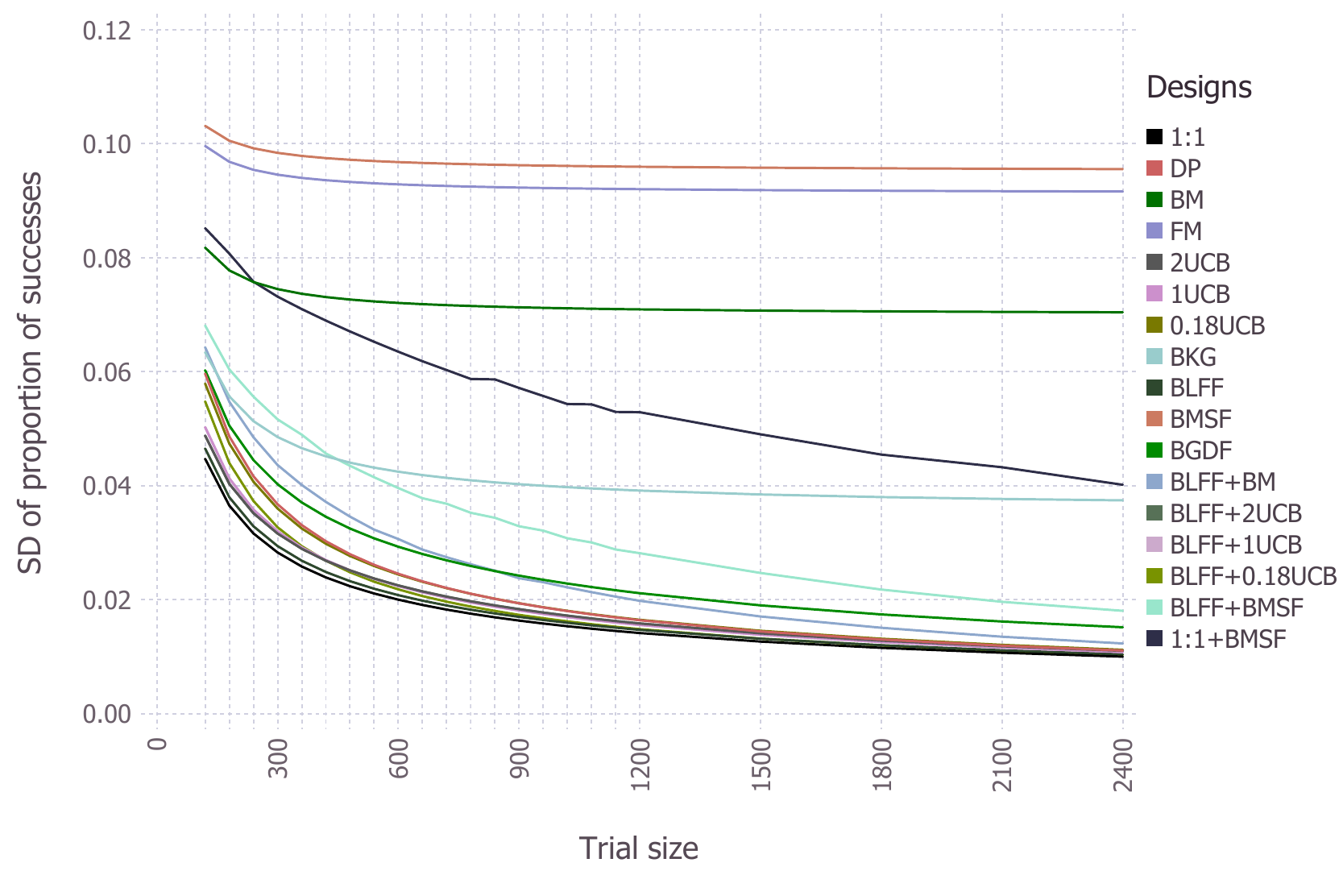}
    \end{subfigure}%

    \begin{subfigure}[b]{0.48\textwidth}
        \includegraphics[trim=0pt 0pt 0pt 0pt, clip=true, width=\textwidth]{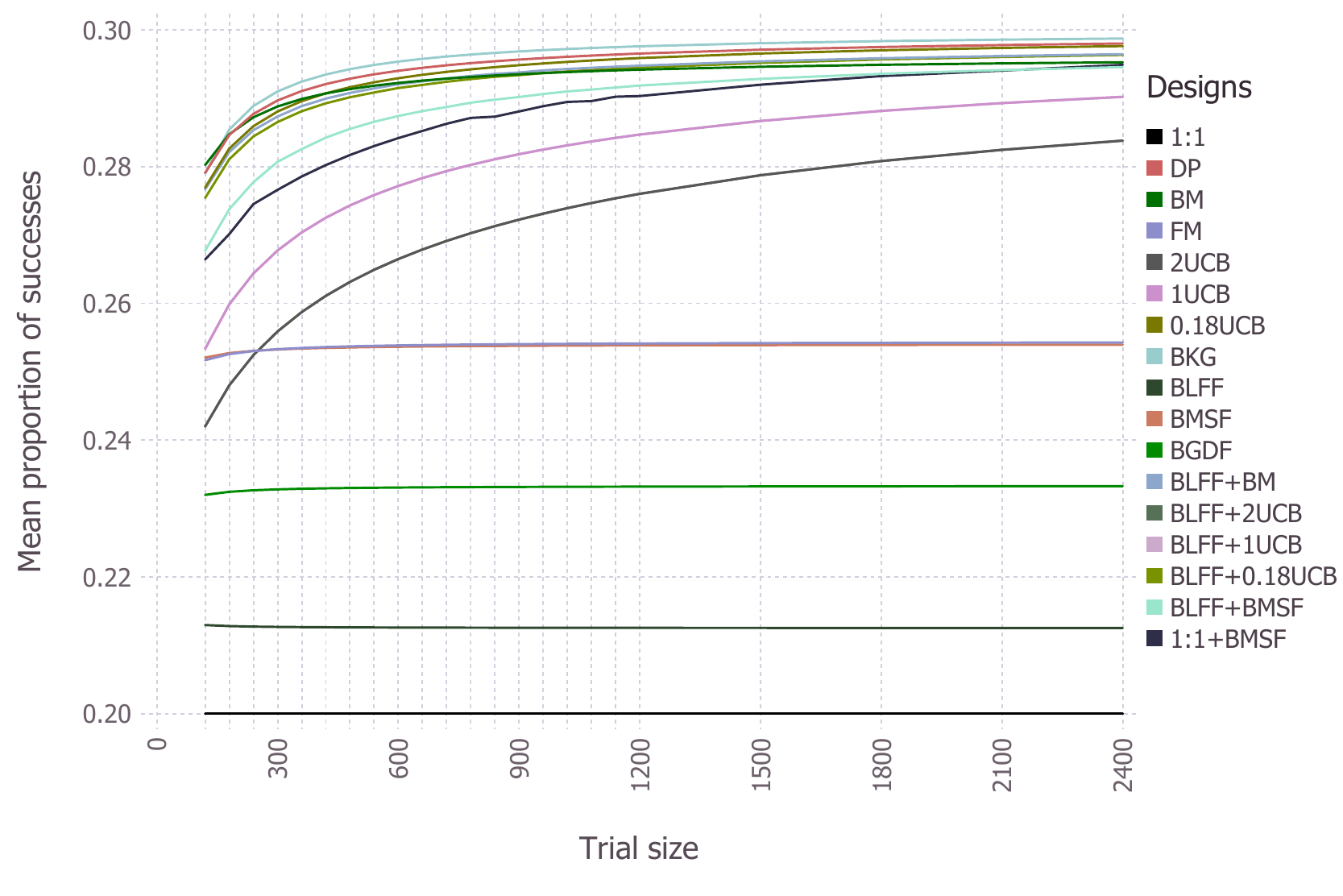}
    \end{subfigure}\hfill%
    \begin{subfigure}[b]{0.48\textwidth}
        \includegraphics[trim=0pt 0pt 0pt 0pt, clip=true, width=\textwidth]{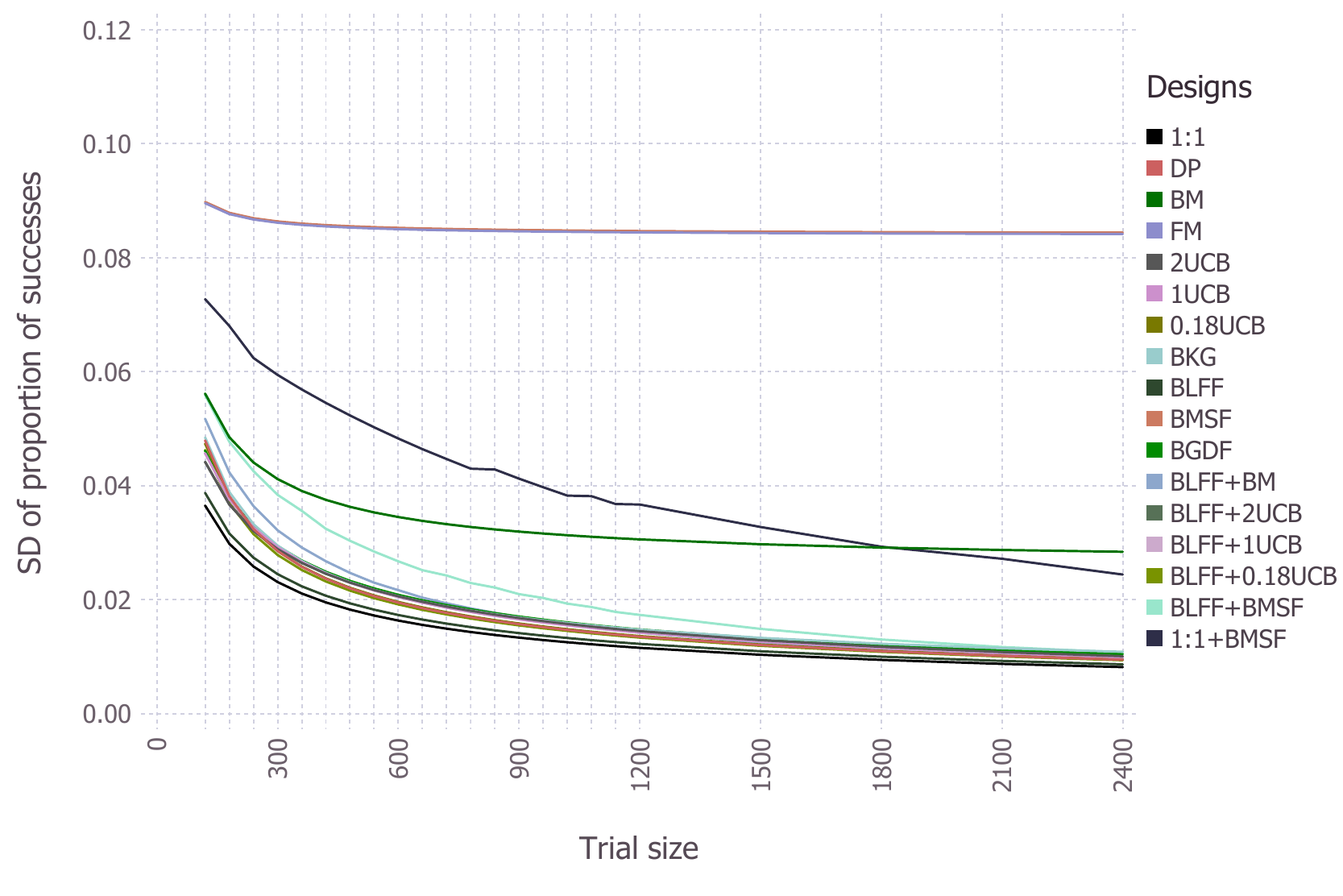}
    \end{subfigure}%
\caption{An illustration of performance (mean on the left, standard deviation on the right) in terms of the expected proportion of successes evaluated for
deterministic designs over a range of moderate trial sizes, for $ ( \theta_{ C } , \theta_{ D } ) = ( 0.7 , 0.9 ) $ in the first row, $ ( 0.5 , 0.7 ) $ in
the second row, $ ( 0.3 , 0.5 ) $ in the third row, $ ( 0.1 , 0.3 ) $ in the fourth row.}\label{fig:PS_2400}
\end{figure}

\begin{figure}[tbp]
\centering
    \begin{subfigure}[b]{0.48\textwidth}
        \includegraphics[trim=0pt 0pt 0pt 0pt, clip=true, width=\textwidth]{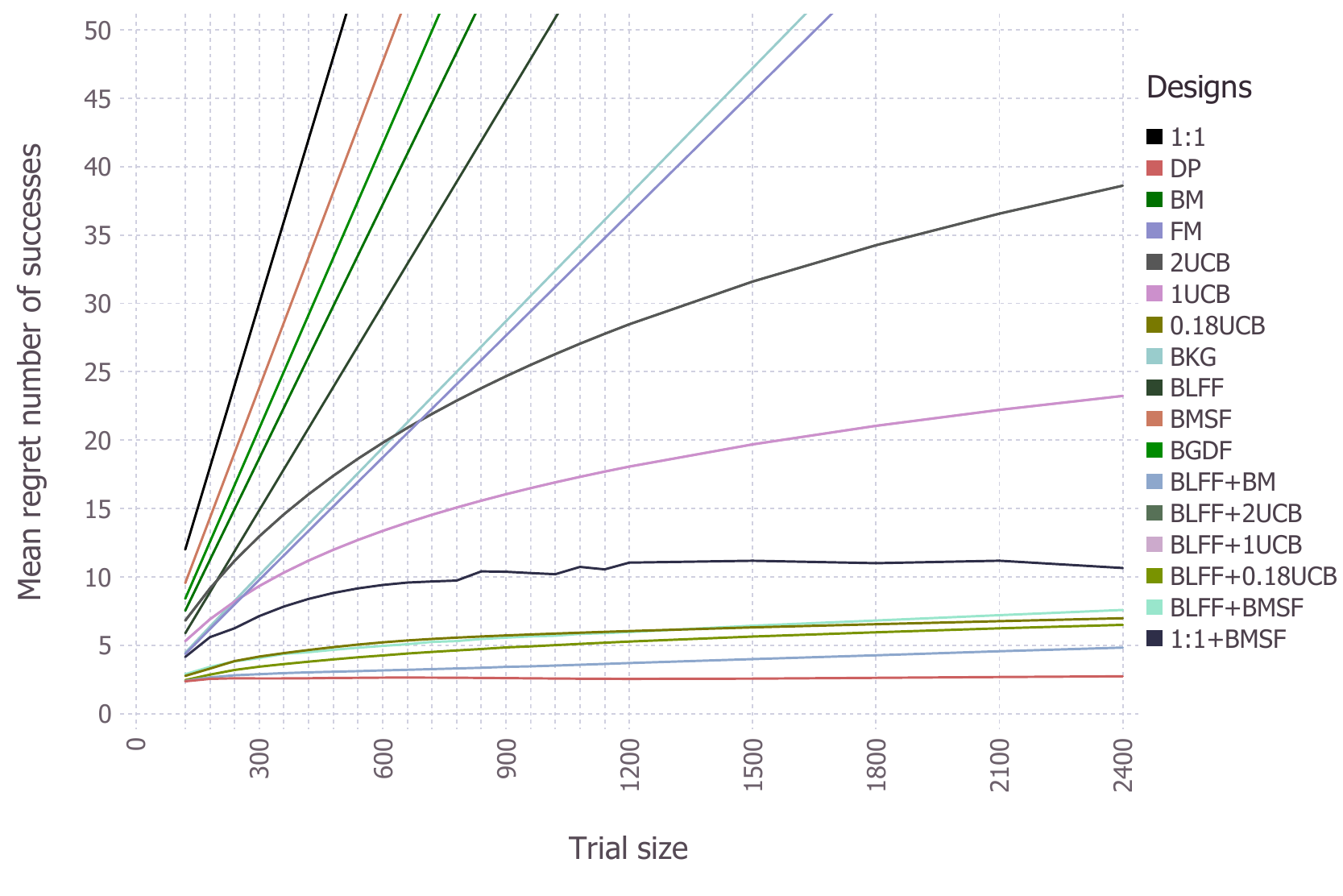}
    \end{subfigure}\hfill%
    \begin{subfigure}[b]{0.48\textwidth}
        \includegraphics[trim=0pt 0pt 0pt 0pt, clip=true, width=\textwidth]{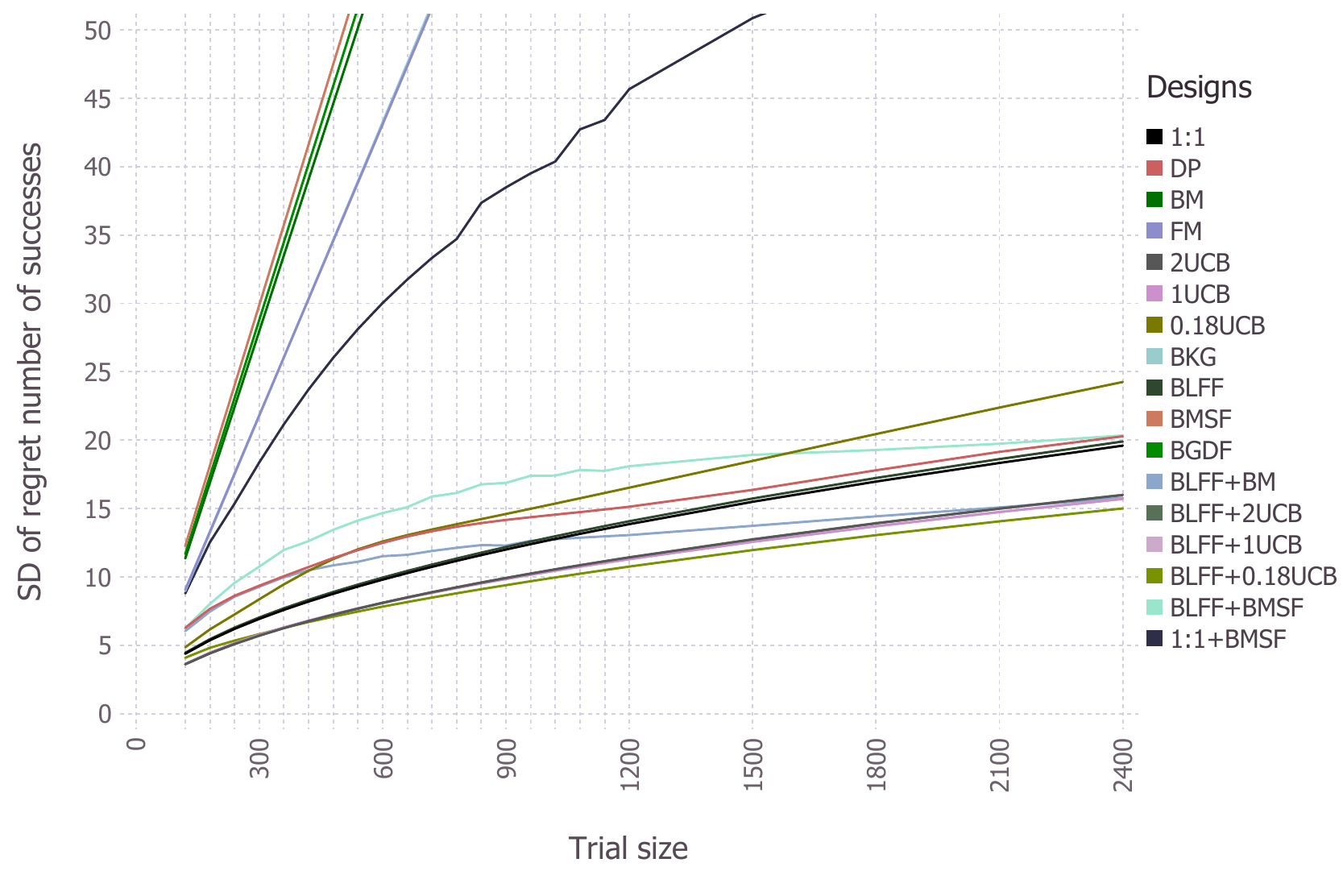}
    \end{subfigure}%

    \begin{subfigure}[b]{0.48\textwidth}
        \includegraphics[trim=0pt 0pt 0pt 0pt, clip=true, width=\textwidth]{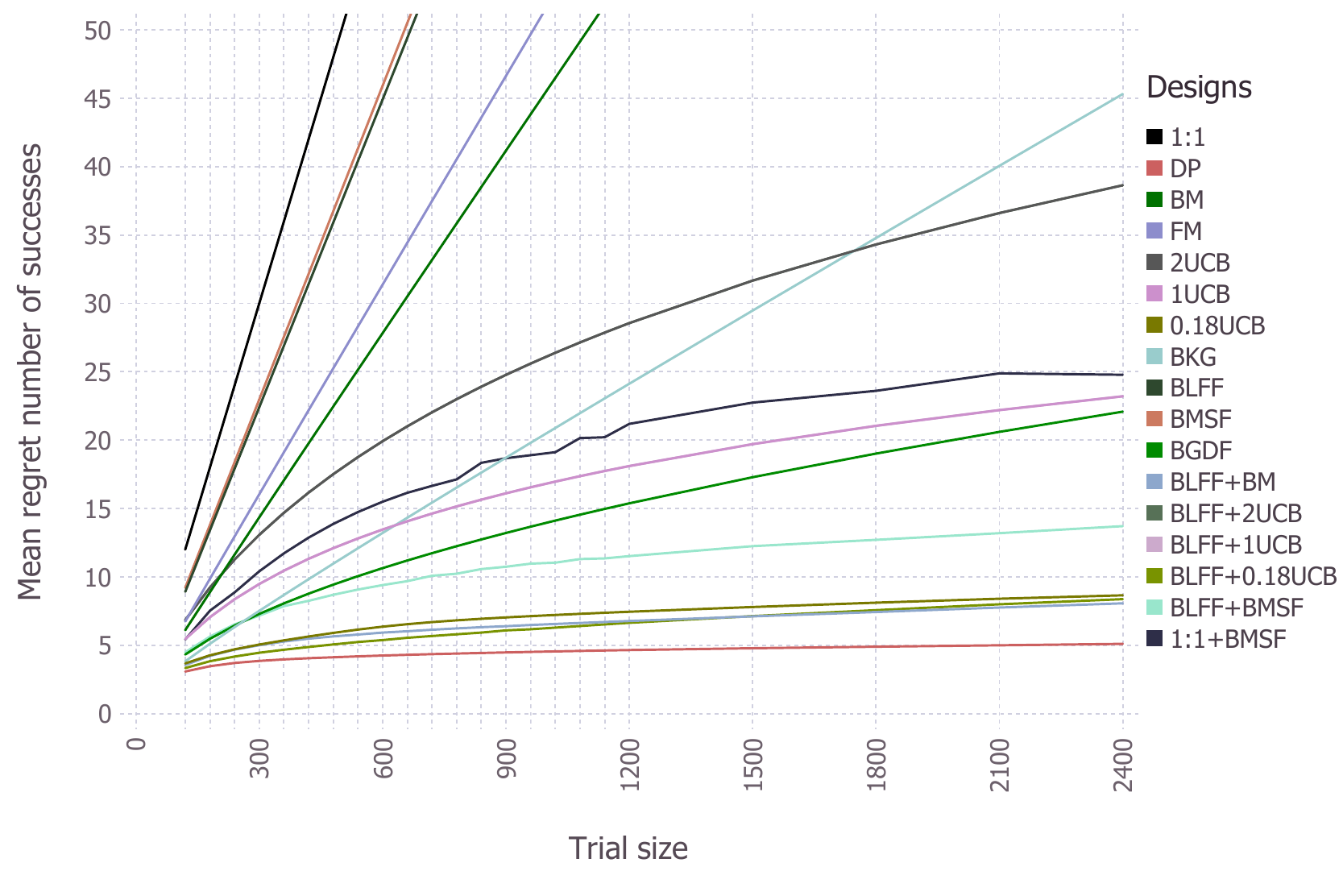}
    \end{subfigure}\hfill%
    \begin{subfigure}[b]{0.48\textwidth}
        \includegraphics[trim=0pt 0pt 0pt 0pt, clip=true, width=\textwidth]{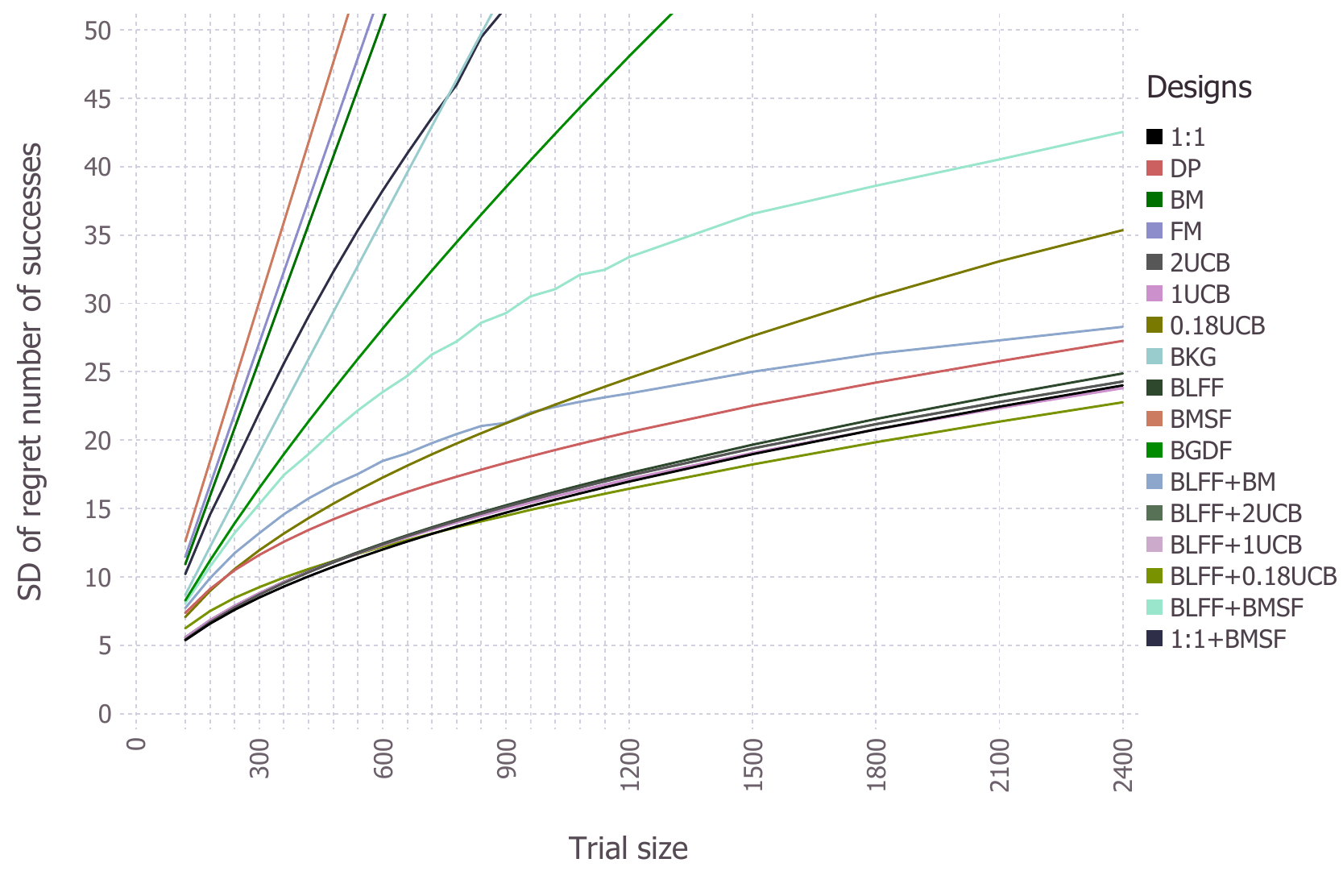}
    \end{subfigure}%

    \begin{subfigure}[b]{0.48\textwidth}
        \includegraphics[trim=0pt 0pt 0pt 0pt, clip=true, width=\textwidth]{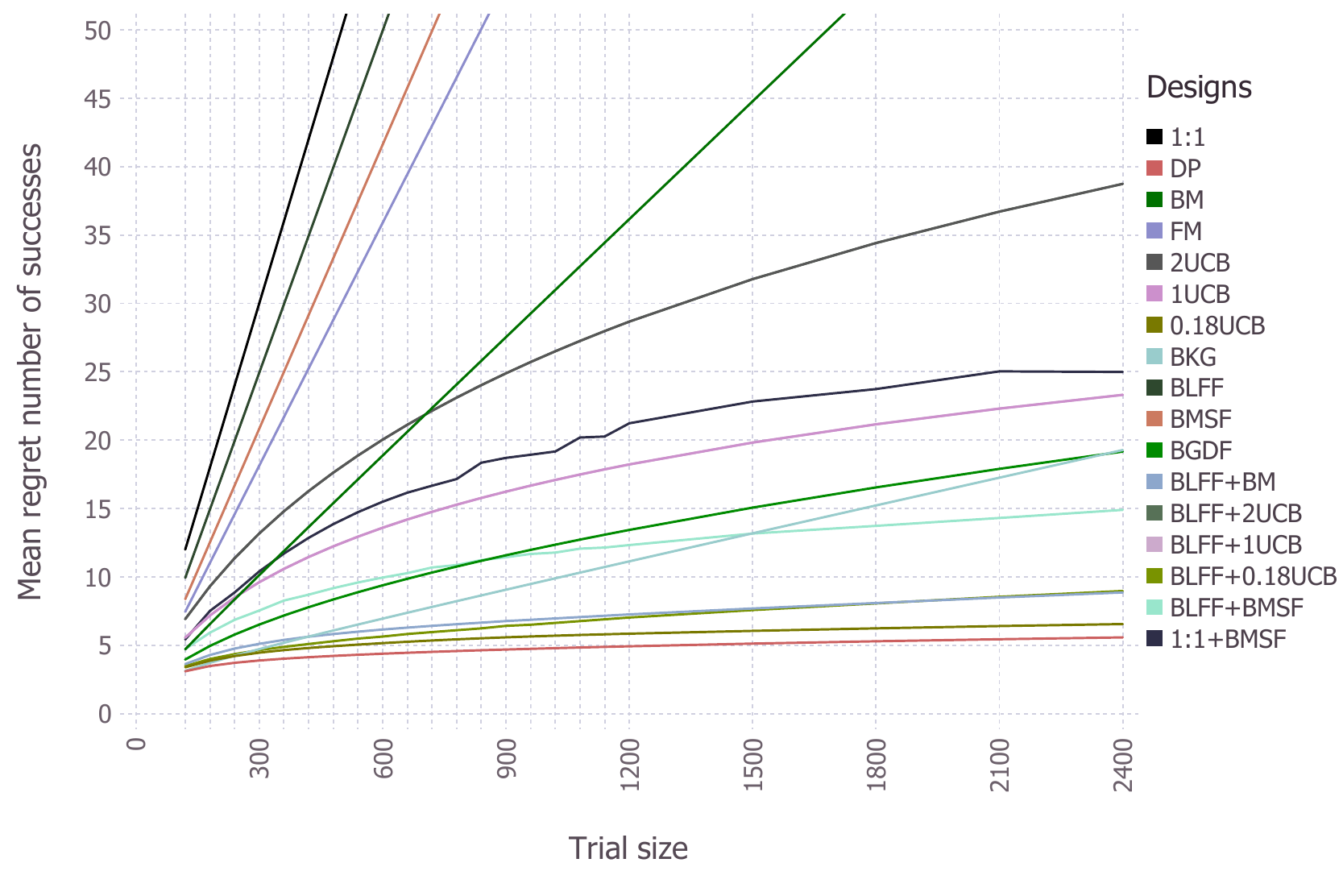}
    \end{subfigure}\hfill%
    \begin{subfigure}[b]{0.48\textwidth}
        \includegraphics[trim=0pt 0pt 0pt 0pt, clip=true, width=\textwidth]{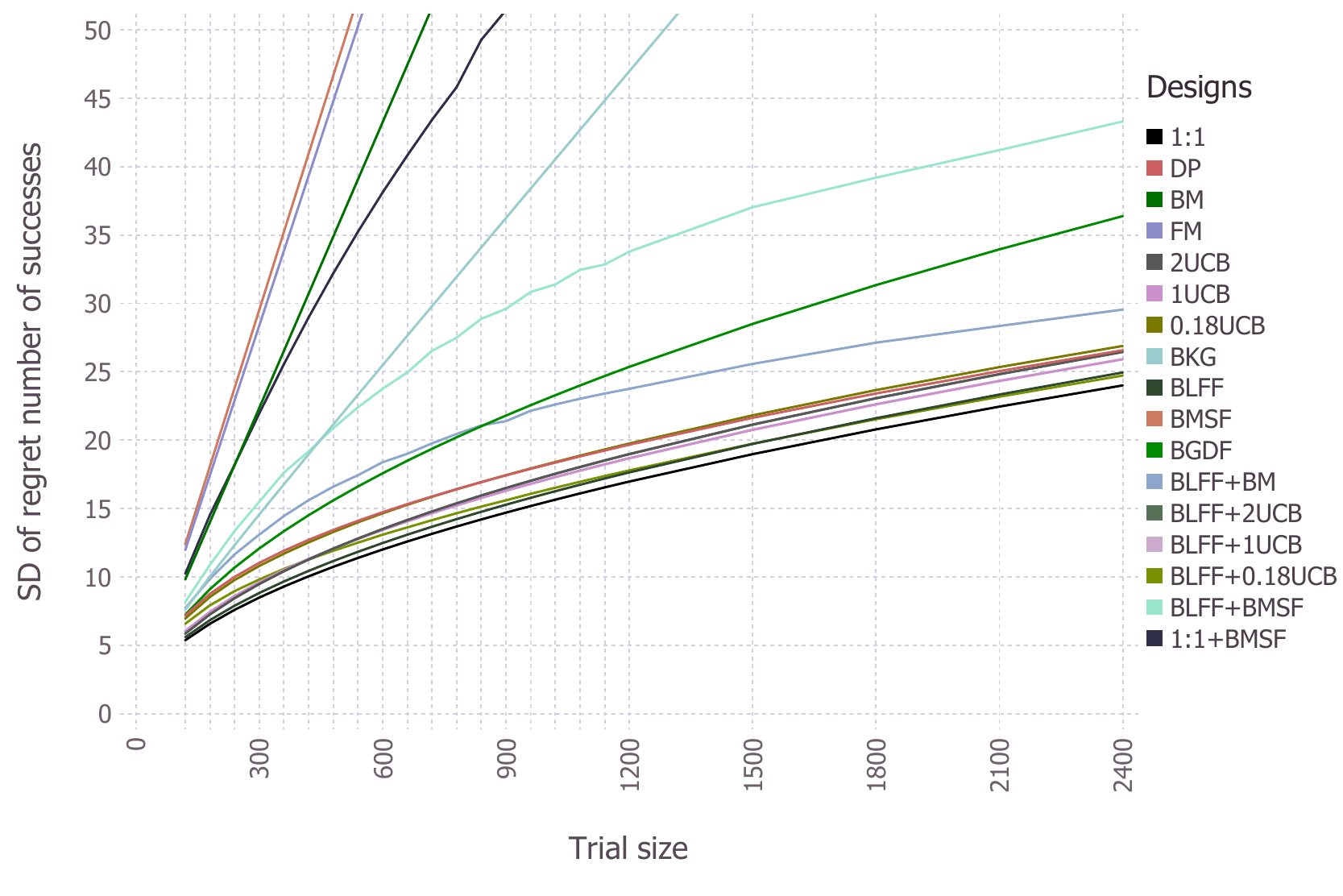}
    \end{subfigure}%

    \begin{subfigure}[b]{0.48\textwidth}
        \includegraphics[trim=0pt 0pt 0pt 0pt, clip=true, width=\textwidth]{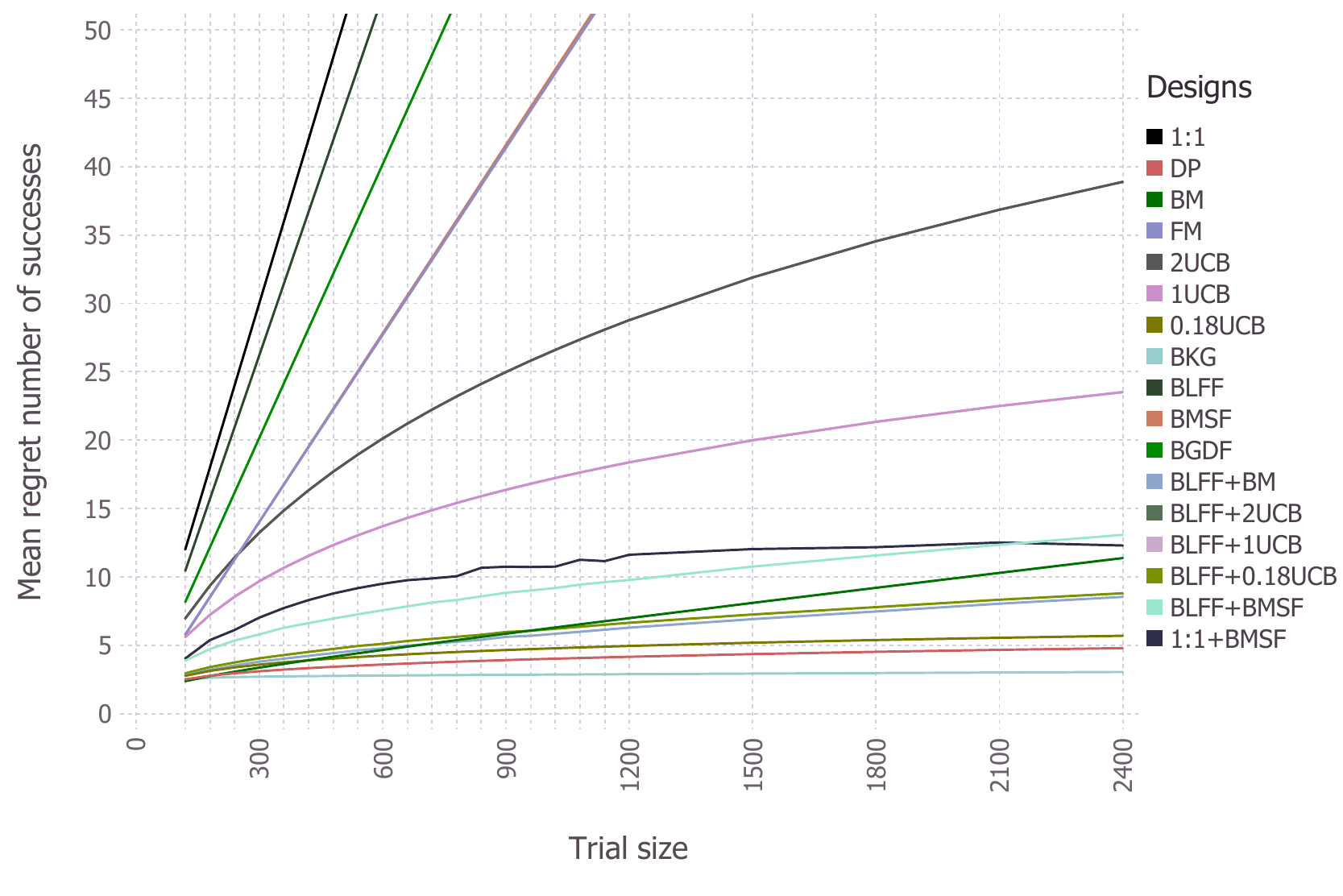}
    \end{subfigure}\hfill%
    \begin{subfigure}[b]{0.48\textwidth}
        \includegraphics[trim=0pt 0pt 0pt 0pt, clip=true, width=\textwidth]{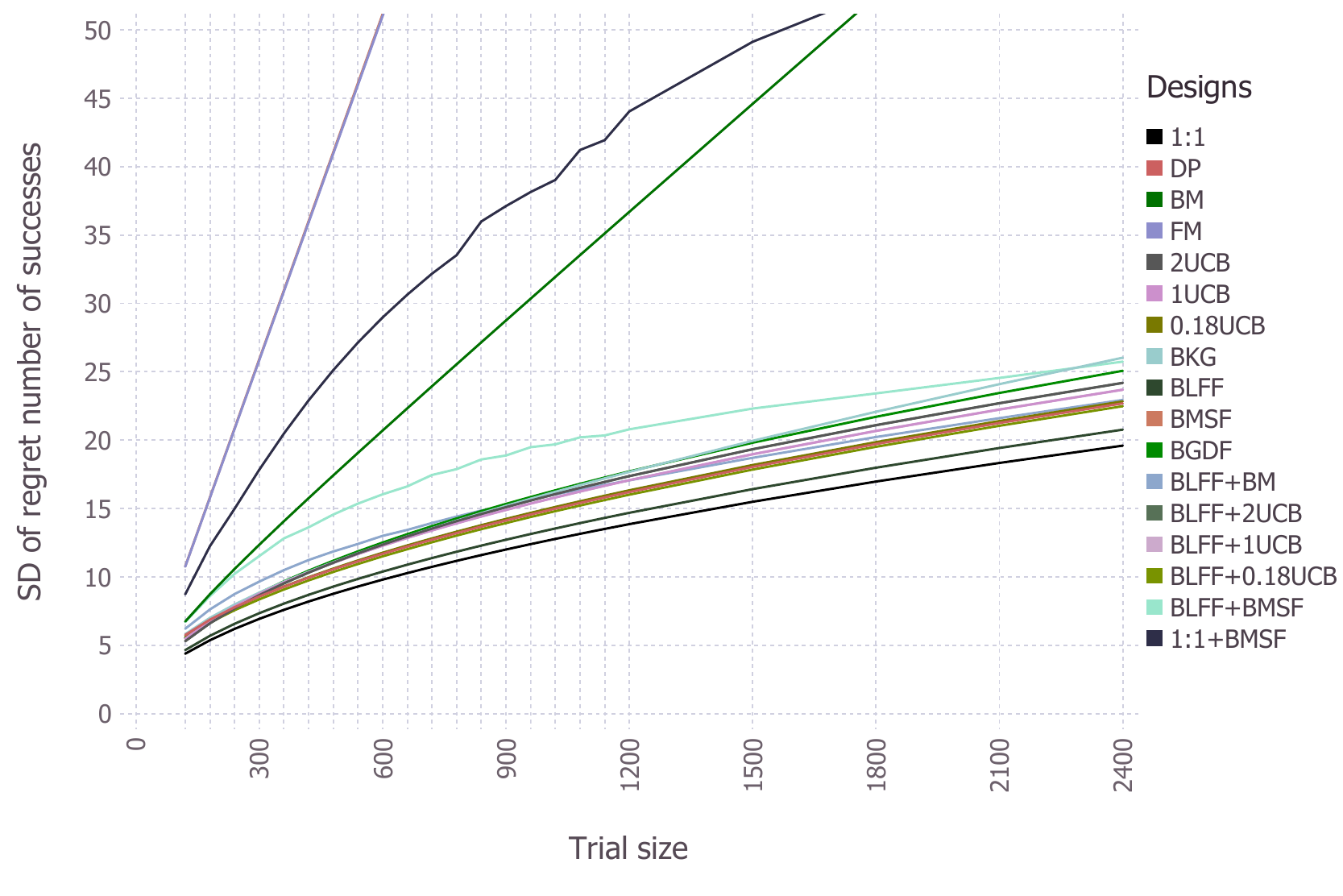}
    \end{subfigure}%
\caption{An illustration of performance (mean on the left, standard deviation on the right) in terms of the regret number of successes evaluated for
deterministic designs over a range of moderate trial sizes, for $ ( \theta_{ C } , \theta_{ D } ) = ( 0.7 , 0.9 ) $ in the first row, $ ( 0.5 , 0.7 ) $ in
the second row, $ ( 0.3 , 0.5 ) $ in the third row, $ ( 0.1 , 0.3 ) $ in the fourth row.}\label{fig:regret_2400}
\end{figure}

\section{Myths}
\label{section:myths}

The hardness of the problem and the diversity of approaches mean that it is difficult for a single person to have complete information and full
understanding of the many details. This paper is, to the best of our knowledge, first attempt to survey key ideas from multiple disciplines. Author's
engagement with researchers and practitioners from a number of fields has given him an impression of common beliefs that may not be fully correct, listed
next.

\subsection{Myth \#1: The Bayesian Decision-Theoretic Design is Intractable}

This myth seems to be widespread across disciplines, e.g., ``The curse of dimensionality renders exact approaches impractical.''
\citep{AhujaBirge2019report}; ``Even in relatively benign setups the computation of the Bayesian optimal policy appears hopelessly intractable.''
\citep[Section 35.1]{LattimoreSzepesvari2019book}; ``Trials of a size up to 3500 patients would be feasible with today’s number \#1 supercomputer (with 1.3
PB of RAM).'' \citep{WilliamsonEtal2017csda}.

\autoref{table:DP_horizons} illustrates the evolution of what was reported as computationally tractable in the literature. Indeed, besides the two results
in the 1960s that pre-date the era of personal computers, the existing literature gives an impression that solving the problem beyond $ T = 100 $ is
impractical or unfeasible. A closer look however reveals that no improvement seem to have been achieved since \citet{Berry1978jasa} which appeared more
than 40 years ago, despite the theoretical progress in computer science and technological progress in personal computers. In this paper we use a
state-of-the-art package in Julia programming language written by the author, and show that on a standard laptop or desktop computer, the \design{DP}
design can be computed for offline use or evaluated in online fashion in a few minutes ($ T \approx 1,000 $), in a few hours ($ T \approx 2,000 $), or in a
few days ($ T \approx 4,000 $). 32GB  of RAM allows storing (e.g. for offline use) of the whole design up to trial size around $ 1440 $; when its storing
is not needed (e.g. for Bayesian evaluation or for calculation of the initial action) it allows up to trial size around $ 4440 $. \footnote{The first
version of the package is planned to be released to public in mid-2019. The package will be described in a separate paper, whose abstract has been accepted
by the editors of the special issue on Bayesian Statistics of the Journal of Statistical Software, and will be submitted by its deadline 30 June 2019.}

\begin{table}[tbp]
\centering
\begin{tabular}{llll}
\toprule
Publication                     & $ T $     & $ T^{ \max } $   & SW, HW, RAM \\
\midrule
\citet{Steck1964moc}            & $ 25 $    & N/A       & N/A, UNIVAC 1105, 54 kB \\ 
\citet{Yakowitz1969book}        & $ 5 $     & N/A       & Fortran, N/A, N/A \\
\citet{Berry1978jasa}           & $ 100 $   & N/A       & Basic (?), Atari (?), N/A \\
\citet{GinebraClayton1999jspi}  & $ 150 $   & $ 180 $   & N/A, N/A, N/A \\
\citet{HardwickEtal2006}        & $ 100 $   & $ 200 $   & N/A, N/A, N/A \\ 
\citet{AhujaBirge2016ejor}      & $ 96 $    & $ 240 $   & N/A, Mac 4GB \\ 
\citet{WilliamsonEtal2017csda}  & $ 100 $   & $ 215 $   & R, PC, 16GB \\
\citet{Villar2018peis}          & $ 100 $   & N/A       & Matlab, PC, N/A \\
\citet{Kaufmann2018aos}         & $ 70 $    & N/A       & N/A, N/A, N/A \\
This paper                      & $ 4440 $  & $ 4440 $  & Julia 1.0.1 \& BB, PC, 32GB \\
\bottomrule
\end{tabular}
\caption{Horizons $ T $ with reported results and $ T^{ \max } $ reported as the largest computationally tractable on a standard computer by backward recursion in the literature on two-armed problem with binary responses.}\label{table:DP_horizons}
\end{table}

\subsection{Myth \#2: The Bayesian Decision-Theoretic Design is Optimal}

The \design{DP} is Bayes-optimal, but that does not necessarily mean that it is optimal in the non-Bayesian objective, i.e. that it achieves the largest
possible mean number of successes. \autoref{fig:regret_2400} and \autoref{fig:regret_240} clearly show that other designs perform better in some scenarios.

\subsection{Myth \#3: The Gittins Index Rule Leads to Incomplete Learning}

According to \citet{BrezziLai2000econometrika}: ``...we give in Section 3 a simple proof of the incompleteness of optimal learning from endogenous data in
the discounted multi-armed bandit problem... This generalizes \citet{Rothschild1974jet}'s result for Bernoulli two-armed bandits, and also the result of
\citet{BanksSundaram1992econometrika} who show that there is positive probability of incomplete learning in multi-armed bandits with general distributions
of rewards if the priors have finite support.'' This is however true only in the discounted setting; \citet{Kelly1981} proved that the structure of the
Gittins index rule in the undiscounted setting (see \autoref{section:utopic}) leads to complete learning.

\subsection{Myth \#4: The Gittins Index is Computationally Simpler than Dynamic Programming for the Two-Armed Problem}

As we explain in \autoref{section:Gittins}, there is a trade-off between accuracy and computational complexity of the Gittins index. For computing the
Gittins index, all the algorithms use a truncation of the horizon, and due to numerical instability most of the numerical algorithms use a discount factor
strictly lower than one although the appropriate one would be equal to one. So, there are three levels of approximation that the Gittins index requires,
and such an approximate Gittins index rule may in the end not even perform better than the Myopic index rule (see e.g. \citet{AhujaBirge2016ejor}).

\subsection{Myth \#5: The UCB Index Rule is Near-Optimal for the Finite-Horizon Problem}

\autoref{section:performance} illustrates that the \design{2UCB} and \design{1UCB} designs are significantly suboptimal, yielding five to tenfold mean
regret number of successes than the optimal design. Tuning the coefficient even beyond the intervals required by theoretical analysis can significantly
improve the performance, e.g. \design{0.18UCB} yields around twofold mean regret number of successes than the optimal design, which is still too large to
be considered near-optimal.

\subsection{Myth \#6: The Frequentist Mean Regret Number of Successes is Increasing}

\citet{LaiRobbins1985aam} presented a lower bound, valid under certain technical assumptions, indicating that the frequentist mean regret number of
successes increases proportionally to $ \log T $ as $ T \to \infty $. Our computational results indicate that this is not necessarily true for all designs
over finite horizons. For instance, the mean regret of \design{DP} in scenario $ \theta_{ C } = 0.7 , \theta_{ D } = 0.9 $ (see
\autoref{table:performance79}) at $ T = 240 $ is lower than that at $ T = 300 $, and it is non-increasing over $ T = 660 : 60 : 1200 $.

\section{Conclusion}
\label{section:conclusion}

The aim of this paper to provide a unified account of approaches from various disciplines to the two-armed bandit problem. We have proposed problem
terminology that we believe should not create conflicts with other existing terminology in most of the disciplines, to facilitate mutual learning. We have
proposed to use backward recursion instead of simulation for more accurate evaluation. We have created a computational comparison of designs from different
disciplines in a standardized set of scenarios. The computational results have showed that some of the simple ones (e.g. \design{BLFF+BM} and
\design{BLFF+BMSF}) perform surprisingly well in our scenarios and outperform may of the more sophisticated and more studied ones. We have also showed that
\design{DP} is tractable for much larger horizons than it is commonly believed. This suggests that there is a case for these to be used among benchmark
designs when developing new designs.

We have given an account of approaches to the problem with the objective of maximizing the mean number of successes, which is linear across arms and over
time. Most of the above designs, especially those horizon-ignorant, crucially depend on that property, and it is not clear how they could be modified if
the objective changed to another one. The only exception is the \design{DP} and its variants, which are quite flexible to accommodate other finite-horizon
objectives.

A significant area of research left out of this paper deals with practicalities of implementation of the designs, especially in the context of clinical
trials, in which (i) the objective is different, because it focusses much more on estimation of the success probabilities, and (ii) there are additional
constraints, e.g. requirement of a certain degree of randomization. A fair comparison of such designs would actually require a series of comparisons fixing
the randomization degree and including only those designs that satisfy it. Such work is extensive and is left for a separate paper.

\bibliography{BB_v6_arxiv}

\appendix

\section{Choosing the Prior Beta Distribution Parameters}
\label{section:priors}

While in the paper we have used the conventional Bayes parameters $ \left( \widetilde{ \successes } ( 0 ) , \widetilde{ \failures } ( 0 ) \right) = ( 1 , 1
) $ for the prior Beta distribution of each arm, the trials in practice are often designed based on pilot trials, expert opinion, or real-world evidence,
which can be summarized by the mean and the variance of the estimate of the success probability.

For Beta distribution with parameters $ \widetilde{ \successes } > 0 , \widetilde{ \failures } > 0 $, the mean and the variance are
\begin{align}
\mu ( \widetilde{ \successes } , \widetilde{ \failures } ) &= \frac{ \widetilde{ \successes } }{ \widetilde{ \successes } + \widetilde{ \failures } } , &
\sigma^2 ( \widetilde{ \successes } , \widetilde{ \failures } ) &= \frac{ \mu ( \widetilde{ \successes } , \widetilde{ \failures } ) ( 1 - \mu ( \widetilde{ \successes } , \widetilde{ \failures } ) ) }{ \widetilde{ \successes } + \widetilde{ \failures } + 1 } .
\end{align}
From these expressions we can obtain that for given mean $ \mu $ and variance $ \sigma^2 $, the parameters of the Beta distribution are
\begin{align}
\widetilde{ \successes } ( \mu , \sigma^2 ) &= \mu \left[ \frac{ \mu ( 1 - \mu ) }{ \sigma^2 } - 1 \right] , &
\widetilde{ \failures } ( \mu , \sigma^2 ) &= ( 1 - \mu ) \left[ \frac{ \mu ( 1 - \mu ) }{ \sigma^2 } - 1 \right] .
\end{align}

If parameters are $ \widetilde{ \successes } = 0 , \widetilde{ \failures } > 0 $, the distribution is degenerate and the mean and the variance are
\begin{align}
\mu ( \widetilde{ \successes } , \widetilde{ \failures } ) &= 0 , &
\sigma^2 ( \widetilde{ \successes } , \widetilde{ \failures } ) &= 0 .
\end{align}
These are the only Beta distributions with mean equal to $ 0 $. Such mean thus requires variance equal to $ 0 $, and parameter $ \widetilde{ \successes } =
0 $ while $ \widetilde{ \failures } $ cannot be uniquely determined.

If parameters are $ \widetilde{ \successes } > 0 , \widetilde{ \failures } = 0 $, the distribution is degenerate and the mean and the variance are
\begin{align}
\mu ( \widetilde{ \successes } , \widetilde{ \failures } ) &= 1 , &
\sigma^2 ( \widetilde{ \successes } , \widetilde{ \failures } ) &= 0 .
\end{align}
These are the only Beta distributions with mean equal to $ 1 $. Such mean thus requires variance equal to $ 0 $, and parameter $ \widetilde{ \failures } = 0
$ while $ \widetilde{ \successes } $ cannot be uniquely determined.

If parameters are $ \widetilde{ \successes } = 0 , \widetilde{ \failures } = 0 $, the distribution is a symmetric two-point Bernoulli distribution with
support $ \{ 0 , 1 \} $, and the mean and the variance are
\begin{align}
\mu ( \widetilde{ \successes } , \widetilde{ \failures } ) &= \frac{ 1 }{ 2 } , &
\sigma^2 ( \widetilde{ \successes } , \widetilde{ \failures } ) &= \frac{ 1 }{ 4 } .
\end{align}
This is the only Beta distribution with the largest possible variance, $ 1 / 4 $. Such variance thus requires mean equal to $ 1 / 2 $, and parameters are $
\widetilde{ \failures } = 0 , \widetilde{ \successes } = 0 $.

\begin{sidewaystable}[tbp]
\centering
\setlength{\columnsep}{30pt}
\footnotesize
\begin{tabular}{>{\bfseries}ccccccccccc}
\toprule
& \bfseries 0.05 & \bfseries 0.10 & \bfseries 0.15 & \bfseries 0.20 & \bfseries 0.25 & \bfseries 0.30 & \bfseries 0.35 & \bfseries 0.40 & \bfseries 0.45 & \bfseries 0.50 \\
\midrule
1/140 & (0.28, 5.37)   & (1.16, 10.44)  & (2.53, 14.32)  & (4.28, 17.12)  & (6.31, 18.94)  & (8.52, 19.88)  & (10.80, 20.05) & (13.04, 19.56) & (15.14, 18.51) & (17.00, 17.00) \\
1/136 & (0.27, 5.19)   & (1.12, 10.12)  & (2.45, 13.89)  & (4.15, 16.61)  & (6.12, 18.38)  & (8.27, 19.29)  & (10.48, 19.46) & (12.66, 18.98) & (14.70, 17.96) & (16.50, 16.50) \\
1/132 & (0.26, 5.01)   & (1.09, 9.79)   & (2.37, 13.46)  & (4.02, 16.10)  & (5.94, 17.81)  & (8.02, 18.70)  & (10.16, 18.87) & (12.27, 18.41) & (14.25, 17.42) & (16.00, 16.00) \\
1/128 & (0.25, 4.83)   & (1.05, 9.47)   & (2.30, 13.02)  & (3.90, 15.58)  & (5.75, 17.25)  & (7.76, 18.12)  & (9.84, 18.28)  & (11.89, 17.83) & (13.81, 16.87) & (15.50, 15.50) \\
1/124 & (0.24, 4.65)   & (1.02, 9.14)   & (2.22, 12.59)  & (3.77, 15.07)  & (5.56, 16.69)  & (7.51, 17.53)  & (9.52, 17.69)  & (11.50, 17.26) & (13.36, 16.33) & (15.00, 15.00) \\
\midrule
1/120 & (0.24, 4.46)   & (0.98, 8.82)   & (2.14, 12.16)  & (3.64, 14.56)  & (5.38, 16.12)  & (7.26, 16.94)  & (9.20, 17.10)  & (11.12, 16.68) & (12.92, 15.79) & (14.50, 14.50) \\
1/116 & (0.23, 4.28)   & (0.94, 8.50)   & (2.07, 11.72)  & (3.51, 14.05)  & (5.19, 15.56)  & (7.01, 16.35)  & (8.89, 16.50)  & (10.74, 16.10) & (12.47, 15.24) & (14.00, 14.00) \\
1/112 & (0.22, 4.10)   & (0.91, 8.17)   & (1.99, 11.29)  & (3.38, 13.54)  & (5.00, 15.00)  & (6.76, 15.76)  & (8.57, 15.91)  & (10.35, 15.53) & (12.02, 14.70) & (13.50, 13.50) \\
1/108 & (0.21, 3.92)   & (0.87, 7.85)   & (1.92, 10.85)  & (3.26, 13.02)  & (4.81, 14.44)  & (6.50, 15.18)  & (8.25, 15.32)  & (9.97, 14.95)  & (11.58, 14.15) & (13.00, 13.00) \\
1/104 & (0.20, 3.74)   & (0.84, 7.52)   & (1.84, 10.42)  & (3.13, 12.51)  & (4.62, 13.88)  & (6.25, 14.59)  & (7.93, 14.73)  & (9.58, 14.38)  & (11.13, 13.61) & (12.50, 12.50) \\
\midrule
1/100 & (0.19, 3.56)   & (0.80, 7.20)   & (1.76, 9.99)   & (3.00, 12.00)  & (4.44, 13.31)  & (6.00, 14.00)  & (7.61, 14.14)  & (9.20, 13.80)  & (10.69, 13.06) & (12.00, 12.00) \\
1/96 & (0.18, 3.38)   & (0.76, 6.88)   & (1.69, 9.55)   & (2.87, 11.49)  & (4.25, 12.75)  & (5.75, 13.41)  & (7.29, 13.55)  & (8.82, 13.22)  & (10.24, 12.52) & (11.50, 11.50) \\
1/92 & (0.17, 3.20)   & (0.73, 6.55)   & (1.61, 9.12)   & (2.74, 10.98)  & (4.06, 12.19)  & (5.50, 12.82)  & (6.98, 12.95)  & (8.43, 12.65)  & (9.80, 11.97)  & (11.00, 11.00) \\
1/88 & (0.16, 3.02)   & (0.69, 6.23)   & (1.53, 8.69)   & (2.62, 10.46)  & (3.88, 11.62)  & (5.24, 12.24)  & (6.66, 12.36)  & (8.05, 12.07)  & (9.35, 11.43)  & (10.50, 10.50) \\
1/84 & (0.15, 2.84)   & (0.66, 5.90)   & (1.46, 8.25)   & (2.49, 9.95)   & (3.69, 11.06)  & (4.99, 11.65)  & (6.34, 11.77)  & (7.66, 11.50)  & (8.91, 10.88)  & (10.00, 10.00) \\
\midrule
1/80 & (0.14, 2.66)   & (0.62, 5.58)   & (1.38, 7.82)   & (2.36, 9.44)   & (3.50, 10.50)  & (4.74, 11.06)  & (6.02, 11.18)  & (7.28, 10.92)  & (8.46, 10.34)  & (9.50, 9.50)   \\
1/76 & (0.13, 2.48)   & (0.58, 5.26)   & (1.30, 7.39)   & (2.23, 8.93)   & (3.31, 9.94)   & (4.49, 10.47)  & (5.70, 10.59)  & (6.90, 10.34)  & (8.01, 9.80)   & (9.00, 9.00)   \\
1/72 & (0.12, 2.30)   & (0.55, 4.93)   & (1.23, 6.95)   & (2.10, 8.42)   & (3.12, 9.38)   & (4.24, 9.88)   & (5.38, 10.00)  & (6.51, 9.77)   & (7.57, 9.25)   & (8.50, 8.50)   \\
1/68 & (0.11, 2.12)   & (0.51, 4.61)   & (1.15, 6.52)   & (1.98, 7.90)   & (2.94, 8.81)   & (3.98, 9.30)   & (5.06, 9.41)   & (6.13, 9.19)   & (7.12, 8.71)   & (8.00, 8.00)   \\
1/64 & (0.10, 1.94)   & (0.48, 4.28)   & (1.07, 6.09)   & (1.85, 7.39)   & (2.75, 8.25)   & (3.73, 8.71)   & (4.75, 8.81)   & (5.74, 8.62)   & (6.68, 8.16)   & (7.50, 7.50)   \\
\midrule
1/60 & (0.09, 1.76)   & (0.44, 3.96)   & (1.00, 5.65)   & (1.72, 6.88)   & (2.56, 7.69)   & (3.48, 8.12)   & (4.43, 8.22)   & (5.36, 8.04)   & (6.23, 7.62)   & (7.00, 7.00)   \\
1/56 & (0.08, 1.58)   & (0.40, 3.64)   & (0.92, 5.22)   & (1.59, 6.37)   & (2.38, 7.12)   & (3.23, 7.53)   & (4.11, 7.63)   & (4.98, 7.46)   & (5.79, 7.07)   & (6.50, 6.50)   \\
1/52 & (0.07, 1.40)   & (0.37, 3.31)   & (0.84, 4.79)   & (1.46, 5.86)   & (2.19, 6.56)   & (2.98, 6.94)   & (3.79, 7.04)   & (4.59, 6.89)   & (5.34, 6.53)   & (6.00, 6.00)   \\
1/48 & (0.06, 1.22)   & (0.33, 2.99)   & (0.77, 4.35)   & (1.34, 5.34)   & (2.00, 6.00)   & (2.72, 6.36)   & (3.47, 6.45)   & (4.21, 6.31)   & (4.90, 5.98)   & (5.50, 5.50)   \\
1/44 & (0.05, 1.04)   & (0.30, 2.66)   & (0.69, 3.92)   & (1.21, 4.83)   & (1.81, 5.44)   & (2.47, 5.77)   & (3.15, 5.86)   & (3.82, 5.74)   & (4.45, 5.44)   & (5.00, 5.00)   \\
\midrule
1/40 & (0.04, 0.85)   & (0.26, 2.34)   & (0.61, 3.48)   & (1.08, 4.32)   & (1.62, 4.88)   & (2.22, 5.18)   & (2.83, 5.26)   & (3.44, 5.16)   & (4.00, 4.90)   & (4.50, 4.50)   \\
1/36 & (0.04, 0.67)   & (0.22, 2.02)   & (0.54, 3.05)   & (0.95, 3.81)   & (1.44, 4.31)   & (1.97, 4.59)   & (2.52, 4.67)   & (3.06, 4.58)   & (3.56, 4.35)   & (4.00, 4.00)   \\
1/32 & (0.03, 0.49)   & (0.19, 1.69)   & (0.46, 2.62)   & (0.82, 3.30)   & (1.25, 3.75)   & (1.72, 4.00)   & (2.20, 4.08)   & (2.67, 4.01)   & (3.11, 3.81)   & (3.50, 3.50)   \\
1/28 & (0.02, 0.31)   & (0.15, 1.37)   & (0.39, 2.18)   & (0.70, 2.78)   & (1.06, 3.19)   & (1.46, 3.42)   & (1.88, 3.49)   & (2.29, 3.43)   & (2.67, 3.26)   & (3.00, 3.00)   \\
1/24 & (0.01, 0.13)   & (0.12, 1.04)   & (0.31, 1.75)   & (0.57, 2.27)   & (0.88, 2.62)   & (1.21, 2.83)   & (1.56, 2.90)   & (1.90, 2.86)   & (2.22, 2.72)   & (2.50, 2.50)   \\
\midrule
1/20 & N/A            & (0.08, 0.72)   & (0.23, 1.32)   & (0.44, 1.76)   & (0.69, 2.06)   & (0.96, 2.24)   & (1.24, 2.31)   & (1.52, 2.28)   & (1.78, 2.17)   & (2.00, 2.00)   \\
1/16 & N/A            & (0.04, 0.40)   & (0.16, 0.88)   & (0.31, 1.25)   & (0.50, 1.50)   & (0.71, 1.65)   & (0.92, 1.72)   & (1.14, 1.70)   & (1.33, 1.63)   & (1.50, 1.50)   \\
1/12 & N/A            & (0.01, 0.07)   & (0.08, 0.45)   & (0.18, 0.74)   & (0.31, 0.94)   & (0.46, 1.06)   & (0.61, 1.12)   & (0.75, 1.13)   & (0.89, 1.08)   & \textbf{(1.00, 1.00)}   \\
1/8 & N/A            & N/A            & (0.00, 0.02)   & (0.06, 0.22)   & (0.12, 0.38)   & (0.20, 0.48)   & (0.29, 0.53)   & (0.37, 0.55)   & (0.44, 0.54)   & \textbf{(0.50, 0.50)}   \\
1/4 & N/A            & N/A            & N/A            & N/A            & N/A            & N/A            & N/A            & N/A            & N/A            & \textbf{(0.00, 0.00)}   \\
\bottomrule
\end{tabular}
\caption{Parameters $ ( \widetilde{ \successes } , \widetilde{ \failures } ) $, rounded to two decimal places, of the Beta distribution that achieve the
indicated values of mean (columns) and variance (rows). For mean greater than $ 0.5 $, one can use the property of the Beta distribution that swapping the
parameters gives the complementary mean.} \label{table:beta}
\end{sidewaystable}

\autoref{table:beta} illustrates values of the parameters of the Beta distribution that correspond to particular pairs of mean and variance. Three
well-known priors are highlighted in bold: the \emph{Bayes prior} $ \left( \widetilde{ \successes } , \widetilde{ \failures } \right) = ( 1 , 1 ) $  with variance $ 1 / 12 $, the
\emph{Jeffreys prior} $ \left( \widetilde{ \successes } , \widetilde{ \failures } \right) = ( 1 / 2 , 1 / 2 ) $  with variance $ 1 / 8 $, and the \emph{Haldane prior} $ \left(
\widetilde{ \successes } , \widetilde{ \failures } \right) = ( 0 , 0 ) $ with variance $ 1 / 4 $. All three are considered non-informative, although it may be argued that they
provide different amount of information, e.g. the Bayes prior provides information that it is possible to observe each response, while the Haldane prior
includes the setting that only one of the responses is possible.


\section{Equivalence of Reward Definitions}
\label{section:reward_equivalence}

Throughout this section, we assume that information state $ \veci $ is constant and drop the explicit dependence on it. A particular design prescribes $
a_{ \vecx } \in \setA := \{ 1 , 2 , 3 \} $ for every state $ \vecx = ( s_{ C } , f_{ C } , s_{ D } , f_{ D } ) $. We first consider that the design is
fixed.

Using the classic reward definition, the Poisson equation \citep[cf.][]{MakowskiShwartz2002handbook} is: for all states satisfying $ s_{ C } + f_{ C } + s_{
D } + f_{ D } = T $,
\begin{align}
F_{ T } ( s_{ C } , f_{ C } , s_{ D } , f_{ D } ) = 0 \label{F_T}
\end{align}
and for all $ t \in \setT $ and for all states satisfying $ s_{ C } + f_{ C } + s_{ D } + f_{ D } = t $,
\begin{align}
F_{ t } ( s_{ C } , f_{ C } , s_{ D } , f_{ D } ) = p_{ C }^{ a_{ ( s_{ C } , f_{ C } , s_{ D } , f_{ D } ) } } \left[
q_{ C , ( s_{ C } , f_{ C } , s_{ D } , f_{ D } ) , 1 } \left( 1 + F_{ t + 1 } ( s_{ C } + 1 , f_{ C } , s_{ D } , f_{ D } ) \right) \right. \notag \\
\left. + q_{ C , ( s_{ C } , f_{ C } , s_{ D } , f_{ D } ) , 0 } \left( F_{ t + 1 } ( s_{ C } , f_{ C } + 1 , s_{ D } , f_{ D } ) \right) \right] \notag \\
+ p_{ D }^{ a_{ ( s_{ C } , f_{ C } , s_{ D } , f_{ D } ) } } \left[
q_{ D , ( s_{ C } , f_{ C } , s_{ D } , f_{ D } ) , 1 } \left( 1 + F_{ t + 1 } ( s_{ C } , f_{ C } , s_{ D } + 1 , f_{ D } ) \right) \right. \notag \\
\left. + q_{ D , ( s_{ C } , f_{ C } , s_{ D } , f_{ D } ) , 0 } \left( F_{ t + 1 } ( s_{ C } , f_{ C } , s_{ D } , f_{ D } + 1 ) \right) \right] \label{F_t}
\end{align}

Using the new reward definition, the Poisson equation is: for all states satisfying $ s_{ C } + f_{ C } + s_{ D } + f_{ D } = T $,
\begin{align}
G_{ T } ( s_{ C } , f_{ C } , s_{ D } , f_{ D } ) = s_{ C } + s_{ D } \label{G_T}
\end{align}
and for all $ t \in \setT $ and for all states satisfying $ s_{ C } + f_{ C } + s_{ D } + f_{ D } = t $,
\begin{align}
G_{ t } ( s_{ C } , f_{ C } , s_{ D } , f_{ D } ) = p_{ C }^{ a_{ ( s_{ C } , f_{ C } , s_{ D } , f_{ D } ) } } \left[
q_{ C , ( s_{ C } , f_{ C } , s_{ D } , f_{ D } ) , 1 } \left( G_{ t + 1 } ( s_{ C } + 1 , f_{ C } , s_{ D } , f_{ D } ) \right) \right. \notag \\
\left. + q_{ C , ( s_{ C } , f_{ C } , s_{ D } , f_{ D } ) , 0 } \left( G_{ t + 1 } ( s_{ C } , f_{ C } + 1 , s_{ D } , f_{ D } ) \right) \right] \notag \\
+ p_{ D }^{ a_{ ( s_{ C } , f_{ C } , s_{ D } , f_{ D } ) } } \left[
q_{ D , ( s_{ C } , f_{ C } , s_{ D } , f_{ D } ) , 1 } \left( G_{ t + 1 } ( s_{ C } , f_{ C } , s_{ D } + 1 , f_{ D } ) \right) \right. \notag \\
\left. + q_{ D , ( s_{ C } , f_{ C } , s_{ D } , f_{ D } ) , 0 } \left( G_{ t + 1 } ( s_{ C } , f_{ C } , s_{ D } , f_{ D } + 1 ) \right) \right] \label{G_t}
\end{align}

\begin{lemma}
The recursion \eqref{G_T}--\eqref{G_t} evaluates the design equivalently to the recursion \eqref{F_T}--\eqref{F_t}, and in particular, $ G_{ 0 } ( 0 , 0 ,
0 , 0 ) = F_{ 0 } ( 0 , 0 , 0 , 0 ) $.
\end{lemma}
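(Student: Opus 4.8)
The plan is to prove, by backward induction on $t$ running from $T$ down to $0$, the stronger pointwise identity that for every state $\vecx = ( \successes_C , \failures_C , \successes_D , \failures_D )$ with $\successes_C + \failures_C + \successes_D + \failures_D = t$ one has
\begin{align}
G_t ( \vecx ) = F_t ( \vecx ) + \successes_C + \successes_D .
\end{align}
The lemma is then the special case $t = 0$, $\vecx = \veczero$, where $\successes_C + \successes_D = 0$, so $G_0 ( \veczero ) = F_0 ( \veczero )$. The base case $t = T$ is immediate: \eqref{F_T} gives $F_T ( \vecx ) = 0$ and \eqref{G_T} gives $G_T ( \vecx ) = \successes_C + \successes_D$, so the identity holds there.

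For the inductive step I would fix a state $\vecx$ at level $t \le T - 1$, abbreviate its prescribed action by $a := a_{ \vecx }$, and substitute the inductive hypothesis into each of the four $G_{ t + 1 }$-terms on the right-hand side of \eqref{G_t}. The key observation is that among the four successors of $\vecx$ the two reached by a \emph{success} (on $C$ or on $D$) carry the offset $\successes_C + \successes_D + 1$, while the two reached by a failure carry the offset $\successes_C + \successes_D$. Collecting terms, the pure offset $\successes_C + \successes_D$ appears with total weight $p_C^a ( q_{ C , \vecx , 1 } + q_{ C , \vecx , 0 } ) + p_D^a ( q_{ D , \vecx , 1 } + q_{ D , \vecx , 0 } )$, which equals $p_C^a + p_D^a = 1$ by $\sum_{ o \in \setO } q_{ k , \vecx , o } = 1$ together with $p_C^a + p_D^a = 1$; hence this part contributes exactly $\successes_C + \successes_D$. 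What remains — the $F_{ t + 1 }$-terms together with the two ``$+1$'' contributions, which are carried with weights $p_C^a q_{ C , \vecx , 1 }$ and $p_D^a q_{ D , \vecx , 1 }$ — is, by construction, precisely the right-hand side of \eqref{F_t}, i.e. $F_t ( \vecx )$. This yields $G_t ( \vecx ) = F_t ( \vecx ) + \successes_C + \successes_D$ and closes the induction.

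The argument is uniform in the action $a \in \{ 1 , 2 , 3 \}$ and in the (constant) information state, so no case split on the design is needed. The main thing to watch — the nearest thing to an obstacle in an otherwise routine computation — is the bookkeeping in the collecting step: one has to make sure the extra unit of reward is attached to exactly the two success-successors, and that the two normalisations ($\sum_o q_{ k , \vecx , o } = 1$ and $p_C^a + p_D^a = 1$) are exactly what make the $\successes_C + \successes_D$ offset propagate unchanged from level $t + 1$ to level $t$ while the unit-reward-on-success terms get absorbed into $F_t ( \vecx )$ rather than into the offset. That telescoping is really the whole content of the lemma: intuitively, deferring the credit for each success to the terminal epoch changes nothing because no reward is ever discounted.
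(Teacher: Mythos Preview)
Your proposal is correct and follows essentially the same approach as the paper: both prove by backward induction the stronger pointwise identity $G_t(\vecx) = F_t(\vecx) + \successes_C + \successes_D$, establish the base case at $t=T$ directly from \eqref{F_T}--\eqref{G_T}, and in the inductive step substitute the hypothesis into \eqref{G_t}, then use the two normalisations $\sum_o q_{k,\vecx,o}=1$ and $p_C^a+p_D^a=1$ to extract the offset $\successes_C+\successes_D$ and recognise the remainder as the right-hand side of \eqref{F_t}. Your write-up is in fact slightly more explicit than the paper's about where the ``$+1$'' terms go, which is helpful.
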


\begin{proof}
We will prove a more general result that for all $ t \in \setT \cup \{ T \} $ and for all states satisfying $ s_{ C } + f_{ C } + s_{ D } + f_{ D } = t $,
\begin{align}
G_{ t } ( s_{ C } , f_{ C } , s_{ D } , f_{ D } ) = F_{ t } ( s_{ C } , f_{ C } , s_{ D } , f_{ D } ) + s_{ C } + s_{ D } .
\end{align}

From \eqref{F_T} and \eqref{G_T} it is clear that and for all states satisfying $ s_{ C } + f_{ C } + s_{ D } + f_{ D } = T $,
\begin{align}
G_{ T } ( s_{ C } , f_{ C } , s_{ D } , f_{ D } ) = F_{ T } ( s_{ C } , f_{ C } , s_{ D } , f_{ D } ) + s_{ C } + s_{ D } .
\end{align}
It remains to prove that for all $ t \in \setT $ and for all states satisfying $ s_{ C } + f_{ C } + s_{ D } + f_{ D } = t $,
\begin{align}
G_{ t } ( s_{ C } , f_{ C } , s_{ D } , f_{ D } ) = F_{ t } ( s_{ C } , f_{ C } , s_{ D } , f_{ D } ) + s_{ C } + s_{ D }
\end{align}

Using induction assumption for $ t + 1 $
\begin{align}
G_{ t + 1 } ( s_{ C } , f_{ C } , s_{ D } , f_{ D } ) = F_{ t + 1 } ( s_{ C } , f_{ C } , s_{ D } , f_{ D } ) + s_{ C } + s_{ D }
\end{align}
and plugging it into \eqref{G_t}, we have
\begin{align}
G_{ t } ( s_{ C } , f_{ C } , s_{ D } , f_{ D } ) = p_{ C }^{ a_{ ( s_{ C } , f_{ C } , s_{ D } , f_{ D } ) } } \left[
q_{ C , ( s_{ C } , f_{ C } , s_{ D } , f_{ D } ) , 1 } \left( F_{ t + 1 } ( s_{ C } + 1 , f_{ C } , s_{ D } , f_{ D } ) + s_{ C } + s_{ D } + 1 \right) \right. \notag \\
\left. + q_{ C , ( s_{ C } , f_{ C } , s_{ D } , f_{ D } ) , 0 } \left( F_{ t + 1 } ( s_{ C } , f_{ C } + 1 , s_{ D } , f_{ D } ) + s_{ C } + s_{ D } \right) \right] \notag \\
+ p_{ D }^{ a_{ ( s_{ C } , f_{ C } , s_{ D } , f_{ D } ) } } \left[
q_{ D , ( s_{ C } , f_{ C } , s_{ D } , f_{ D } ) , 1 } \left( F_{ t + 1 } ( s_{ C } , f_{ C } , s_{ D } + 1 , f_{ D } ) + s_{ C } + s_{ D } + 1 \right) \right. \notag \\
\left. + q_{ D , ( s_{ C } , f_{ C } , s_{ D } , f_{ D } ) , 0 } \left( F_{ t + 1 } ( s_{ C } , f_{ C } , s_{ D } , f_{ D } + 1 ) + s_{ C } + s_{ D } \right) \right] \notag
\end{align}
which after repetitively using \eqref{F_t} yields
\begin{align}
G_{ t } ( s_{ C } , f_{ C } , s_{ D } , f_{ D } ) = F_{ t } ( s_{ C } , f_{ C } , s_{ D } , f_{ D } ) + p_{ C }^{ a_{ ( s_{ C } , f_{ C } , s_{ D } , f_{ D } ) } } \left[
q_{ C , ( s_{ C } , f_{ C } , s_{ D } , f_{ D } ) , 1 } \left( s_{ C } + s_{ D } \right) \right. \notag \\
\left. + q_{ C , ( s_{ C } , f_{ C } , s_{ D } , f_{ D } ) , 0 } \left( s_{ C } + s_{ D } \right) \right] \notag \\
+ p_{ D }^{ a_{ ( s_{ C } , f_{ C } , s_{ D } , f_{ D } ) } } \left[
q_{ D , ( s_{ C } , f_{ C } , s_{ D } , f_{ D } ) , 1 } \left( s_{ C } + s_{ D } \right) \right. \notag \\
\left. + q_{ D , ( s_{ C } , f_{ C } , s_{ D } , f_{ D } ) , 0 } \left( s_{ C } + s_{ D } \right) \right] \notag
\end{align}
which gives $ G_{ t } ( s_{ C } , f_{ C } , s_{ D } , f_{ D } ) = F_{ t } ( s_{ C } , f_{ C } , s_{ D } , f_{ D } ) + s_{ C } + s_{ D } $
because $ \sum_{ o } q_{ k , ( \vecx , \veci ) , o } = 1 $ for all $ k $ and $ \sum_{ k } p_{ k }^{ a } = 1 $ for all $ a $.
\end{proof}

Note that these two theorems do not hold in the model with (non-uniform) geometric discounting of future rewards, because there the order of observations
matters.

\section{Designs Performance --- Small Trial Sizes}
\label{section:performance_continued}

\autoref{fig:PS_240} and \autoref{fig:regret_240} present the performance results for small trial sizes, analogously to \autoref{fig:PS_2400} and
\autoref{fig:regret_2400}, respectively.

\begin{figure}[tbp]
\centering
    \begin{subfigure}[b]{0.48\textwidth}
        \includegraphics[trim=0pt 0pt 0pt 0pt, clip=true, width=\textwidth]{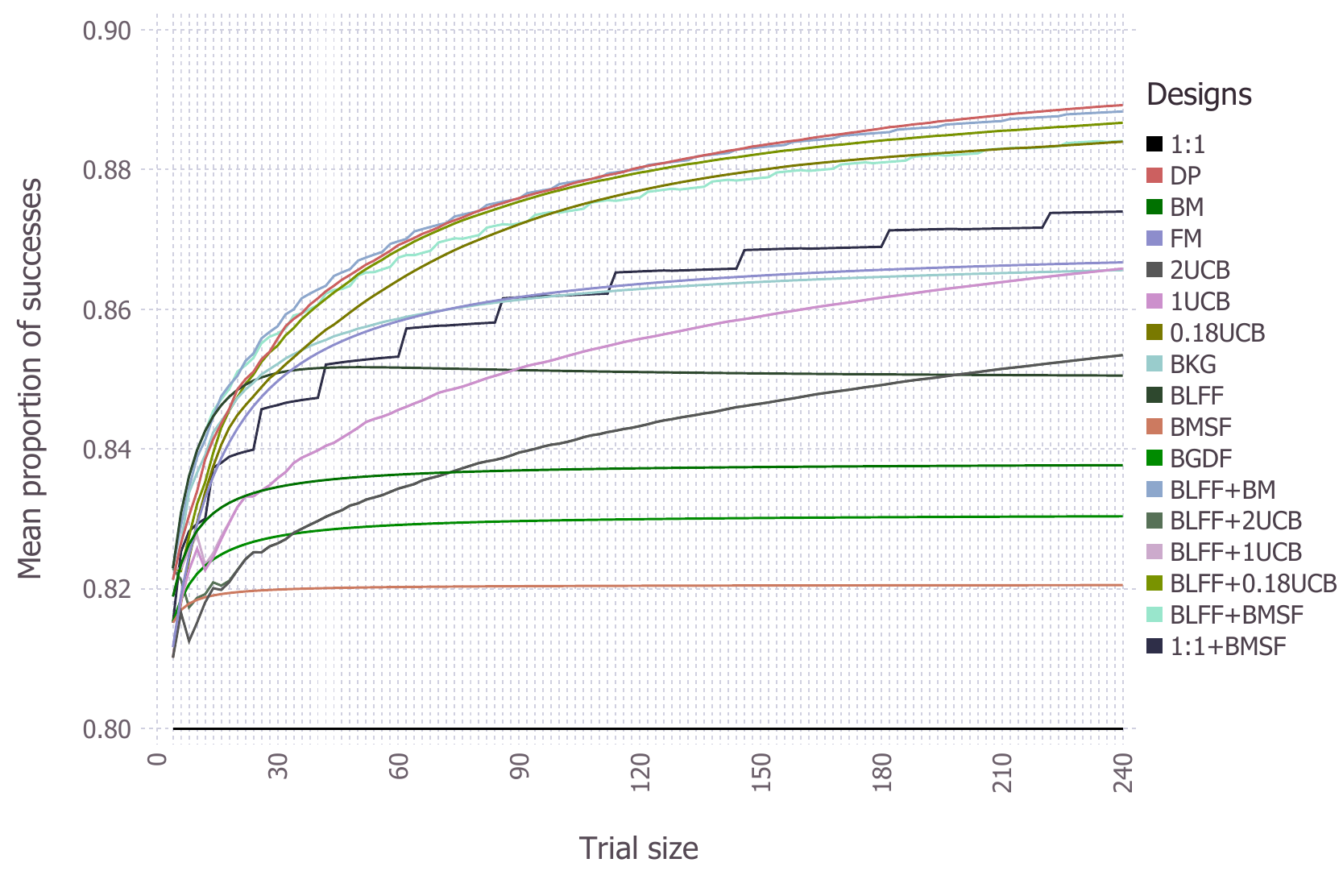}
    \end{subfigure}\hfill%
    \begin{subfigure}[b]{0.48\textwidth}
        \includegraphics[trim=0pt 0pt 0pt 0pt, clip=true, width=\textwidth]{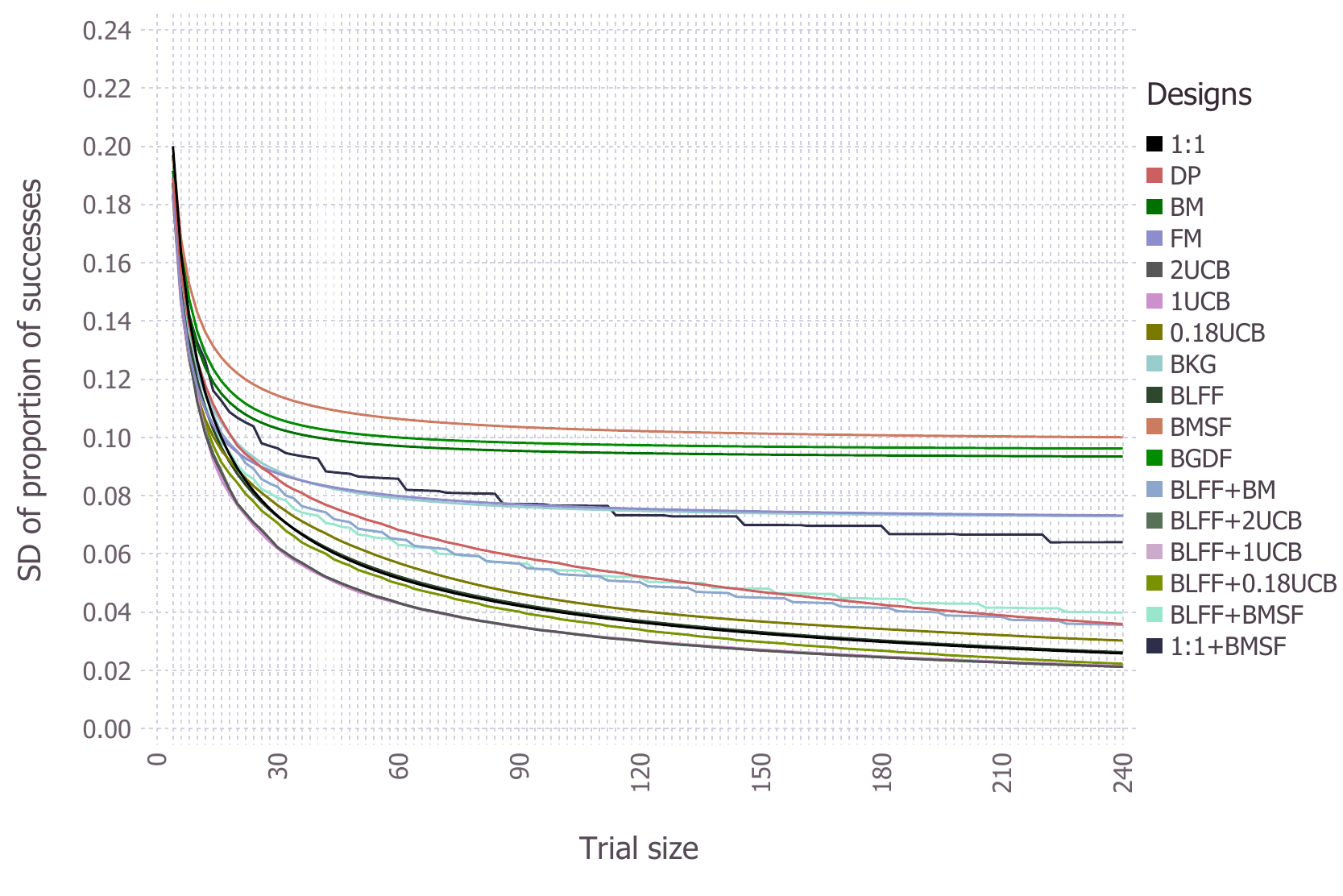}
    \end{subfigure}%

    \begin{subfigure}[b]{0.48\textwidth}
        \includegraphics[trim=0pt 0pt 0pt 0pt, clip=true, width=\textwidth]{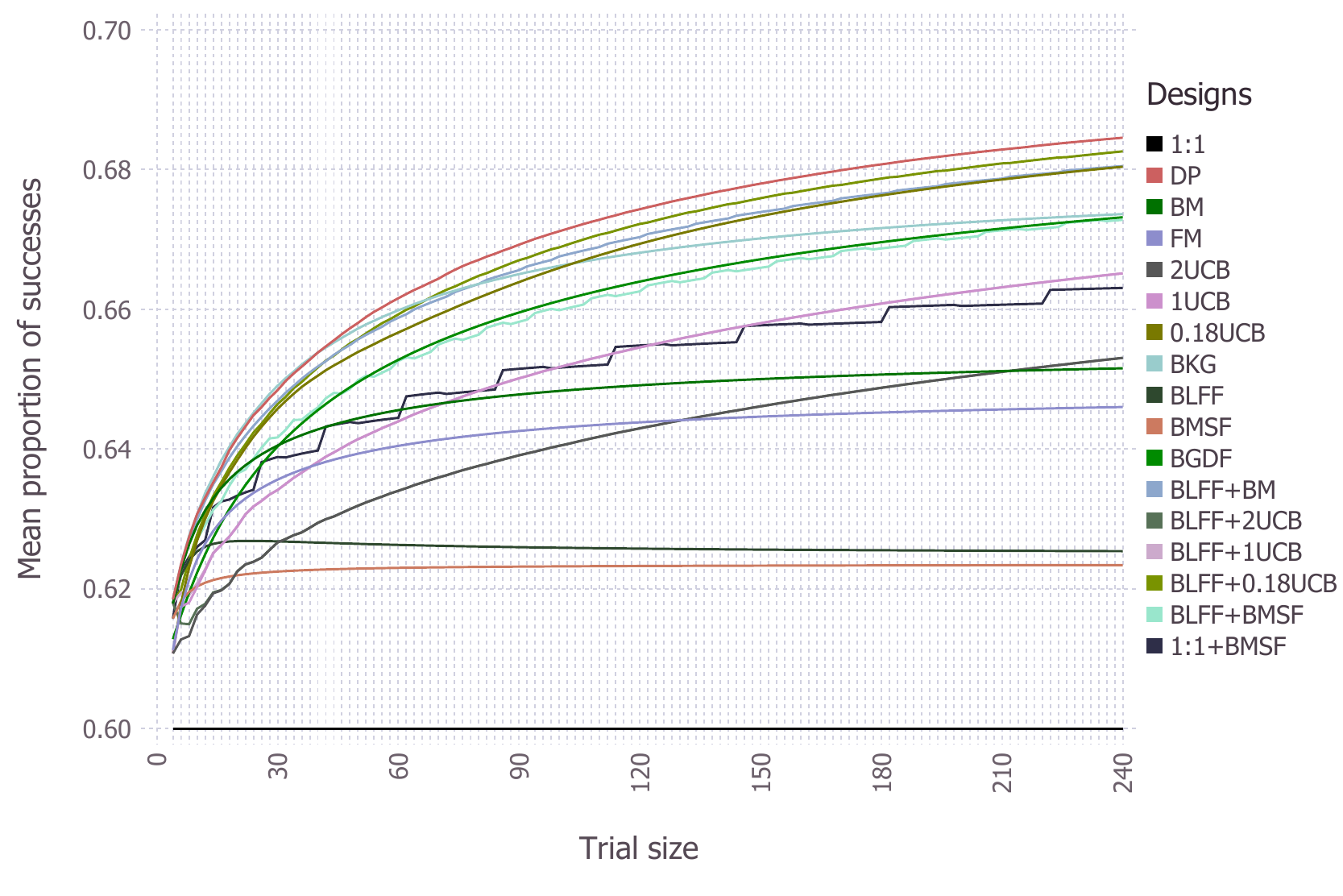}
    \end{subfigure}\hfill%
    \begin{subfigure}[b]{0.48\textwidth}
        \includegraphics[trim=0pt 0pt 0pt 0pt, clip=true, width=\textwidth]{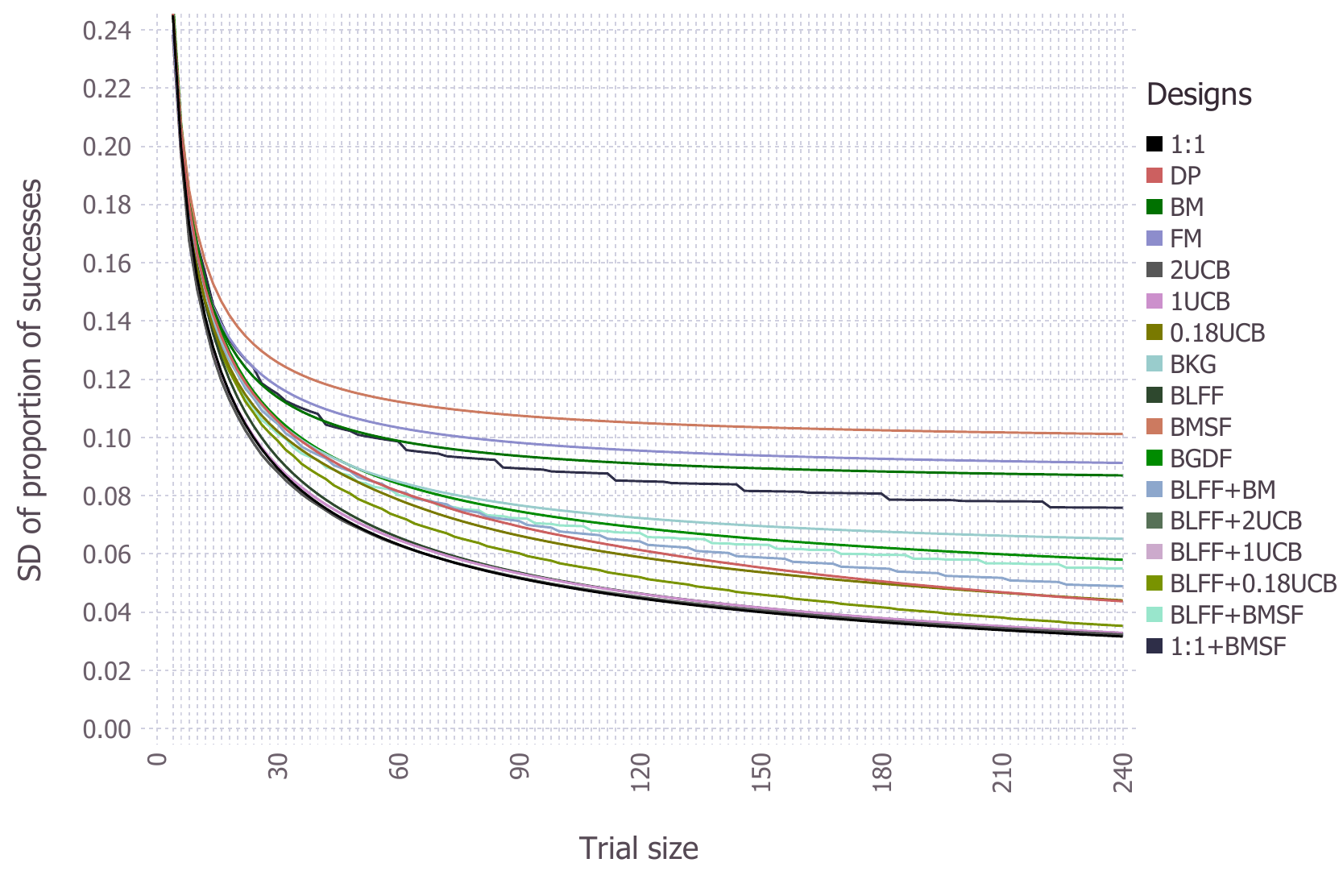}
    \end{subfigure}%

    \begin{subfigure}[b]{0.48\textwidth}
        \includegraphics[trim=0pt 0pt 0pt 0pt, clip=true, width=\textwidth]{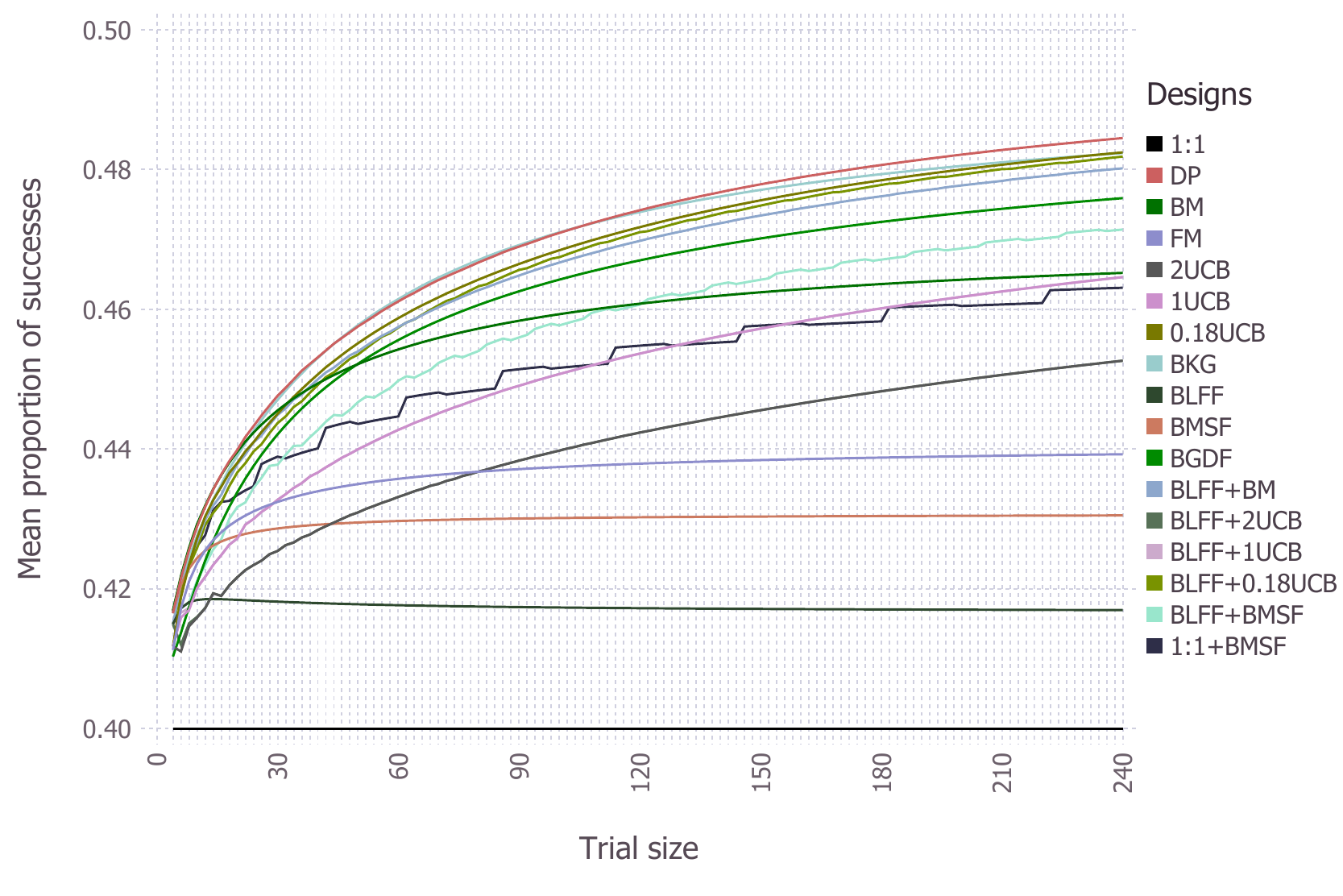}
    \end{subfigure}\hfill%
    \begin{subfigure}[b]{0.48\textwidth}
        \includegraphics[trim=0pt 0pt 0pt 0pt, clip=true, width=\textwidth]{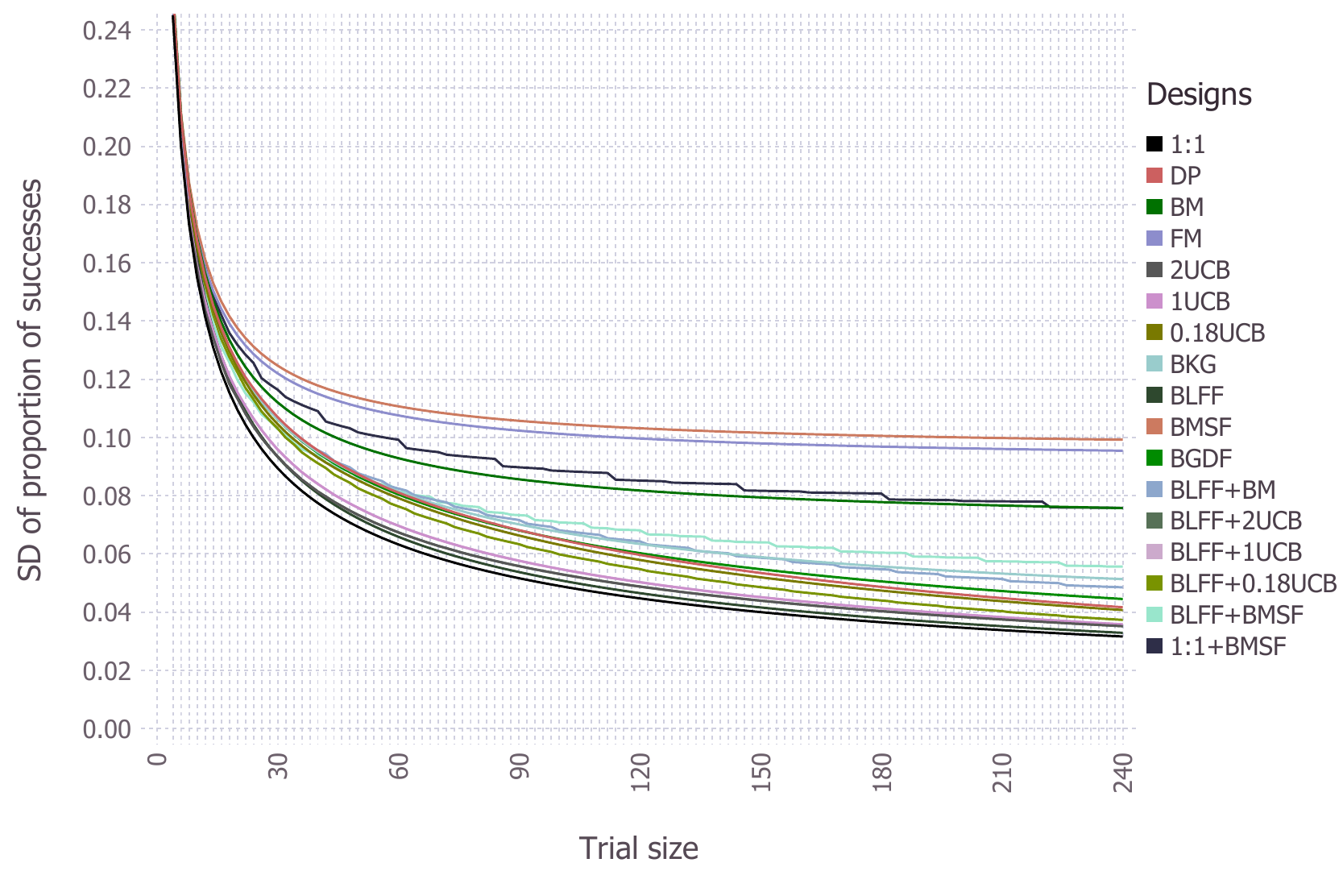}
    \end{subfigure}%

    \begin{subfigure}[b]{0.48\textwidth}
        \includegraphics[trim=0pt 0pt 0pt 0pt, clip=true, width=\textwidth]{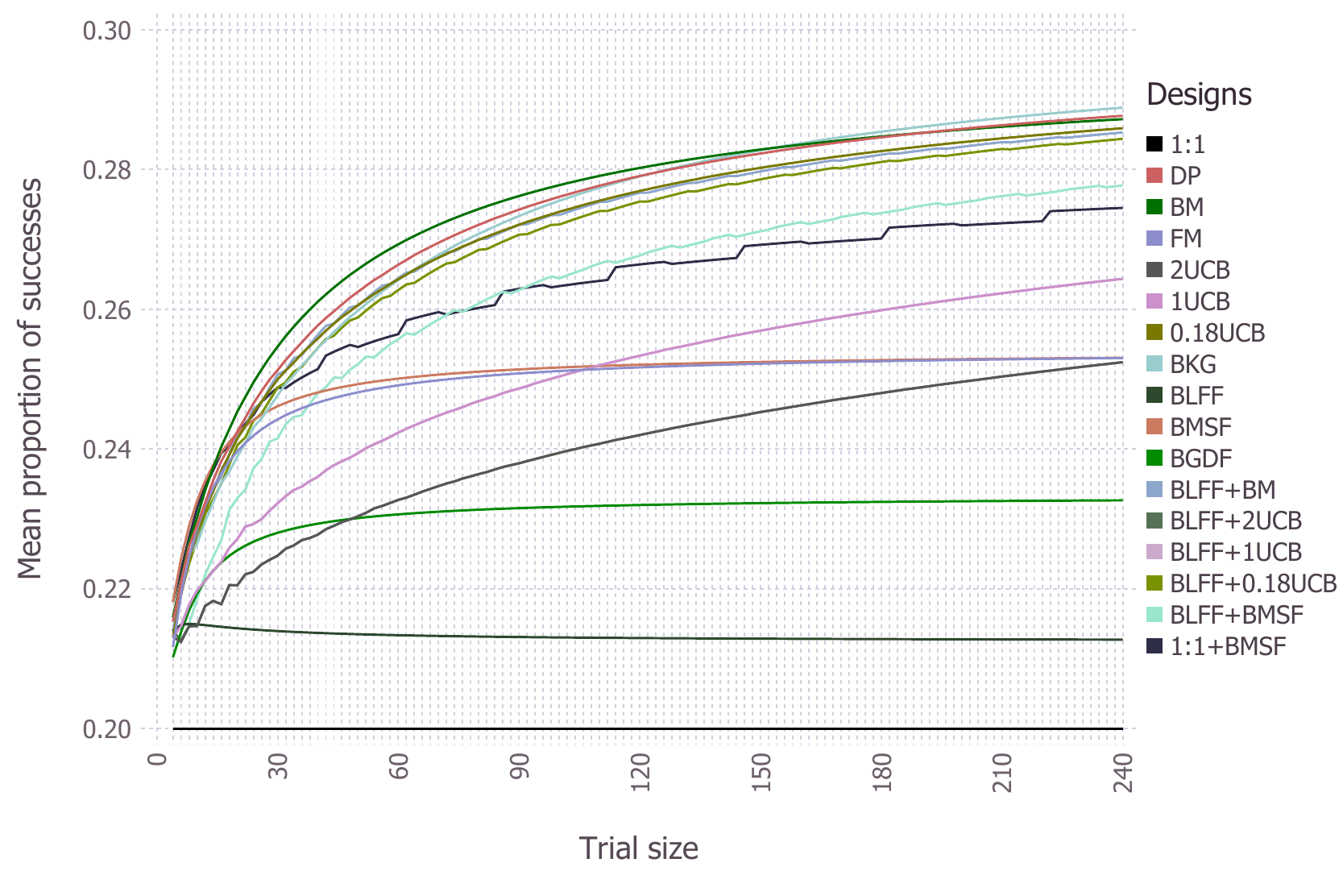}
    \end{subfigure}\hfill%
    \begin{subfigure}[b]{0.48\textwidth}
        \includegraphics[trim=0pt 0pt 0pt 0pt, clip=true, width=\textwidth]{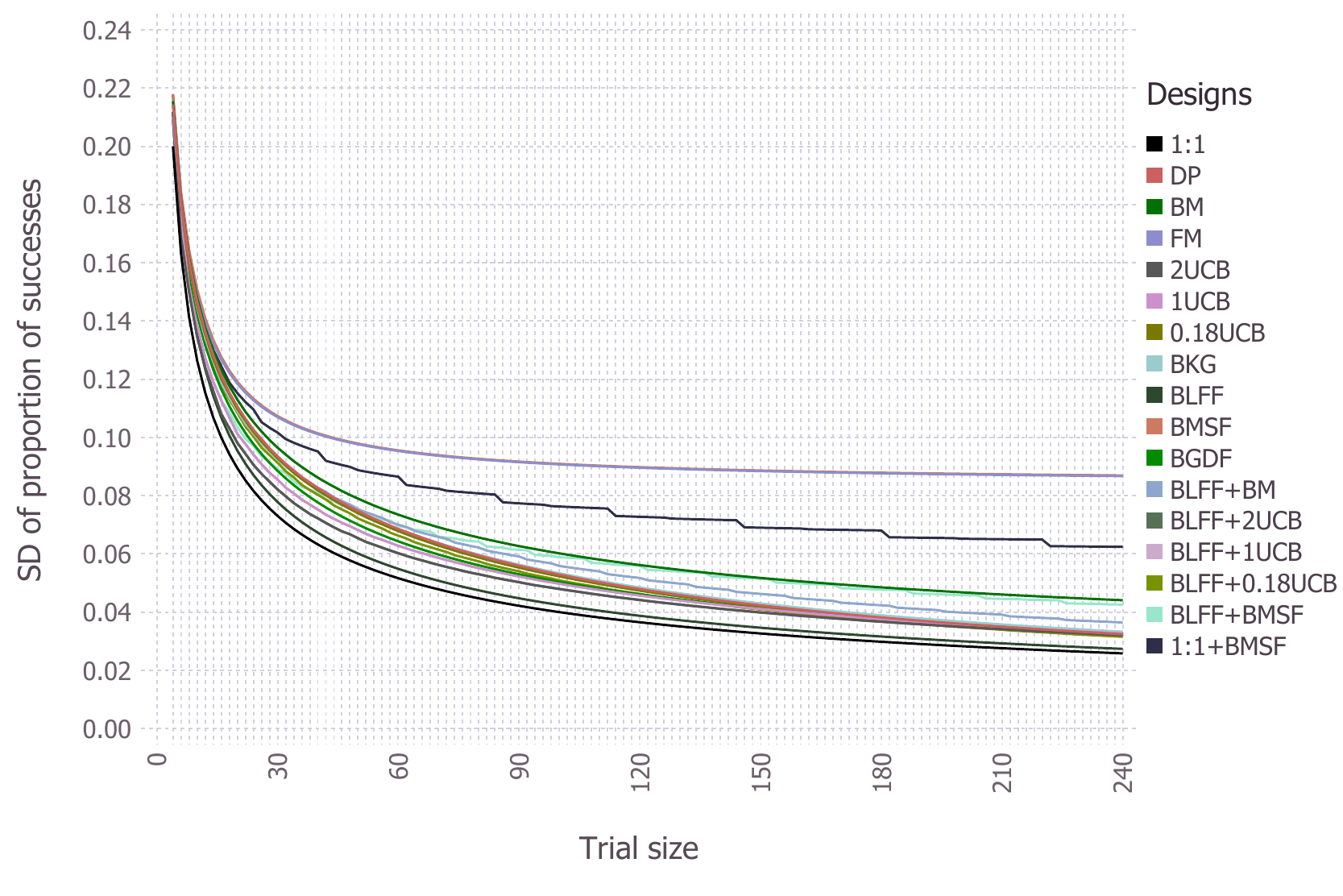}
    \end{subfigure}%
\caption{An illustration of performance (mean on the left, standard deviation on the right) in terms of the expected proportion of successes evaluated for
deterministic designs over a range of small trial sizes, for $ ( \theta_{ C } , \theta_{ D } ) = ( 0.7 , 0.9 ) $ in the first row, $ ( 0.5 , 0.7 ) $ in the
second row, $ ( 0.3 , 0.5 ) $ in the third row, $ ( 0.1 , 0.3 ) $ in the fourth row.}\label{fig:PS_240}
\end{figure}

\begin{figure}[tbp]
\centering
    \begin{subfigure}[b]{0.48\textwidth}
        \includegraphics[trim=0pt 0pt 0pt 0pt, clip=true, width=\textwidth]{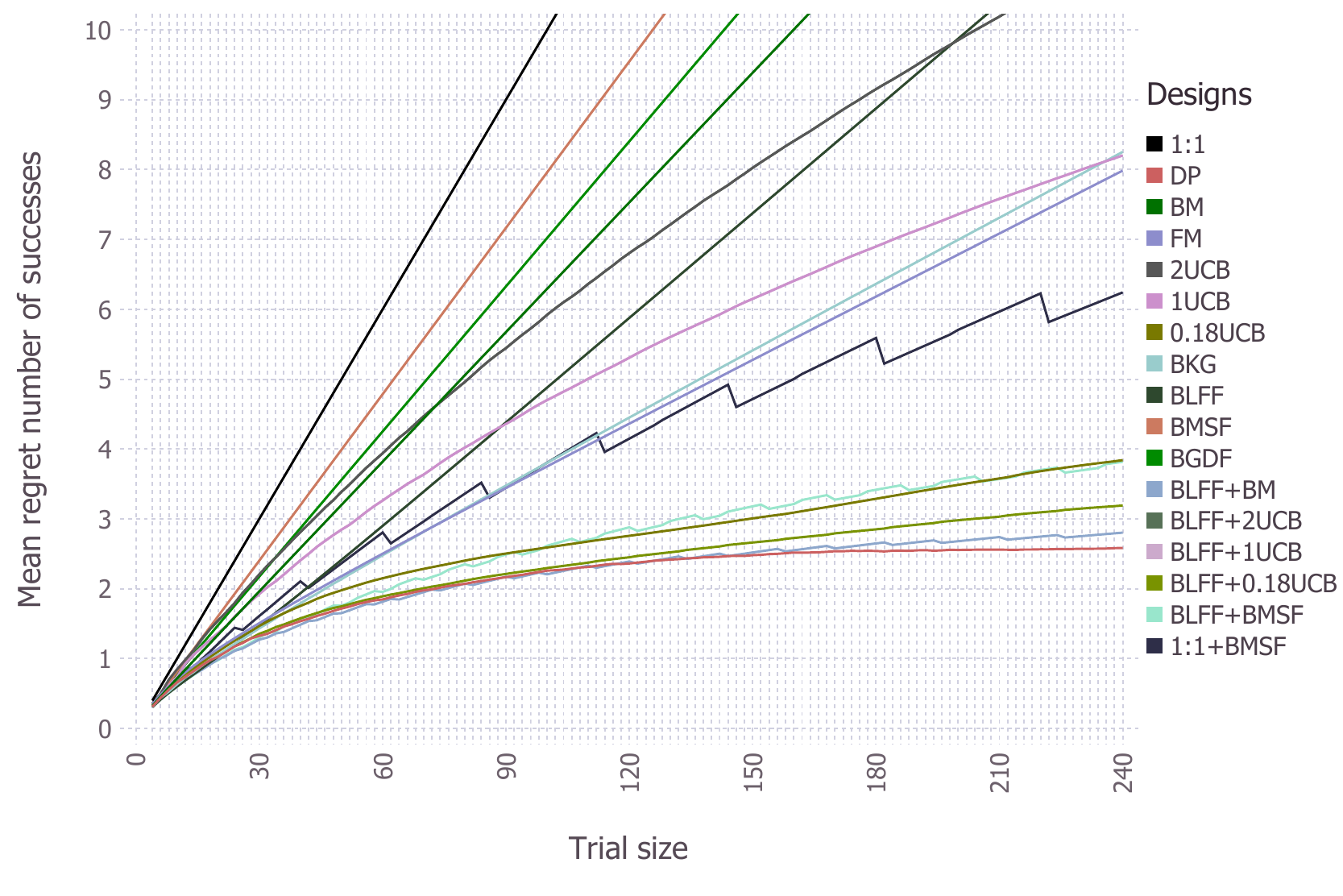}
    \end{subfigure}\hfill%
    \begin{subfigure}[b]{0.48\textwidth}
        \includegraphics[trim=0pt 0pt 0pt 0pt, clip=true, width=\textwidth]{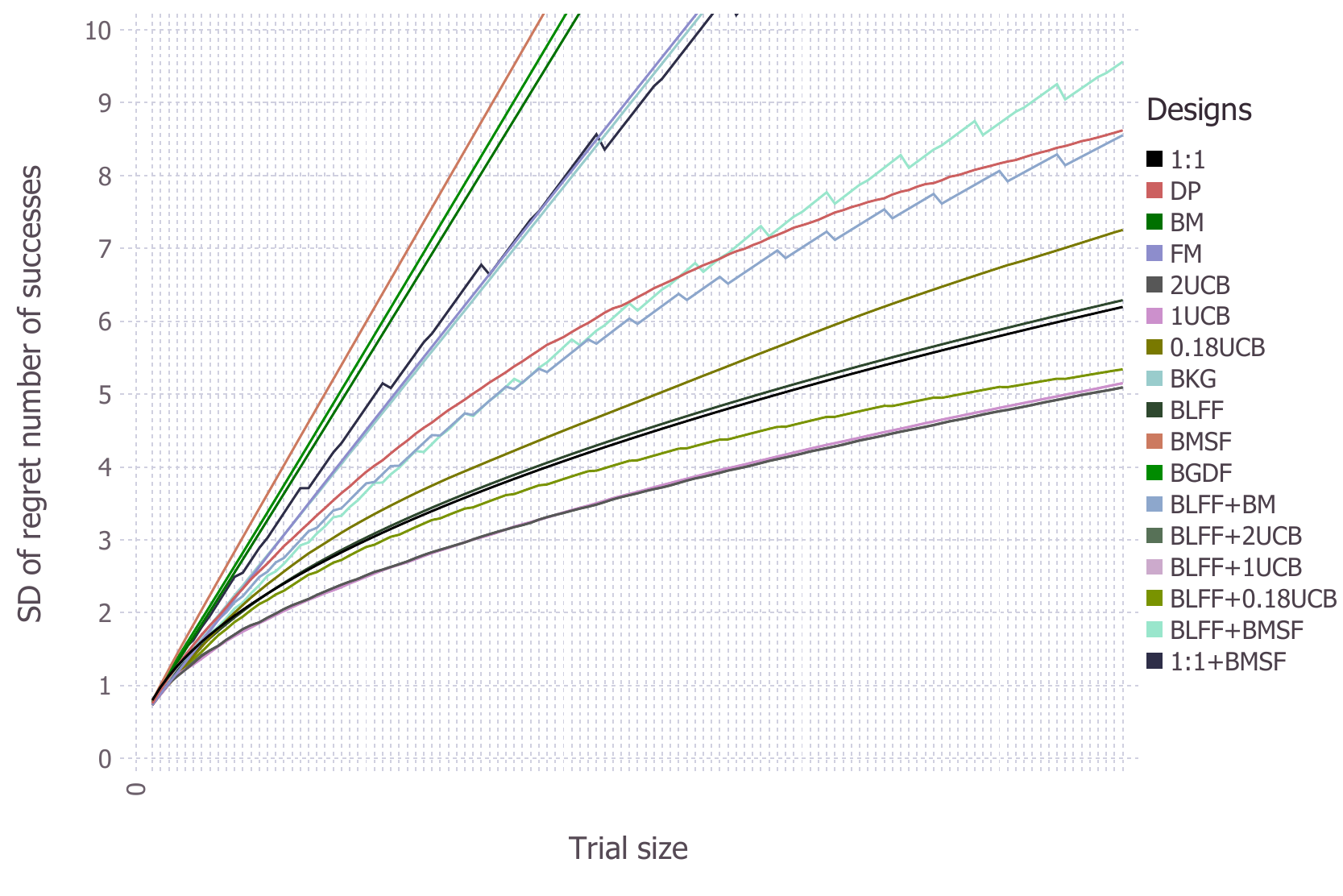}
    \end{subfigure}%

    \begin{subfigure}[b]{0.48\textwidth}
        \includegraphics[trim=0pt 0pt 0pt 0pt, clip=true, width=\textwidth]{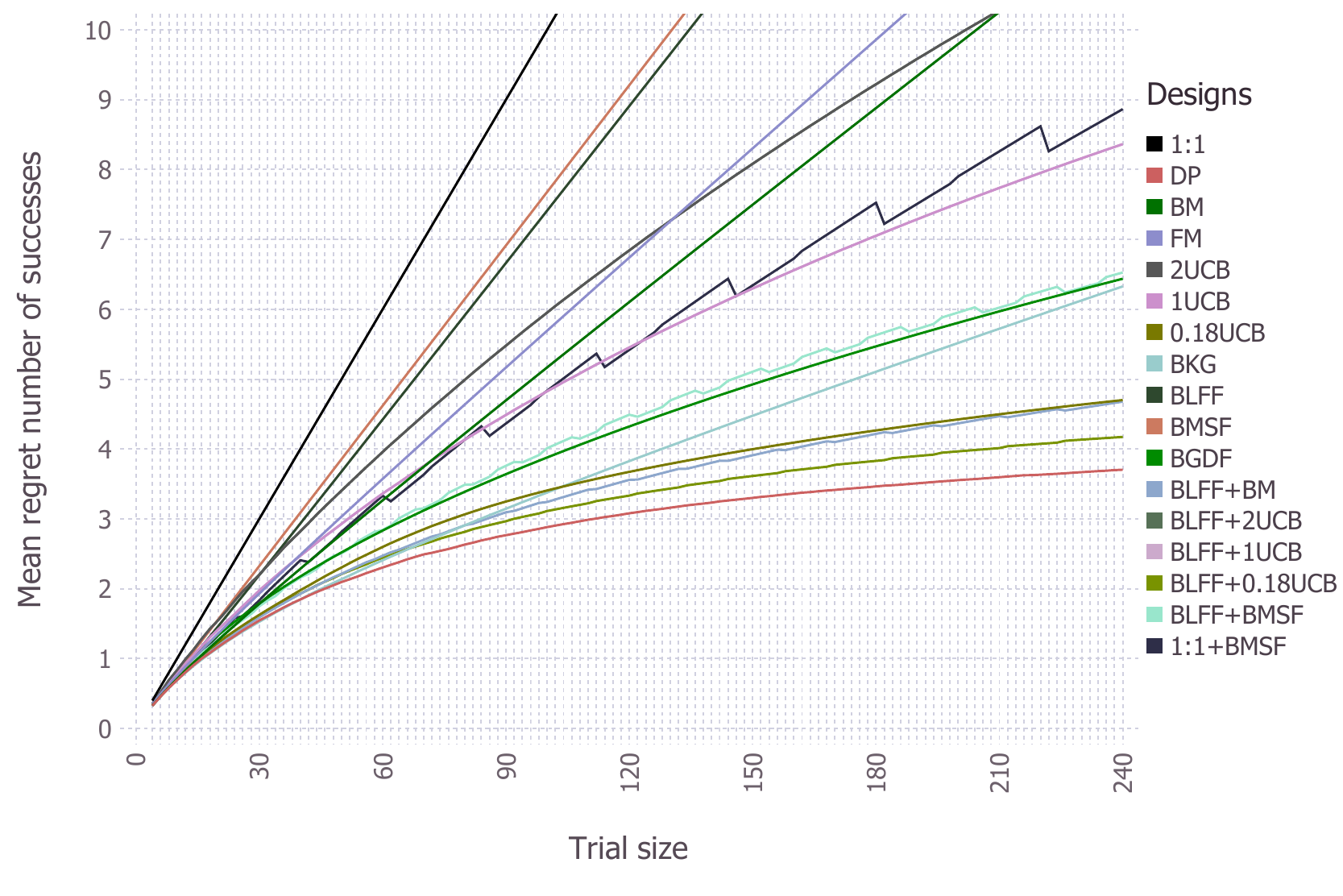}
    \end{subfigure}\hfill%
    \begin{subfigure}[b]{0.48\textwidth}
        \includegraphics[trim=0pt 0pt 0pt 0pt, clip=true, width=\textwidth]{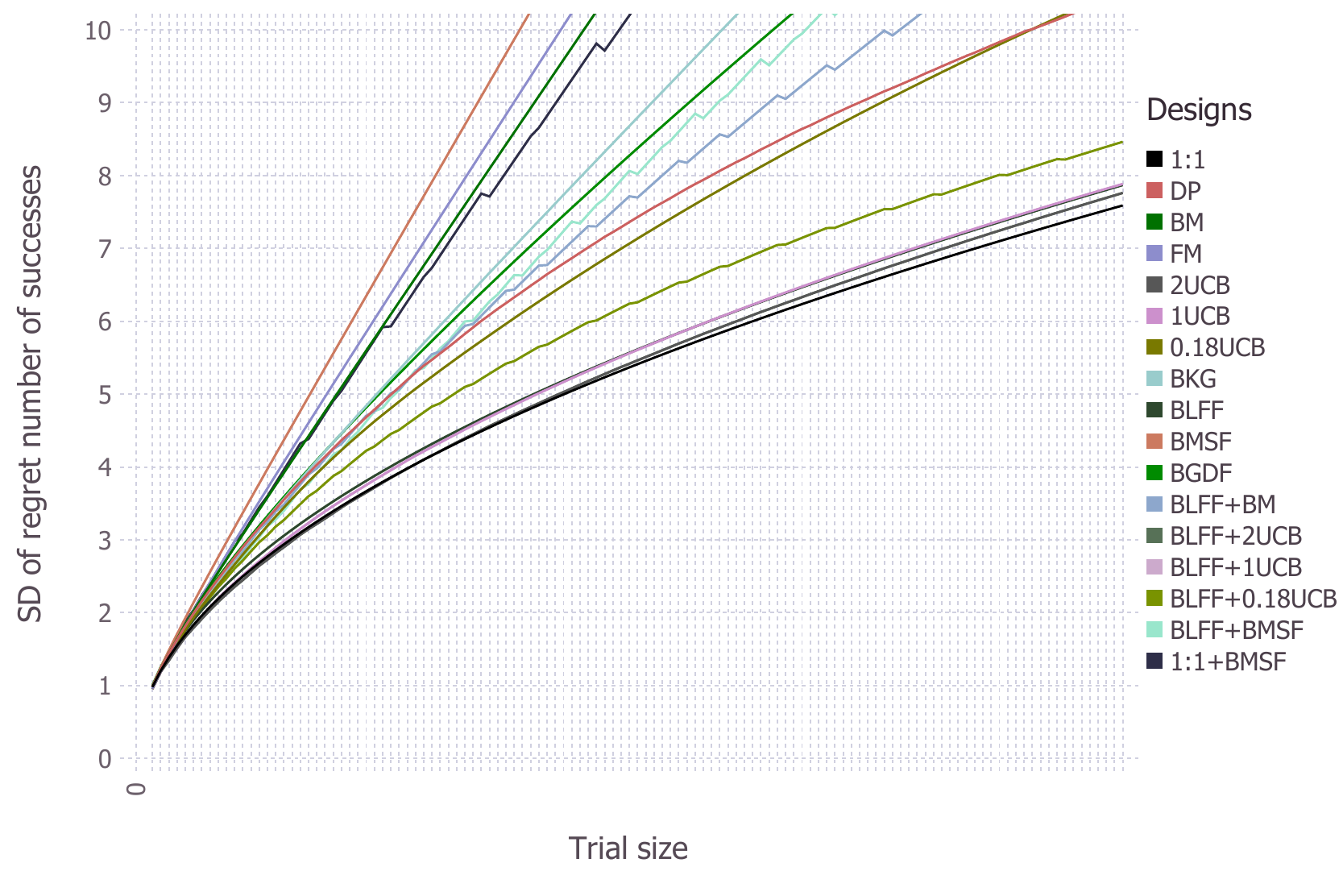}
    \end{subfigure}%

    \begin{subfigure}[b]{0.48\textwidth}
        \includegraphics[trim=0pt 0pt 0pt 0pt, clip=true, width=\textwidth]{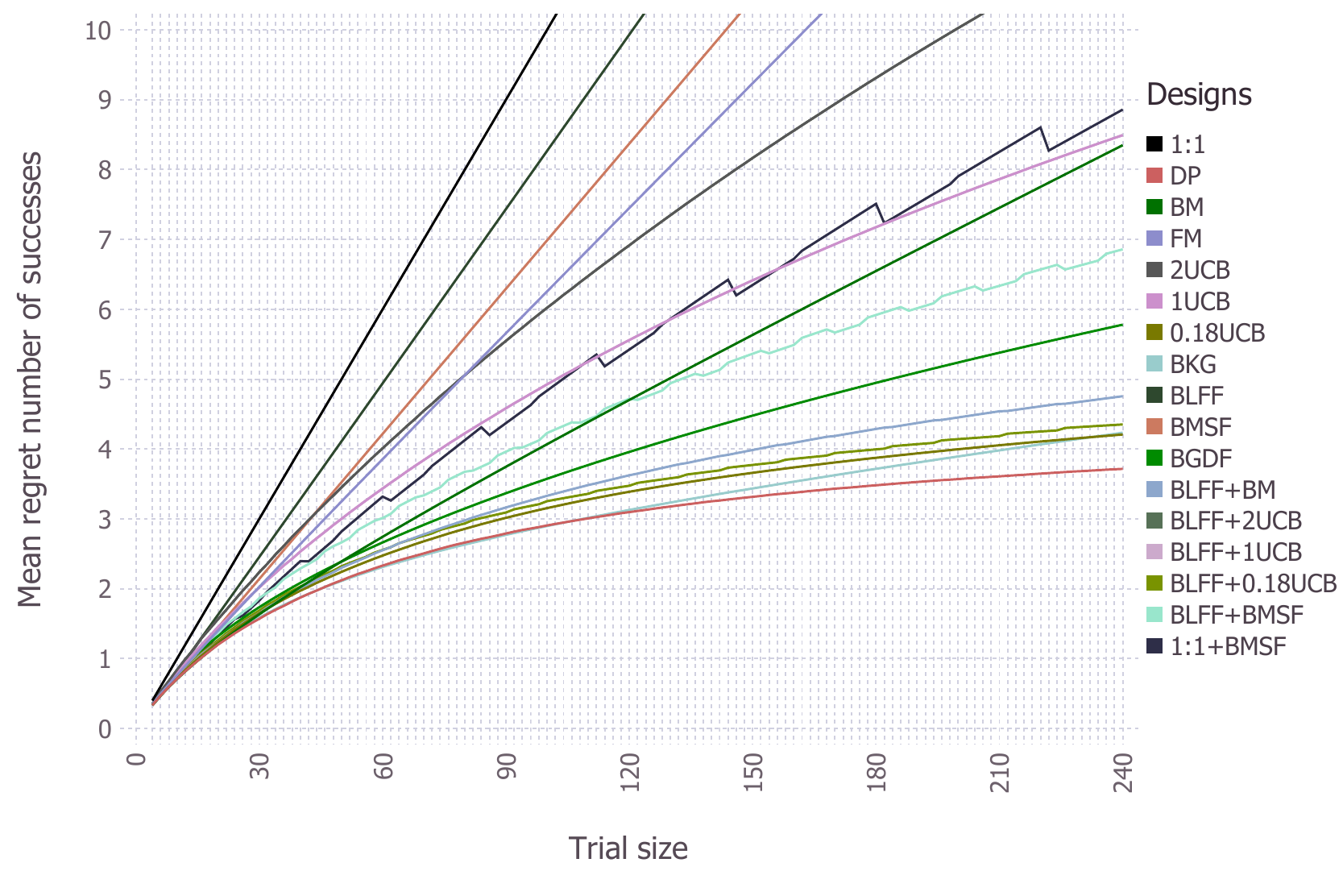}
    \end{subfigure}\hfill%
    \begin{subfigure}[b]{0.48\textwidth}
        \includegraphics[trim=0pt 0pt 0pt 0pt, clip=true, width=\textwidth]{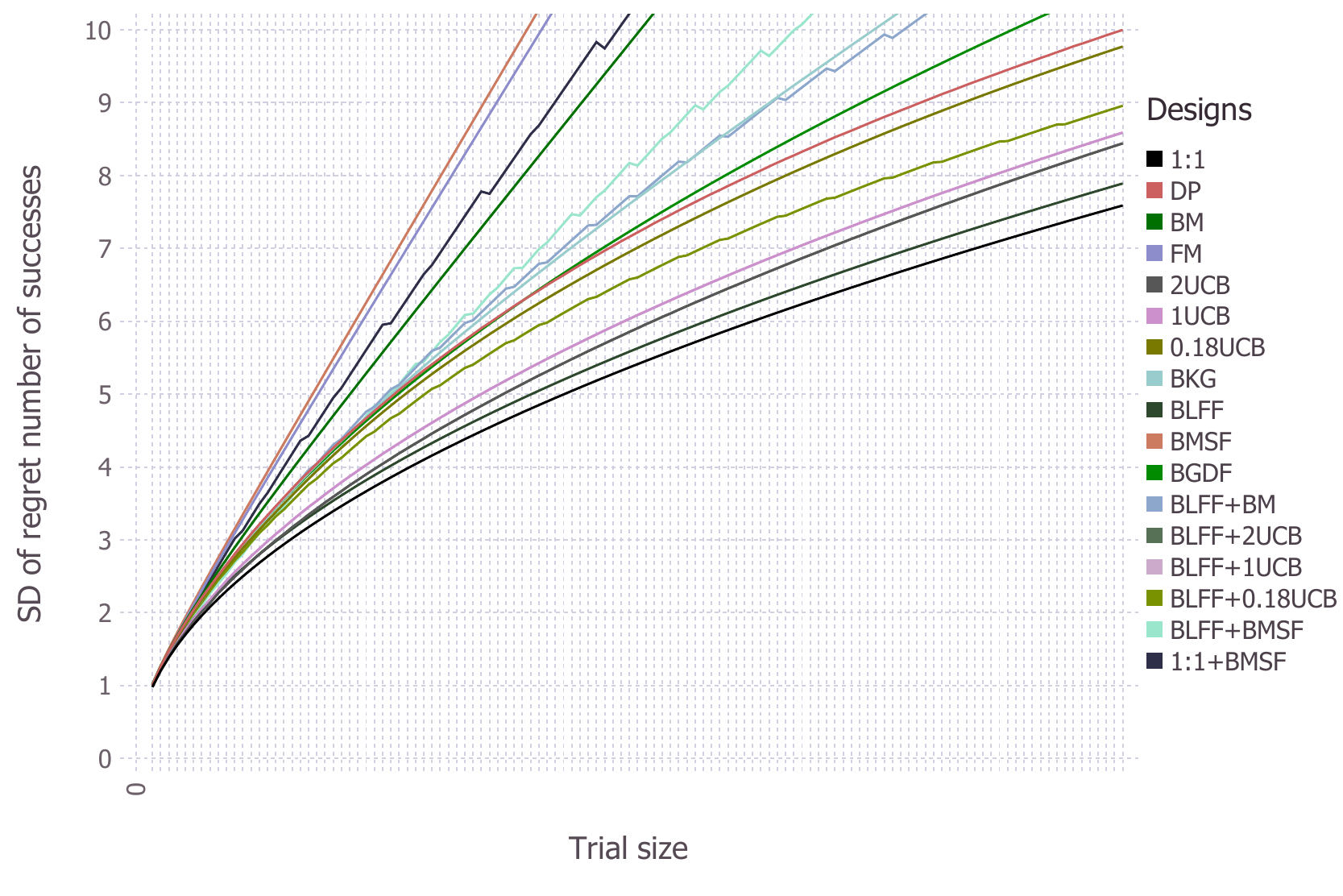}
    \end{subfigure}%

    \begin{subfigure}[b]{0.48\textwidth}
        \includegraphics[trim=0pt 0pt 0pt 0pt, clip=true, width=\textwidth]{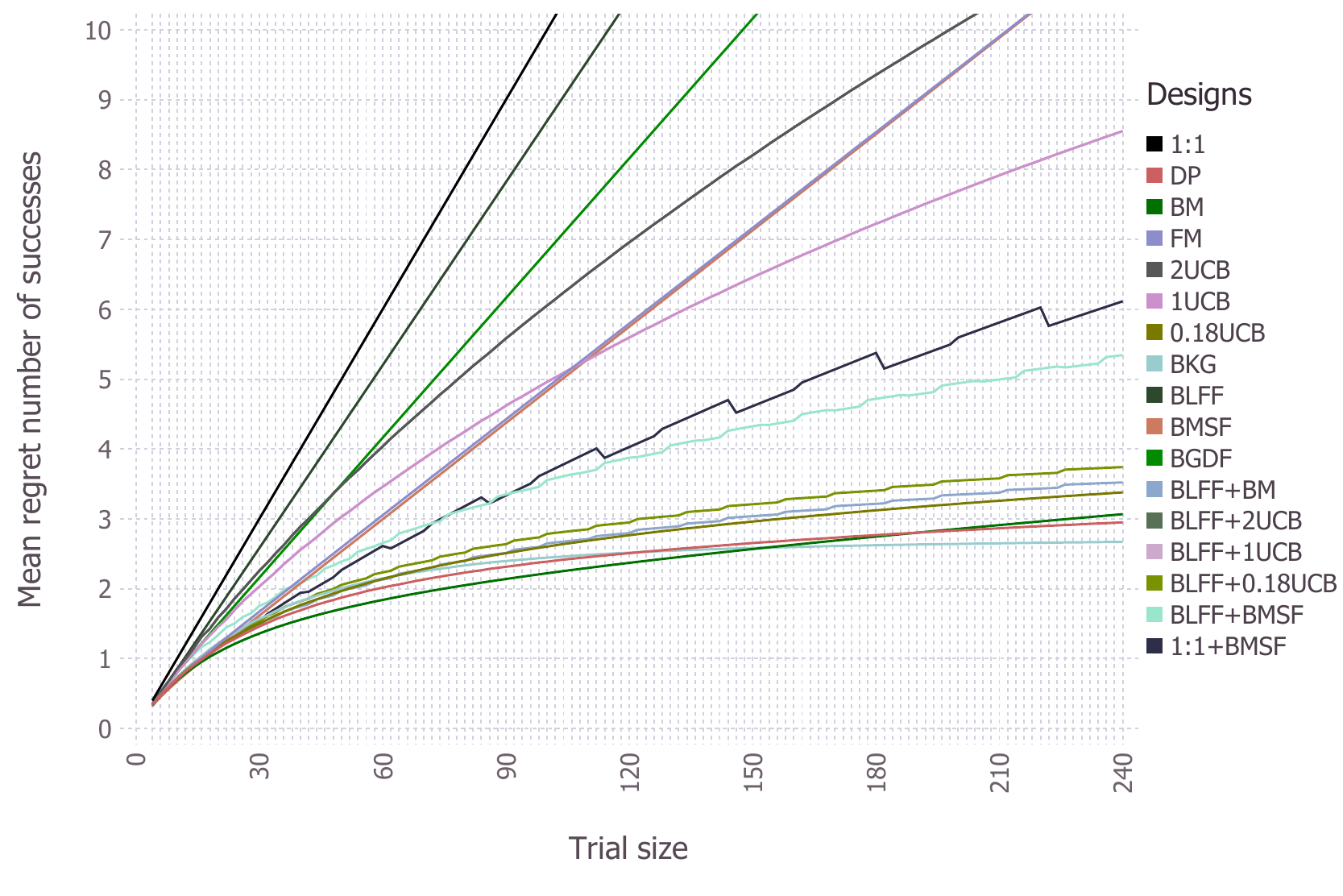}
    \end{subfigure}\hfill%
    \begin{subfigure}[b]{0.48\textwidth}
        \includegraphics[trim=0pt 0pt 0pt 0pt, clip=true, width=\textwidth]{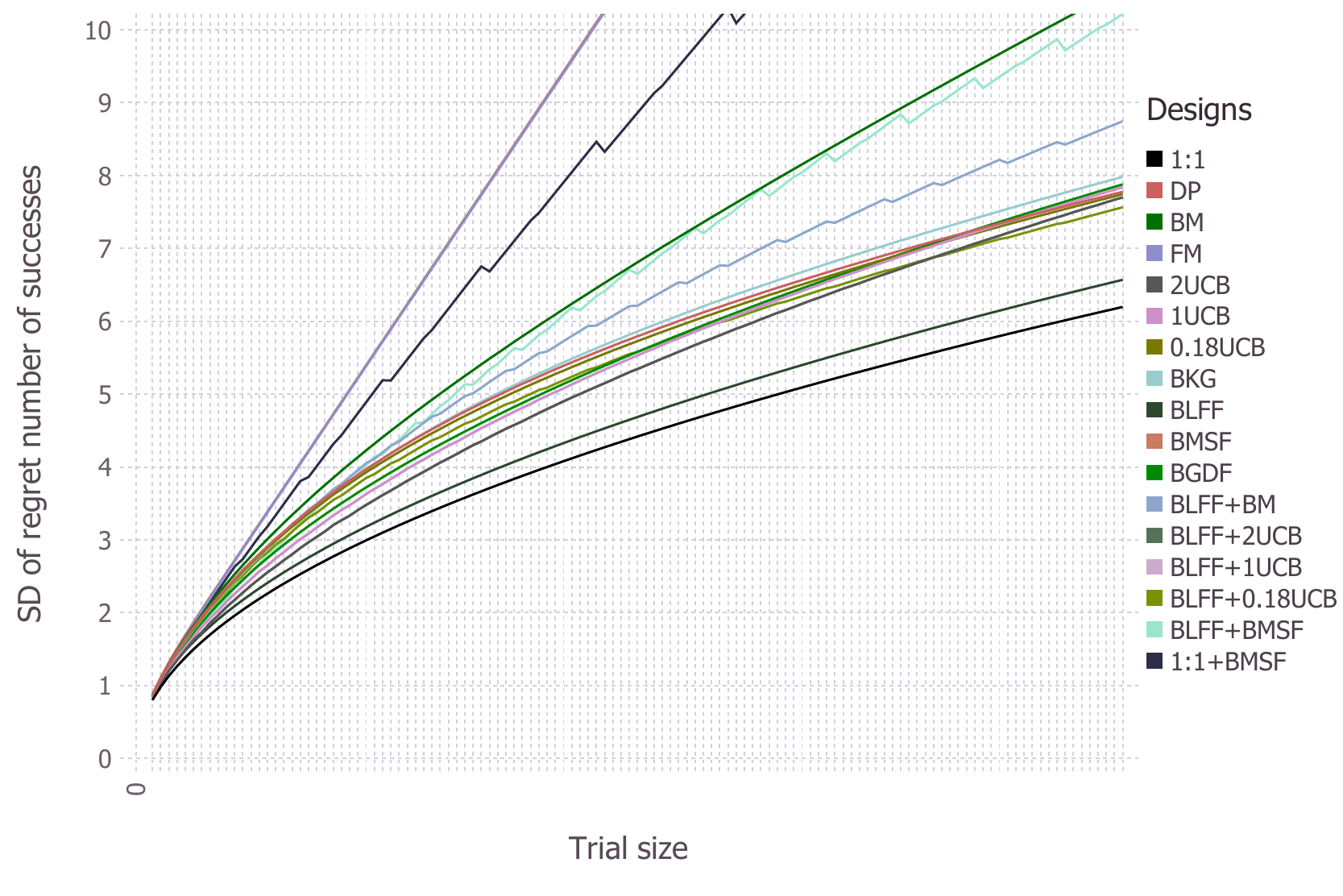}
    \end{subfigure}%
\caption{An illustration of performance (mean on the left, standard deviation on the right) in terms of the proportion of successes evaluated for
deterministic designs over a range of small trial sizes, for $ ( \theta_{ C } , \theta_{ D } ) = ( 0.7 , 0.9 ) $ in the first row, $ ( 0.5 , 0.7 ) $ in the
second row, $ ( 0.3 , 0.5 ) $ in the third row, $ ( 0.1 , 0.3 ) $ in the fourth row.}\label{fig:regret_240}
\end{figure}

\section{Designs Performance --- Tables}
\label{section:performance_tables}

In \autoref{table:performance79}, \autoref{table:performance57}, \autoref{table:performance35}, \autoref{table:performance13} we present values of the mean
regret number of successes (rounded to $ 3 $ significant digits) for trial sizes $ 60 : 60 : 1200 $, which is a subset of those presented in the figures in
\autoref{section:performance} and \autoref{section:performance_continued}. In bold font are all the values which are equal or better than that of
\design{DP} for each trial size. We further highlight with dark grey background (light grey background) the values that are lower than or equal to $ 10\% $
(greater than $ 10\% $ and lower than or equal to $ 20 \% $) of the difference between \design{DP} and \design{2UCB}.

\begin{sidewaystable}[tbp]
\footnotesize
\setlength{\tabcolsep}{5pt}
\centering
\begin{tabular}{l|cccccccccccccccccccc}
\toprule
Design $ \backslash $ Trial size & 60 & 120 & 180 & 240 & 300 & 360 & 420 & 480 & 540 & 600 & 660 & 720 & 780 & 840 & 900 & 960 & 1020 & 1080 & 1140 & 1200 \\
\midrule
1:1 & \myboxyyyyy{6.0} & \myboxyyyyy{12.0} & \myboxyyyyy{18.0} & \myboxyyyyy{24.0} & \myboxyyyyy{30.0} & \myboxyyyyy{36.0} & \myboxyyyyy{42.0} & \myboxyyyyy{48.0} & \myboxyyyyy{54.0} & \myboxyyyyy{60.0} & \myboxyyyyy{66.0} & \myboxyyyyy{72.0} & \myboxyyyyy{78.0} & \myboxyyyyy{84.0} & \myboxyyyyy{90.0} & \myboxyyyyy{96.0} & \myboxyyyyy{102} & \myboxyyyyy{108} & \myboxyyyyy{114} & \myboxyyyyy{120} \\
DP & \myboxx{1.85} & \myboxx{2.36} & \myboxx{2.54} & \myboxx{2.58} & \myboxx{2.57} & \myboxx{2.57} & \myboxx{2.58} & \myboxx{2.61} & \myboxx{2.62} & \myboxx{2.63} & \myboxx{2.64} & \myboxx{2.63} & \myboxx{2.62} & \myboxx{2.61} & \myboxx{2.6} & \myboxx{2.58} & \myboxx{2.57} & \myboxx{2.55} & \myboxx{2.55} & \myboxx{2.54} \\
BM & \myboxyyyyy{3.82} & \myboxyyyyy{7.53} & \myboxyyyyy{11.2} & \myboxyyyyy{15.0} & \myboxyyyyy{18.7} & \myboxyyyyy{22.4} & \myboxyyyyy{26.1} & \myboxyyyyy{29.8} & \myboxyyyyy{33.5} & \myboxyyyyy{37.2} & \myboxyyyyy{40.9} & \myboxyyyyy{44.6} & \myboxyyyyy{48.4} & \myboxyyyyy{52.1} & \myboxyyyyy{55.8} & \myboxyyyyy{59.5} & \myboxyyyyy{63.2} & \myboxyyyyy{66.9} & \myboxyyyyy{70.6} & \myboxyyyyy{74.3} \\
FM & \myboxyyyyy{2.5} & \myboxyyyyy{4.36} & \myboxyyyyy{6.18} & \myboxyyyyy{7.99} & \myboxyyyyy{9.78} & \myboxyyyyy{11.6} & \myboxyyyyy{13.4} & \myboxyyyyy{15.2} & \myboxyyyyy{16.9} & \myboxyyyyy{18.7} & \myboxyyyyy{20.5} & \myboxyyyyy{22.3} & \myboxyyyyy{24.1} & \myboxyyyyy{25.9} & \myboxyyyyy{27.6} & \myboxyyyyy{29.4} & \myboxyyyyy{31.2} & \myboxyyyyy{33.0} & \myboxyyyyy{34.8} & \myboxyyyyy{36.6} \\
2UCB & \myboxyyyyy{3.94} & \myboxyyyyy{6.81} & \myboxyyyyy{9.15} & \myboxyyyyy{11.2} & \myboxyyyyy{13.0} & \myboxyyyyy{14.6} & \myboxyyyyy{16.0} & \myboxyyyyy{17.4} & \myboxyyyyy{18.6} & \myboxyyyyy{19.8} & \myboxyyyyy{20.9} & \myboxyyyyy{21.9} & \myboxyyyyy{22.9} & \myboxyyyyy{23.8} & \myboxyyyyy{24.7} & \myboxyyyyy{25.5} & \myboxyyyyy{26.3} & \myboxyyyyy{27.0} & \myboxyyyyy{27.8} & \myboxyyyyy{28.5} \\
1UCB & \myboxyyyyy{3.26} & \myboxyyyyy{5.31} & \myboxyyyyy{6.9} & \myboxyyyyy{8.2} & \myboxyyyyy{9.33} & \myboxyyyyy{10.3} & \myboxyyyyy{11.2} & \myboxyyyyy{12.0} & \myboxyyyyy{12.7} & \myboxyyyyy{13.4} & \myboxyyyyy{14.0} & \myboxyyyyy{14.5} & \myboxyyyyy{15.1} & \myboxyyyyy{15.6} & \myboxyyyyy{16.0} & \myboxyyyyy{16.5} & \myboxyyyyy{16.9} & \myboxyyyyy{17.3} & \myboxyyyyy{17.7} & \myboxyyyyy{18.1} \\
0.18UCB & \myboxyyy{2.15} & \myboxy{2.76} & \myboxyyy{3.29} & \myboxyyy{3.84} & \myboxyyy{4.17} & \myboxyyy{4.43} & \myboxyyy{4.65} & \myboxyyy{4.86} & \myboxyyy{5.05} & \myboxyyy{5.21} & \myboxyyy{5.35} & \myboxyyy{5.46} & \myboxyyy{5.56} & \myboxyyy{5.64} & \myboxyyy{5.72} & \myboxyyy{5.79} & \myboxyyy{5.85} & \myboxyyy{5.92} & \myboxyyy{5.98} & \myboxyyy{6.04} \\
BKG & \myboxyyyyy{2.48} & \myboxyyyyy{4.45} & \myboxyyyyy{6.36} & \myboxyyyyy{8.25} & \myboxyyyyy{10.1} & \myboxyyyyy{12.0} & \myboxyyyyy{13.9} & \myboxyyyyy{15.7} & \myboxyyyyy{17.6} & \myboxyyyyy{19.4} & \myboxyyyyy{21.3} & \myboxyyyyy{23.1} & \myboxyyyyy{25.0} & \myboxyyyyy{26.8} & \myboxyyyyy{28.7} & \myboxyyyyy{30.5} & \myboxyyyyy{32.4} & \myboxyyyyy{34.2} & \myboxyyyyy{36.1} & \myboxyyyyy{37.9} \\
BLFF & \myboxyyyyy{2.9} & \myboxyyyyy{5.88} & \myboxyyyyy{8.88} & \myboxyyyyy{11.9} & \myboxyyyyy{14.9} & \myboxyyyyy{17.9} & \myboxyyyyy{20.9} & \myboxyyyyy{23.9} & \myboxyyyyy{26.9} & \myboxyyyyy{29.9} & \myboxyyyyy{32.9} & \myboxyyyyy{35.9} & \myboxyyyyy{38.9} & \myboxyyyyy{41.9} & \myboxyyyyy{44.9} & \myboxyyyyy{47.9} & \myboxyyyyy{50.9} & \myboxyyyyy{53.9} & \myboxyyyyy{56.9} & \myboxyyyyy{59.9} \\
BMSF & \myboxyyyyy{4.78} & \myboxyyyyy{9.55} & \myboxyyyyy{14.3} & \myboxyyyyy{19.1} & \myboxyyyyy{23.8} & \myboxyyyyy{28.6} & \myboxyyyyy{33.4} & \myboxyyyyy{38.1} & \myboxyyyyy{42.9} & \myboxyyyyy{47.7} & \myboxyyyyy{52.4} & \myboxyyyyy{57.2} & \myboxyyyyy{61.9} & \myboxyyyyy{66.7} & \myboxyyyyy{71.5} & \myboxyyyyy{76.2} & \myboxyyyyy{81.0} & \myboxyyyyy{85.8} & \myboxyyyyy{90.5} & \myboxyyyyy{95.3} \\
BGDF & \myboxyyyyy{4.25} & \myboxyyyyy{8.4} & \myboxyyyyy{12.6} & \myboxyyyyy{16.7} & \myboxyyyyy{20.9} & \myboxyyyyy{25.0} & \myboxyyyyy{29.2} & \myboxyyyyy{33.3} & \myboxyyyyy{37.5} & \myboxyyyyy{41.6} & \myboxyyyyy{45.8} & \myboxyyyyy{49.9} & \myboxyyyyy{54.1} & \myboxyyyyy{58.2} & \myboxyyyyy{62.4} & \myboxyyyyy{66.5} & \myboxyyyyy{70.7} & \myboxyyyyy{74.8} & \myboxyyyyy{79.0} & \myboxyyyyy{83.1} \\
BLFF+BM & \myboxx{1.82} & \myboxy{2.39} & \myboxy{2.65} & \myboxy{2.8} & \myboxy{2.88} & \myboxy{2.96} & \myboxy{3.02} & \myboxy{3.06} & \myboxy{3.11} & \myboxy{3.16} & \myboxy{3.2} & \myboxy{3.25} & \myboxy{3.3} & \myboxy{3.35} & \myboxy{3.42} & \myboxy{3.46} & \myboxy{3.51} & \myboxy{3.57} & \myboxy{3.63} & \myboxy{3.7} \\
BLFF+2UCB & \myboxyyyyy{3.94} & \myboxyyyyy{6.81} & \myboxyyyyy{9.15} & \myboxyyyyy{11.2} & \myboxyyyyy{13.0} & \myboxyyyyy{14.6} & \myboxyyyyy{16.0} & \myboxyyyyy{17.4} & \myboxyyyyy{18.6} & \myboxyyyyy{19.8} & \myboxyyyyy{20.9} & \myboxyyyyy{21.9} & \myboxyyyyy{22.9} & \myboxyyyyy{23.8} & \myboxyyyyy{24.7} & \myboxyyyyy{25.5} & \myboxyyyyy{26.3} & \myboxyyyyy{27.0} & \myboxyyyyy{27.8} & \myboxyyyyy{28.5} \\
BLFF+1UCB & \myboxyyyyy{3.26} & \myboxyyyyy{5.31} & \myboxyyyyy{6.9} & \myboxyyyyy{8.2} & \myboxyyyyy{9.33} & \myboxyyyyy{10.3} & \myboxyyyyy{11.2} & \myboxyyyyy{12.0} & \myboxyyyyy{12.7} & \myboxyyyyy{13.4} & \myboxyyyyy{14.0} & \myboxyyyyy{14.5} & \myboxyyyyy{15.1} & \myboxyyyyy{15.6} & \myboxyyyyy{16.0} & \myboxyyyyy{16.5} & \myboxyyyyy{16.9} & \myboxyyyyy{17.3} & \myboxyyyyy{17.7} & \myboxyyyyy{18.1} \\
BLFF+0.18UCB & \myboxy{1.89} & \myboxy{2.45} & \myboxy{2.85} & \myboxy{3.19} & \myboxy{3.43} & \myboxy{3.62} & \myboxy{3.8} & \myboxy{3.96} & \myboxy{4.12} & \myboxy{4.25} & \myboxy{4.39} & \myboxy{4.51} & \myboxy{4.62} & \myboxy{4.72} & \myboxyyy{4.84} & \myboxyyy{4.92} & \myboxyyy{5.01} & \myboxyyy{5.1} & \myboxyyy{5.19} & \myboxyyy{5.27} \\
BLFF+BMSF & \myboxy{1.96} & \myboxyyy{2.88} & \myboxyyy{3.42} & \myboxyyy{3.82} & \myboxyyy{4.06} & \myboxyyy{4.37} & \myboxyyy{4.49} & \myboxyyy{4.67} & \myboxyyy{4.82} & \myboxyyy{4.95} & \myboxyyy{5.08} & \myboxyyy{5.25} & \myboxyyy{5.3} & \myboxyyy{5.46} & \myboxyyy{5.55} & \myboxyyy{5.65} & \myboxyyy{5.7} & \myboxyyy{5.83} & \myboxyyy{5.88} & \myboxyyy{5.96} \\
1:1+BMSF & \myboxyyyyy{2.81} & \myboxyyyyy{4.15} & \myboxyyyyy{5.59} & \myboxyyyyy{6.24} & \myboxyyyyy{7.13} & \myboxyyyyy{7.83} & \myboxyyyyy{8.39} & \myboxyyyyy{8.82} & \myboxyyyyy{9.16} & \myboxyyyyy{9.41} & \myboxyyyyy{9.58} & \myboxyyyyy{9.66} & \myboxyyyyy{9.73} & \myboxyyyyy{10.4} & \myboxyyyyy{10.4} & \myboxyyyyy{10.3} & \myboxyyyyy{10.2} & \myboxyyyyy{10.7} & \myboxyyyyy{10.5} & \myboxyyyyy{11.0} \\
\bottomrule
\end{tabular}
\caption{The mean regret number of successes under success probabilities 0.7 and 0.9.}\label{table:performance79}
\end{sidewaystable}

\begin{sidewaystable}[tbp]
\footnotesize
\setlength{\tabcolsep}{5pt}
\centering
\begin{tabular}{l|cccccccccccccccccccc}
\toprule
Design $ \backslash $ Trial size & 60 & 120 & 180 & 240 & 300 & 360 & 420 & 480 & 540 & 600 & 660 & 720 & 780 & 840 & 900 & 960 & 1020 & 1080 & 1140 & 1200 \\
\midrule
1:1 & \myboxyyyyy{6.0} & \myboxyyyyy{12.0} & \myboxyyyyy{18.0} & \myboxyyyyy{24.0} & \myboxyyyyy{30.0} & \myboxyyyyy{36.0} & \myboxyyyyy{42.0} & \myboxyyyyy{48.0} & \myboxyyyyy{54.0} & \myboxyyyyy{60.0} & \myboxyyyyy{66.0} & \myboxyyyyy{72.0} & \myboxyyyyy{78.0} & \myboxyyyyy{84.0} & \myboxyyyyy{90.0} & \myboxyyyyy{96.0} & \myboxyyyyy{102} & \myboxyyyyy{108} & \myboxyyyyy{114} & \myboxyyyyy{120} \\
DP & \myboxx{2.3} & \myboxx{3.08} & \myboxx{3.47} & \myboxx{3.7} & \myboxx{3.86} & \myboxx{3.96} & \myboxx{4.05} & \myboxx{4.12} & \myboxx{4.19} & \myboxx{4.25} & \myboxx{4.3} & \myboxx{4.35} & \myboxx{4.39} & \myboxx{4.43} & \myboxx{4.48} & \myboxx{4.51} & \myboxx{4.55} & \myboxx{4.58} & \myboxx{4.62} & \myboxx{4.65} \\
BM & \myboxyyyyy{3.27} & \myboxyyyyy{6.1} & \myboxyyyyy{8.88} & \myboxyyyyy{11.6} & \myboxyyyyy{14.4} & \myboxyyyyy{17.1} & \myboxyyyyy{19.8} & \myboxyyyyy{22.5} & \myboxyyyyy{25.1} & \myboxyyyyy{27.8} & \myboxyyyyy{30.5} & \myboxyyyyy{33.2} & \myboxyyyyy{35.8} & \myboxyyyyy{38.5} & \myboxyyyyy{41.2} & \myboxyyyyy{43.8} & \myboxyyyyy{46.5} & \myboxyyyyy{49.1} & \myboxyyyyy{51.8} & \myboxyyyyy{54.4} \\
FM & \myboxyyyyy{3.57} & \myboxyyyyy{6.74} & \myboxyyyyy{9.86} & \myboxyyyyy{13.0} & \myboxyyyyy{16.0} & \myboxyyyyy{19.1} & \myboxyyyyy{22.2} & \myboxyyyyy{25.3} & \myboxyyyyy{28.3} & \myboxyyyyy{31.4} & \myboxyyyyy{34.4} & \myboxyyyyy{37.5} & \myboxyyyyy{40.5} & \myboxyyyyy{43.6} & \myboxyyyyy{46.6} & \myboxyyyyy{49.7} & \myboxyyyyy{52.7} & \myboxyyyyy{55.8} & \myboxyyyyy{58.8} & \myboxyyyyy{61.9} \\
2UCB & \myboxyyyyy{3.96} & \myboxyyyyy{6.84} & \myboxyyyyy{9.22} & \myboxyyyyy{11.3} & \myboxyyyyy{13.1} & \myboxyyyyy{14.7} & \myboxyyyyy{16.2} & \myboxyyyyy{17.5} & \myboxyyyyy{18.8} & \myboxyyyyy{19.9} & \myboxyyyyy{21.0} & \myboxyyyyy{22.0} & \myboxyyyyy{23.0} & \myboxyyyyy{23.9} & \myboxyyyyy{24.8} & \myboxyyyyy{25.6} & \myboxyyyyy{26.4} & \myboxyyyyy{27.1} & \myboxyyyyy{27.9} & \myboxyyyyy{28.6} \\
1UCB & \myboxyyyyy{3.36} & \myboxyyyyy{5.45} & \myboxyyyyy{7.05} & \myboxyyyyy{8.36} & \myboxyyyyy{9.48} & \myboxyyyyy{10.5} & \myboxyyyyy{11.3} & \myboxyyyyy{12.1} & \myboxyyyyy{12.8} & \myboxyyyyy{13.5} & \myboxyyyyy{14.1} & \myboxyyyyy{14.6} & \myboxyyyyy{15.2} & \myboxyyyyy{15.6} & \myboxyyyyy{16.1} & \myboxyyyyy{16.5} & \myboxyyyyy{17.0} & \myboxyyyyy{17.4} & \myboxyyyyy{17.7} & \myboxyyyyy{18.1} \\
0.18UCB & \myboxyyy{2.6} & \myboxyyy{3.67} & \myboxyyy{4.27} & \myboxyyy{4.7} & \myboxyyy{5.05} & \myboxyyy{5.36} & \myboxyyy{5.64} & \myboxyyy{5.9} & \myboxyyy{6.15} & \myboxyyy{6.36} & \myboxyyy{6.54} & \myboxyyy{6.69} & \myboxyyy{6.82} & \myboxyyy{6.93} & \myboxyyy{7.04} & \myboxyyy{7.13} & \myboxyyy{7.21} & \myboxyyy{7.3} & \myboxyyy{7.37} & \myboxyyy{7.45} \\
BKG & \myboxy{2.41} & \myboxyyy{3.83} & \myboxyyyyy{5.11} & \myboxyyyyy{6.33} & \myboxyyyyy{7.52} & \myboxyyyyy{8.68} & \myboxyyyyy{9.83} & \myboxyyyyy{11.0} & \myboxyyyyy{12.1} & \myboxyyyyy{13.2} & \myboxyyyyy{14.3} & \myboxyyyyy{15.4} & \myboxyyyyy{16.5} & \myboxyyyyy{17.6} & \myboxyyyyy{18.7} & \myboxyyyyy{19.8} & \myboxyyyyy{20.9} & \myboxyyyyy{22.0} & \myboxyyyyy{23.0} & \myboxyyyyy{24.1} \\
BLFF & \myboxyyyyy{4.42} & \myboxyyyyy{8.91} & \myboxyyyyy{13.4} & \myboxyyyyy{17.9} & \myboxyyyyy{22.4} & \myboxyyyyy{26.9} & \myboxyyyyy{31.4} & \myboxyyyyy{35.9} & \myboxyyyyy{40.4} & \myboxyyyyy{44.9} & \myboxyyyyy{49.4} & \myboxyyyyy{53.9} & \myboxyyyyy{58.4} & \myboxyyyyy{62.9} & \myboxyyyyy{67.4} & \myboxyyyyy{71.9} & \myboxyyyyy{76.4} & \myboxyyyyy{80.9} & \myboxyyyyy{85.4} & \myboxyyyyy{89.9} \\
BMSF & \myboxyyyyy{4.62} & \myboxyyyyy{9.21} & \myboxyyyyy{13.8} & \myboxyyyyy{18.4} & \myboxyyyyy{23.0} & \myboxyyyyy{27.6} & \myboxyyyyy{32.1} & \myboxyyyyy{36.7} & \myboxyyyyy{41.3} & \myboxyyyyy{45.9} & \myboxyyyyy{50.5} & \myboxyyyyy{55.1} & \myboxyyyyy{59.7} & \myboxyyyyy{64.3} & \myboxyyyyy{68.9} & \myboxyyyyy{73.4} & \myboxyyyyy{78.0} & \myboxyyyyy{82.6} & \myboxyyyyy{87.2} & \myboxyyyyy{91.8} \\
BGDF & \myboxyyyyy{2.83} & \myboxyyyyy{4.32} & \myboxyyyyy{5.47} & \myboxyyyyy{6.44} & \myboxyyyyy{7.29} & \myboxyyyyy{8.06} & \myboxyyyyy{8.77} & \myboxyyyyy{9.43} & \myboxyyyyy{10.1} & \myboxyyyyy{10.6} & \myboxyyyyy{11.2} & \myboxyyyyy{11.7} & \myboxyyyyy{12.2} & \myboxyyyyy{12.7} & \myboxyyyyy{13.2} & \myboxyyyyy{13.7} & \myboxyyyyy{14.1} & \myboxyyyyy{14.5} & \myboxyyyyy{15.0} & \myboxyyyyy{15.4} \\
BLFF+BM & \myboxyyy{2.47} & \myboxyyy{3.56} & \myboxyyy{4.22} & \myboxyyy{4.68} & \myboxyyy{5.0} & \myboxyyy{5.27} & \myboxyyy{5.48} & \myboxyyy{5.65} & \myboxyyy{5.78} & \myboxyyy{5.93} & \myboxyyy{6.02} & \myboxyyy{6.13} & \myboxy{6.23} & \myboxy{6.32} & \myboxy{6.39} & \myboxy{6.48} & \myboxy{6.56} & \myboxy{6.63} & \myboxy{6.7} & \myboxy{6.77} \\
BLFF+2UCB & \myboxyyyyy{3.96} & \myboxyyyyy{6.84} & \myboxyyyyy{9.22} & \myboxyyyyy{11.3} & \myboxyyyyy{13.1} & \myboxyyyyy{14.7} & \myboxyyyyy{16.2} & \myboxyyyyy{17.5} & \myboxyyyyy{18.8} & \myboxyyyyy{19.9} & \myboxyyyyy{21.0} & \myboxyyyyy{22.0} & \myboxyyyyy{23.0} & \myboxyyyyy{23.9} & \myboxyyyyy{24.8} & \myboxyyyyy{25.6} & \myboxyyyyy{26.4} & \myboxyyyyy{27.1} & \myboxyyyyy{27.9} & \myboxyyyyy{28.6} \\
BLFF+1UCB & \myboxyyyyy{3.36} & \myboxyyyyy{5.45} & \myboxyyyyy{7.05} & \myboxyyyyy{8.36} & \myboxyyyyy{9.48} & \myboxyyyyy{10.5} & \myboxyyyyy{11.3} & \myboxyyyyy{12.1} & \myboxyyyyy{12.8} & \myboxyyyyy{13.5} & \myboxyyyyy{14.1} & \myboxyyyyy{14.6} & \myboxyyyyy{15.2} & \myboxyyyyy{15.6} & \myboxyyyyy{16.1} & \myboxyyyyy{16.6} & \myboxyyyyy{17.0} & \myboxyyyyy{17.4} & \myboxyyyyy{17.7} & \myboxyyyyy{18.1} \\
BLFF+0.18UCB & \myboxy{2.44} & \myboxy{3.33} & \myboxy{3.83} & \myboxy{4.17} & \myboxy{4.45} & \myboxy{4.67} & \myboxy{4.87} & \myboxy{5.06} & \myboxy{5.24} & \myboxy{5.38} & \myboxy{5.54} & \myboxy{5.68} & \myboxy{5.8} & \myboxy{5.93} & \myboxy{6.09} & \myboxy{6.17} & \myboxy{6.29} & \myboxy{6.41} & \myboxy{6.52} & \myboxy{6.64} \\
BLFF+BMSF & \myboxyyyyy{2.84} & \myboxyyyyy{4.49} & \myboxyyyyy{5.63} & \myboxyyyyy{6.53} & \myboxyyyyy{7.17} & \myboxyyyyy{7.86} & \myboxyyyyy{8.25} & \myboxyyyyy{8.69} & \myboxyyyyy{9.07} & \myboxyyyyy{9.4} & \myboxyyyyy{9.69} & \myboxyyyyy{10.1} & \myboxyyyyy{10.2} & \myboxyyyyy{10.6} & \myboxyyyyy{10.7} & \myboxyyyyy{11.0} & \myboxyyyyy{11.0} & \myboxyyyyy{11.3} & \myboxyyyyy{11.4} & \myboxyyyyy{11.5} \\
1:1+BMSF & \myboxyyyyy{3.33} & \myboxyyyyy{5.42} & \myboxyyyyy{7.53} & \myboxyyyyy{8.87} & \myboxyyyyy{10.4} & \myboxyyyyy{11.7} & \myboxyyyyy{12.9} & \myboxyyyyy{13.9} & \myboxyyyyy{14.7} & \myboxyyyyy{15.5} & \myboxyyyyy{16.1} & \myboxyyyyy{16.7} & \myboxyyyyy{17.1} & \myboxyyyyy{18.3} & \myboxyyyyy{18.7} & \myboxyyyyy{18.9} & \myboxyyyyy{19.1} & \myboxyyyyy{20.1} & \myboxyyyyy{20.2} & \myboxyyyyy{21.2} \\
\bottomrule
\end{tabular}
\caption{The mean regret number of successes under success probabilities 0.5 and 0.7.}\label{table:performance57}
\end{sidewaystable}

\begin{sidewaystable}[tbp]
\footnotesize
\setlength{\tabcolsep}{5pt}
\centering
\begin{tabular}{l|cccccccccccccccccccc}
\toprule
Design $ \backslash $ Trial size & 60 & 120 & 180 & 240 & 300 & 360 & 420 & 480 & 540 & 600 & 660 & 720 & 780 & 840 & 900 & 960 & 1020 & 1080 & 1140 & 1200 \\
\midrule
1:1 & \myboxyyyyy{6.0} & \myboxyyyyy{12.0} & \myboxyyyyy{18.0} & \myboxyyyyy{24.0} & \myboxyyyyy{30.0} & \myboxyyyyy{36.0} & \myboxyyyyy{42.0} & \myboxyyyyy{48.0} & \myboxyyyyy{54.0} & \myboxyyyyy{60.0} & \myboxyyyyy{66.0} & \myboxyyyyy{72.0} & \myboxyyyyy{78.0} & \myboxyyyyy{84.0} & \myboxyyyyy{90.0} & \myboxyyyyy{96.0} & \myboxyyyyy{102} & \myboxyyyyy{108} & \myboxyyyyy{114} & \myboxyyyyy{120} \\
DP & \myboxx{2.33} & \myboxx{3.1} & \myboxx{3.48} & \myboxx{3.72} & \myboxx{3.89} & \myboxx{4.02} & \myboxx{4.13} & \myboxx{4.22} & \myboxx{4.3} & \myboxx{4.38} & \myboxx{4.45} & \myboxx{4.52} & \myboxx{4.58} & \myboxx{4.63} & \myboxx{4.69} & \myboxx{4.74} & \myboxx{4.79} & \myboxx{4.84} & \myboxx{4.88} & \myboxx{4.92} \\
BM & \myboxyyyyy{2.74} & \myboxyyyyy{4.7} & \myboxyyyyy{6.55} & \myboxyyyyy{8.35} & \myboxyyyyy{10.1} & \myboxyyyyy{11.9} & \myboxyyyyy{13.6} & \myboxyyyyy{15.4} & \myboxyyyyy{17.1} & \myboxyyyyy{18.9} & \myboxyyyyy{20.6} & \myboxyyyyy{22.3} & \myboxyyyyy{24.1} & \myboxyyyyy{25.8} & \myboxyyyyy{27.5} & \myboxyyyyy{29.3} & \myboxyyyyy{31.0} & \myboxyyyyy{32.7} & \myboxyyyyy{34.4} & \myboxyyyyy{36.2} \\
FM & \myboxyyyyy{3.86} & \myboxyyyyy{7.45} & \myboxyyyyy{11.0} & \myboxyyyyy{14.6} & \myboxyyyyy{18.1} & \myboxyyyyy{21.7} & \myboxyyyyy{25.2} & \myboxyyyyy{28.8} & \myboxyyyyy{32.3} & \myboxyyyyy{35.9} & \myboxyyyyy{39.4} & \myboxyyyyy{43.0} & \myboxyyyyy{46.5} & \myboxyyyyy{50.1} & \myboxyyyyy{53.6} & \myboxyyyyy{57.2} & \myboxyyyyy{60.7} & \myboxyyyyy{64.2} & \myboxyyyyy{67.8} & \myboxyyyyy{71.3} \\
2UCB & \myboxyyyyy{4.01} & \myboxyyyyy{6.92} & \myboxyyyyy{9.31} & \myboxyyyyy{11.4} & \myboxyyyyy{13.2} & \myboxyyyyy{14.8} & \myboxyyyyy{16.3} & \myboxyyyyy{17.6} & \myboxyyyyy{18.9} & \myboxyyyyy{20.0} & \myboxyyyyy{21.1} & \myboxyyyyy{22.1} & \myboxyyyyy{23.1} & \myboxyyyyy{24.0} & \myboxyyyyy{24.9} & \myboxyyyyy{25.7} & \myboxyyyyy{26.5} & \myboxyyyyy{27.3} & \myboxyyyyy{28.0} & \myboxyyyyy{28.7} \\
1UCB & \myboxyyyyy{3.44} & \myboxyyyyy{5.56} & \myboxyyyyy{7.18} & \myboxyyyyy{8.49} & \myboxyyyyy{9.61} & \myboxyyyyy{10.6} & \myboxyyyyy{11.5} & \myboxyyyyy{12.2} & \myboxyyyyy{12.9} & \myboxyyyyy{13.6} & \myboxyyyyy{14.2} & \myboxyyyyy{14.8} & \myboxyyyyy{15.3} & \myboxyyyyy{15.8} & \myboxyyyyy{16.2} & \myboxyyyyy{16.7} & \myboxyyyyy{17.1} & \myboxyyyyy{17.5} & \myboxyyyyy{17.9} & \myboxyyyyy{18.2} \\
0.18UCB & \myboxy{2.48} & \myboxy{3.39} & \myboxy{3.88} & \myboxy{4.21} & \myboxy{4.46} & \myboxy{4.65} & \myboxy{4.8} & \myboxy{4.93} & \myboxy{5.05} & \myboxy{5.15} & \myboxy{5.26} & \myboxy{5.35} & \myboxy{5.43} & \myboxy{5.51} & \myboxy{5.58} & \myboxy{5.64} & \myboxy{5.7} & \myboxy{5.75} & \myboxy{5.8} & \myboxy{5.85} \\
BKG & \myboxx{2.31} & \myboxy{3.13} & \myboxy{3.72} & \myboxy{4.23} & \myboxy{4.72} & \myboxyyy{5.18} & \myboxyyy{5.63} & \myboxyyy{6.08} & \myboxyyy{6.51} & \myboxyyy{6.95} & \myboxyyy{7.37} & \myboxyyy{7.8} & \myboxyyy{8.22} & \myboxyyyyy{8.64} & \myboxyyyyy{9.06} & \myboxyyyyy{9.48} & \myboxyyyyy{9.89} & \myboxyyyyy{10.3} & \myboxyyyyy{10.7} & \myboxyyyyy{11.1} \\
BLFF & \myboxyyyyy{4.94} & \myboxyyyyy{9.93} & \myboxyyyyy{14.9} & \myboxyyyyy{19.9} & \myboxyyyyy{24.9} & \myboxyyyyy{29.9} & \myboxyyyyy{34.9} & \myboxyyyyy{39.9} & \myboxyyyyy{44.9} & \myboxyyyyy{49.9} & \myboxyyyyy{54.9} & \myboxyyyyy{59.9} & \myboxyyyyy{64.9} & \myboxyyyyy{69.9} & \myboxyyyyy{74.9} & \myboxyyyyy{79.9} & \myboxyyyyy{84.9} & \myboxyyyyy{89.9} & \myboxyyyyy{94.9} & \myboxyyyyy{99.9} \\
BMSF & \myboxyyyyy{4.22} & \myboxyyyyy{8.37} & \myboxyyyyy{12.5} & \myboxyyyyy{16.7} & \myboxyyyyy{20.8} & \myboxyyyyy{25.0} & \myboxyyyyy{29.1} & \myboxyyyyy{33.3} & \myboxyyyyy{37.4} & \myboxyyyyy{41.6} & \myboxyyyyy{45.8} & \myboxyyyyy{49.9} & \myboxyyyyy{54.1} & \myboxyyyyy{58.2} & \myboxyyyyy{62.4} & \myboxyyyyy{66.5} & \myboxyyyyy{70.7} & \myboxyyyyy{74.8} & \myboxyyyyy{79.0} & \myboxyyyyy{83.1} \\
BGDF & \myboxyyy{2.66} & \myboxyyyyy{3.96} & \myboxyyyyy{4.95} & \myboxyyyyy{5.78} & \myboxyyyyy{6.51} & \myboxyyyyy{7.17} & \myboxyyyyy{7.78} & \myboxyyyyy{8.35} & \myboxyyyyy{8.88} & \myboxyyyyy{9.38} & \myboxyyyyy{9.86} & \myboxyyyyy{10.3} & \myboxyyyyy{10.8} & \myboxyyyyy{11.2} & \myboxyyyyy{11.6} & \myboxyyyyy{12.0} & \myboxyyyyy{12.4} & \myboxyyyyy{12.7} & \myboxyyyyy{13.1} & \myboxyyyyy{13.4} \\
BLFF+BM & \myboxyyy{2.55} & \myboxyyy{3.62} & \myboxyyy{4.29} & \myboxyyy{4.76} & \myboxyyy{5.1} & \myboxyyy{5.39} & \myboxyyy{5.62} & \myboxyyy{5.82} & \myboxyyy{5.99} & \myboxyyy{6.15} & \myboxyyy{6.29} & \myboxyyy{6.42} & \myboxyyy{6.54} & \myboxyyy{6.65} & \myboxyyy{6.77} & \myboxyyy{6.86} & \myboxyyy{6.96} & \myboxy{7.06} & \myboxy{7.16} & \myboxy{7.26} \\
BLFF+2UCB & \myboxyyyyy{4.01} & \myboxyyyyy{6.92} & \myboxyyyyy{9.31} & \myboxyyyyy{11.4} & \myboxyyyyy{13.2} & \myboxyyyyy{14.8} & \myboxyyyyy{16.3} & \myboxyyyyy{17.6} & \myboxyyyyy{18.9} & \myboxyyyyy{20.0} & \myboxyyyyy{21.1} & \myboxyyyyy{22.1} & \myboxyyyyy{23.1} & \myboxyyyyy{24.0} & \myboxyyyyy{24.9} & \myboxyyyyy{25.7} & \myboxyyyyy{26.5} & \myboxyyyyy{27.3} & \myboxyyyyy{28.0} & \myboxyyyyy{28.7} \\
BLFF+1UCB & \myboxyyyyy{3.44} & \myboxyyyyy{5.56} & \myboxyyyyy{7.18} & \myboxyyyyy{8.49} & \myboxyyyyy{9.61} & \myboxyyyyy{10.6} & \myboxyyyyy{11.5} & \myboxyyyyy{12.2} & \myboxyyyyy{12.9} & \myboxyyyyy{13.6} & \myboxyyyyy{14.2} & \myboxyyyyy{14.8} & \myboxyyyyy{15.3} & \myboxyyyyy{15.8} & \myboxyyyyy{16.2} & \myboxyyyyy{16.7} & \myboxyyyyy{17.1} & \myboxyyyyy{17.5} & \myboxyyyyy{17.9} & \myboxyyyyy{18.2} \\
BLFF+0.18UCB & \myboxyyy{2.56} & \myboxy{3.48} & \myboxy{4.0} & \myboxy{4.35} & \myboxy{4.65} & \myboxy{4.88} & \myboxy{5.09} & \myboxy{5.29} & \myboxy{5.49} & \myboxy{5.63} & \myboxy{5.82} & \myboxy{5.96} & \myboxy{6.1} & \myboxy{6.23} & \myboxy{6.42} & \myboxy{6.5} & \myboxy{6.63} & \myboxy{6.77} & \myboxy{6.9} & \myboxy{7.03} \\
BLFF+BMSF & \myboxyyyyy{3.01} & \myboxyyyyy{4.72} & \myboxyyyyy{5.92} & \myboxyyyyy{6.86} & \myboxyyyyy{7.54} & \myboxyyyyy{8.27} & \myboxyyyyy{8.7} & \myboxyyyyy{9.17} & \myboxyyyyy{9.59} & \myboxyyyyy{9.95} & \myboxyyyyy{10.3} & \myboxyyyyy{10.7} & \myboxyyyyy{10.9} & \myboxyyyyy{11.2} & \myboxyyyyy{11.4} & \myboxyyyyy{11.7} & \myboxyyyyy{11.8} & \myboxyyyyy{12.1} & \myboxyyyyy{12.1} & \myboxyyyyy{12.3} \\
1:1+BMSF & \myboxyyyyy{3.32} & \myboxyyyyy{5.42} & \myboxyyyyy{7.51} & \myboxyyyyy{8.86} & \myboxyyyyy{10.4} & \myboxyyyyy{11.7} & \myboxyyyyy{12.9} & \myboxyyyyy{13.9} & \myboxyyyyy{14.7} & \myboxyyyyy{15.5} & \myboxyyyyy{16.2} & \myboxyyyyy{16.7} & \myboxyyyyy{17.2} & \myboxyyyyy{18.3} & \myboxyyyyy{18.7} & \myboxyyyyy{18.9} & \myboxyyyyy{19.2} & \myboxyyyyy{20.2} & \myboxyyyyy{20.3} & \myboxyyyyy{21.2} \\
\bottomrule
\end{tabular}
\caption{The mean regret number of successes under success probabilities 0.3 and 0.5.}\label{table:performance35}
\end{sidewaystable}

\begin{sidewaystable}[tbp]
\footnotesize
\setlength{\tabcolsep}{5pt}
\centering
\begin{tabular}{l|cccccccccccccccccccc}
\toprule
Design $ \backslash $ Trial size & 60 & 120 & 180 & 240 & 300 & 360 & 420 & 480 & 540 & 600 & 660 & 720 & 780 & 840 & 900 & 960 & 1020 & 1080 & 1140 & 1200 \\
\midrule
1:1 & \myboxyyyyy{6.0} & \myboxyyyyy{12.0} & \myboxyyyyy{18.0} & \myboxyyyyy{24.0} & \myboxyyyyy{30.0} & \myboxyyyyy{36.0} & \myboxyyyyy{42.0} & \myboxyyyyy{48.0} & \myboxyyyyy{54.0} & \myboxyyyyy{60.0} & \myboxyyyyy{66.0} & \myboxyyyyy{72.0} & \myboxyyyyy{78.0} & \myboxyyyyy{84.0} & \myboxyyyyy{90.0} & \myboxyyyyy{96.0} & \myboxyyyyy{102} & \myboxyyyyy{108} & \myboxyyyyy{114} & \myboxyyyyy{120} \\
DP & \myboxx{2.02} & \myboxx{2.51} & \myboxx{2.77} & \myboxx{2.95} & \myboxx{3.1} & \myboxx{3.22} & \myboxx{3.33} & \myboxx{3.42} & \myboxx{3.51} & \myboxx{3.59} & \myboxx{3.66} & \myboxx{3.73} & \myboxx{3.8} & \myboxx{3.86} & \myboxx{3.91} & \myboxx{3.97} & \myboxx{4.02} & \myboxx{4.07} & \myboxx{4.11} & \myboxx{4.16} \\
BM & \myboxx{1.84} & \myboxx{2.37} & \myboxx{2.75} & \myboxy{3.07} & \myboxy{3.36} & \myboxy{3.64} & \myboxy{3.9} & \myboxy{4.16} & \myboxy{4.41} & \myboxy{4.66} & \myboxy{4.9} & \myboxy{5.14} & \myboxy{5.38} & \myboxy{5.61} & \myboxy{5.84} & \myboxy{6.07} & \myboxyyy{6.3} & \myboxyyy{6.53} & \myboxyyy{6.76} & \myboxyyy{6.98} \\
FM & \myboxyyyyy{3.05} & \myboxyyyyy{5.8} & \myboxyyyyy{8.54} & \myboxyyyyy{11.3} & \myboxyyyyy{14.0} & \myboxyyyyy{16.7} & \myboxyyyyy{19.5} & \myboxyyyyy{22.2} & \myboxyyyyy{25.0} & \myboxyyyyy{27.7} & \myboxyyyyy{30.4} & \myboxyyyyy{33.2} & \myboxyyyyy{35.9} & \myboxyyyyy{38.6} & \myboxyyyyy{41.4} & \myboxyyyyy{44.1} & \myboxyyyyy{46.8} & \myboxyyyyy{49.6} & \myboxyyyyy{52.3} & \myboxyyyyy{55.0} \\
2UCB & \myboxyyyyy{4.04} & \myboxyyyyy{6.96} & \myboxyyyyy{9.36} & \myboxyyyyy{11.4} & \myboxyyyyy{13.2} & \myboxyyyyy{14.9} & \myboxyyyyy{16.3} & \myboxyyyyy{17.7} & \myboxyyyyy{18.9} & \myboxyyyyy{20.1} & \myboxyyyyy{21.2} & \myboxyyyyy{22.2} & \myboxyyyyy{23.2} & \myboxyyyyy{24.1} & \myboxyyyyy{25.0} & \myboxyyyyy{25.8} & \myboxyyyyy{26.6} & \myboxyyyyy{27.4} & \myboxyyyyy{28.1} & \myboxyyyyy{28.8} \\
1UCB & \myboxyyyyy{3.46} & \myboxyyyyy{5.6} & \myboxyyyyy{7.23} & \myboxyyyyy{8.55} & \myboxyyyyy{9.68} & \myboxyyyyy{10.7} & \myboxyyyyy{11.5} & \myboxyyyyy{12.3} & \myboxyyyyy{13.0} & \myboxyyyyy{13.7} & \myboxyyyyy{14.3} & \myboxyyyyy{14.9} & \myboxyyyyy{15.4} & \myboxyyyyy{15.9} & \myboxyyyyy{16.4} & \myboxyyyyy{16.8} & \myboxyyyyy{17.2} & \myboxyyyyy{17.6} & \myboxyyyyy{18.0} & \myboxyyyyy{18.4} \\
0.18UCB & \myboxy{2.14} & \myboxy{2.77} & \myboxy{3.12} & \myboxy{3.38} & \myboxy{3.58} & \myboxy{3.75} & \myboxy{3.9} & \myboxy{4.03} & \myboxy{4.14} & \myboxy{4.24} & \myboxy{4.34} & \myboxy{4.43} & \myboxy{4.51} & \myboxy{4.58} & \myboxy{4.65} & \myboxy{4.72} & \myboxy{4.78} & \myboxy{4.84} & \myboxy{4.9} & \myboxy{4.95} \\
BKG & \myboxy{2.15} & \myboxy{2.52} & \myboxx{2.62} & \myboxx{2.67} & \myboxx{2.7} & \myboxx{2.72} & \myboxx{2.74} & \myboxx{2.76} & \myboxx{2.77} & \myboxx{2.78} & \myboxx{2.8} & \myboxx{2.81} & \myboxx{2.82} & \myboxx{2.83} & \myboxx{2.84} & \myboxx{2.85} & \myboxx{2.86} & \myboxx{2.87} & \myboxx{2.88} & \myboxx{2.89} \\
BLFF & \myboxyyyyy{5.2} & \myboxyyyyy{10.4} & \myboxyyyyy{15.7} & \myboxyyyyy{20.9} & \myboxyyyyy{26.2} & \myboxyyyyy{31.4} & \myboxyyyyy{36.7} & \myboxyyyyy{41.9} & \myboxyyyyy{47.2} & \myboxyyyyy{52.4} & \myboxyyyyy{57.7} & \myboxyyyyy{62.9} & \myboxyyyyy{68.2} & \myboxyyyyy{73.4} & \myboxyyyyy{78.7} & \myboxyyyyy{83.9} & \myboxyyyyy{89.2} & \myboxyyyyy{94.4} & \myboxyyyyy{99.7} & \myboxyyyyy{105} \\
BMSF & \myboxyyyyy{2.99} & \myboxyyyyy{5.75} & \myboxyyyyy{8.51} & \myboxyyyyy{11.3} & \myboxyyyyy{14.0} & \myboxyyyyy{16.8} & \myboxyyyyy{19.5} & \myboxyyyyy{22.3} & \myboxyyyyy{25.0} & \myboxyyyyy{27.8} & \myboxyyyyy{30.6} & \myboxyyyyy{33.3} & \myboxyyyyy{36.1} & \myboxyyyyy{38.8} & \myboxyyyyy{41.6} & \myboxyyyyy{44.3} & \myboxyyyyy{47.1} & \myboxyyyyy{49.9} & \myboxyyyyy{52.6} & \myboxyyyyy{55.4} \\
BGDF & \myboxyyyyy{4.16} & \myboxyyyyy{8.16} & \myboxyyyyy{12.2} & \myboxyyyyy{16.2} & \myboxyyyyy{20.2} & \myboxyyyyy{24.2} & \myboxyyyyy{28.2} & \myboxyyyyy{32.2} & \myboxyyyyy{36.2} & \myboxyyyyy{40.2} & \myboxyyyyy{44.2} & \myboxyyyyy{48.2} & \myboxyyyyy{52.2} & \myboxyyyyy{56.2} & \myboxyyyyy{60.2} & \myboxyyyyy{64.2} & \myboxyyyyy{68.2} & \myboxyyyyy{72.2} & \myboxyyyyy{76.2} & \myboxyyyyy{80.2} \\
BLFF+BM & \myboxy{2.13} & \myboxy{2.8} & \myboxy{3.22} & \myboxy{3.52} & \myboxy{3.8} & \myboxy{4.02} & \myboxy{4.23} & \myboxy{4.42} & \myboxy{4.63} & \myboxy{4.77} & \myboxy{4.97} & \myboxy{5.11} & \myboxy{5.26} & \myboxy{5.4} & \myboxy{5.61} & \myboxy{5.69} & \myboxy{5.84} & \myboxy{5.99} & \myboxy{6.14} & \myboxy{6.29} \\
BLFF+2UCB & \myboxyyyyy{4.04} & \myboxyyyyy{6.96} & \myboxyyyyy{9.36} & \myboxyyyyy{11.4} & \myboxyyyyy{13.2} & \myboxyyyyy{14.9} & \myboxyyyyy{16.3} & \myboxyyyyy{17.7} & \myboxyyyyy{18.9} & \myboxyyyyy{20.1} & \myboxyyyyy{21.2} & \myboxyyyyy{22.2} & \myboxyyyyy{23.2} & \myboxyyyyy{24.1} & \myboxyyyyy{25.0} & \myboxyyyyy{25.8} & \myboxyyyyy{26.6} & \myboxyyyyy{27.4} & \myboxyyyyy{28.1} & \myboxyyyyy{28.8} \\
BLFF+1UCB & \myboxyyyyy{3.46} & \myboxyyyyy{5.6} & \myboxyyyyy{7.23} & \myboxyyyyy{8.55} & \myboxyyyyy{9.68} & \myboxyyyyy{10.7} & \myboxyyyyy{11.5} & \myboxyyyyy{12.3} & \myboxyyyyy{13.0} & \myboxyyyyy{13.7} & \myboxyyyyy{14.3} & \myboxyyyyy{14.9} & \myboxyyyyy{15.4} & \myboxyyyyy{15.9} & \myboxyyyyy{16.4} & \myboxyyyyy{16.8} & \myboxyyyyy{17.2} & \myboxyyyyy{17.6} & \myboxyyyyy{18.0} & \myboxyyyyy{18.4} \\
BLFF+0.18UCB & \myboxyyy{2.23} & \myboxy{2.95} & \myboxy{3.4} & \myboxy{3.74} & \myboxy{4.04} & \myboxy{4.28} & \myboxy{4.51} & \myboxy{4.73} & \myboxy{4.95} & \myboxy{5.11} & \myboxy{5.31} & \myboxy{5.47} & \myboxy{5.62} & \myboxy{5.77} & \myboxy{5.97} & \myboxy{6.06} & \myboxy{6.21} & \myboxy{6.35} & \myboxy{6.5} & \myboxyyy{6.65} \\
BLFF+BMSF & \myboxyyyyy{2.66} & \myboxyyyyy{3.88} & \myboxyyyyy{4.72} & \myboxyyyyy{5.34} & \myboxyyyyy{5.79} & \myboxyyyyy{6.28} & \myboxyyyyy{6.62} & \myboxyyyyy{6.95} & \myboxyyyyy{7.26} & \myboxyyyyy{7.56} & \myboxyyyyy{7.84} & \myboxyyyyy{8.12} & \myboxyyyyy{8.31} & \myboxyyyyy{8.57} & \myboxyyyyy{8.84} & \myboxyyyyy{9.01} & \myboxyyyyy{9.18} & \myboxyyyyy{9.43} & \myboxyyyyy{9.61} & \myboxyyyyy{9.77} \\
1:1+BMSF & \myboxyyyyy{2.61} & \myboxyyyyy{4.03} & \myboxyyyyy{5.38} & \myboxyyyyy{6.12} & \myboxyyyyy{7.02} & \myboxyyyyy{7.72} & \myboxyyyyy{8.3} & \myboxyyyyy{8.78} & \myboxyyyyy{9.18} & \myboxyyyyy{9.5} & \myboxyyyyy{9.76} & \myboxyyyyy{9.89} & \myboxyyyyy{10.0} & \myboxyyyyy{10.7} & \myboxyyyyy{10.7} & \myboxyyyyy{10.7} & \myboxyyyyy{10.7} & \myboxyyyyy{11.2} & \myboxyyyyy{11.2} & \myboxyyyyy{11.6} \\
\bottomrule
\end{tabular}
\caption{The mean regret number of successes under success probabilities 0.1 and 0.3.}\label{table:performance13}
\end{sidewaystable}

\end{document}